\numberwithin{equation}{section}
\numberwithin{figure}{section}
\newlength{\lyxlistindent}      % custimizable indent
\theoremstyle{plain}
\newtheorem{thm}{\protect\theoremname}
  \theoremstyle{definition}
  \newtheorem{defn}[thm]{\protect\definitionname}
  \theoremstyle{plain}
  \newtheorem{lem}[thm]{\protect\lemmaname}
  \theoremstyle{remark}
  \newtheorem{rem}[thm]{\protect\remarkname}
  \theoremstyle{plain}
  \newtheorem{cor}[thm]{\protect\corollaryname}
  \theoremstyle{plain}
  \newtheorem{prop}[thm]{\protect\propositionname}
\setlist[enumerate]{leftmargin=*,label=(\roman*),align=left}
\def\1{\mathbb 1}
\newcommand{\rc}{\rcrho}
\newcommand{\pdd}[1]{\frac{\partial}{\partial{#1}}}
\newcommand{\pdddp}[1]{\frac{\partial^2 }{\partial{#1}^2}}
\newcommand{\dd}{\mathrm{d}}
\newcommand{\ve}{\varepsilon}
\newcommand{\der}[1]{\frac{\dd}{\dd #1}}
\newcommand{\difff}[1]{\frac{\dd^2}{\dd #1^2}}
\newcommand{\gsf}{{}^{\scriptscriptstyle \rho}\mathcal{GC}^\infty}
\newcommand{\ra}{\longrightarrow}
\newcommand{\field}[1]{\mathbb{#1}}
\newcommand{\R}{\field{R}} % reals
\newcommand{\N}{\field{N}} % naturals
\newcommand{\D}{\mathcal{D}}
\newcommand{\eps}{\varepsilon} % for the sake of brevity only
\renewcommand{\phi}{\varphi}
\newcommand{\diff}[1]{\,\hbox{\rm d}#1} % dt,dx,... for integrals
\newcommand{\Coo}{\mbox{\ensuremath{\mathcal{C}}}^{\infty}} % C infinity
\newcommand{\Gcinf}{\mathcal{G}\cinfty} % generalized smooth functions
\newcommand{\Rtil}{\widetilde \R} % real Colombeau generalized number
\newcommand{\gs}{\mathcal{G}^s} % special Colombeau algebra
\newcommand{\ns}{\mathcal{N}^s} % negligible nets for the special CA
\newcommand{\sint}[1]{\langle#1\rangle} % strong internal set
\newcommand{\Eball}{B^{{\scriptscriptstyle \text{\rm E}}}} % ordinary Euclidean ball
\newcommand{\RC}[1]{{}^{\scriptscriptstyle #1}\Rtil}
\newcommand{\rcrho}{\RC{\rho}}
\newcommand{\csp}[1]{{\text{\rm c}}({#1})}
\newcommand{\fcmp}{\Subset_{\text{f}}}
\newcommand{\ptind}{\displaystyle \mathop {\ldots\ldots\,}} % marks with smth over. Usage: \ptind^{...}
\newcommand{\then}{\quad \Longrightarrow \quad} % implication
\newcommand{\cinfty}{{\mathcal C}^\infty}
\newcommand{\comp}{\Subset}
\newcommand{\esm}{{\mathcal E}_M}
\newcommand{\Om}{\Omega}
\newcommand{\sse}{\subseteq}
\newcommand{\rti}{\RC{\rho}}
\newcommand{\no}[1]{| #1|}
  \providecommand{\corollaryname}{Corollary}
  \providecommand{\definitionname}{Definition}
  \providecommand{\lemmaname}{Lemma}
  \providecommand{\propositionname}{Proposition}
  \providecommand{\remarkname}{Remark}
\providecommand{\theoremname}{Theorem}
\begin{document}

\title[Calculus of variations for GF]{The classical theory of calculus of variations for generalized functions}

\author{Alexander Lecke \and Lorenzo Luperi Baglini \and Paolo Giordano}

\thanks{A.~Lecke has been supported by the uni:doc fellowship programme
of the University of Vienna}

\address{\textsc{University of Vienna, Austria}}

\email{alexander.lecke@univie.ac.at }

\thanks{L.~Luperi Baglini has been supported by grant M1876-N35 of the Austrian
Science Fund FWF}

\address{\textsc{University of Vienna, Austria}}

\email{lorenzo.luperi.baglini@univie.ac.at}

\thanks{P.\ Giordano has been supported by grants P25311-N25 and P25116-N25
of the Austrian Science Fund FWF}

\email{paolo.giordano@univie.ac.at }

\subjclass[2000]{49-XX, 46F-XX, 46F30, 53B20.}

\keywords{Calculus of variations, Schwartz distributions, generalized functions
for nonlinear analysis, low regular Riemannian geometry.}
\begin{abstract}
We present an extension of the classical theory of calculus of variations
to generalized functions. The framework is the category of generalized
smooth functions, which includes Schwartz distributions while sharing
many nonlinear properties with ordinary smooth functions.
We prove full connections between extremals and Euler-Lagrange equations,
classical necessary and sufficient conditions to have a minimizer,
the necessary Legendre condition, Jacobi's theorem on conjugate points
and Noether's theorem. We close with an application to low regularity Riemannian geometry.
\end{abstract}

\maketitle

\section{Introduction and motivations}

Singular problems in the calculus of variations have longly been studied
both in mathematics and in relevant applications (see, e.g., \cite{Gra30,Dav88,Tuc93,KKO:08}
and references therein). In this paper, we introduce an approach to
variational problems involving singularities that allows the extension
of the classical theory with very natural statements and proofs. We
are interested in extremizing functionals which are either distributional
themselves or whose set of extremals includes generalized functions.
Clearly, distribution theory, being a linear theory, has certain difficulties
when nonlinear problems are in play.

To overcome this type of problems, we are going to use the category
of generalized smooth functions, see \cite{GiKu16,GK15,Gi-Ku-St15,GKV}.
This theory seems to be a good candidate, since it is an extension
of classical distribution theory which allows to model nonlinear singular
problems, while at the same time sharing many nonlinear properties
with ordinary smooth functions, like the closure with respect to composition
and several non trivial classical theorems of the calculus. One could
describe generalized smooth functions as a methodological restoration
of Cauchy-Dirac's original conception of generalized function, see
\cite{Dir26,Lau89,Kat-Tal12}. In essence, the idea of Cauchy and
Dirac (but also of Poisson, Kirchhoff, Helmholtz, Kelvin and Heaviside)
was to view generalized functions as suitable types of smooth set-theoretical
maps obtained from ordinary smooth maps depending on suitable infinitesimal
or infinite parameters. For example, the density of a Cauchy-Lorentz
distribution with an infinitesimal scale parameter was used by Cauchy
to obtain classical properties which nowadays are attributed to the
Dirac delta, cf.~\cite{Kat-Tal12}.

In the present work, the foundation of the calculus of variations
is set for functionals defined by arbitrary generalized functions.
This in particular applies to any Schwartz distribution and any Colombeau
generalized function, and hence justifies the title of the present paper.

For example, during the last years, the study of low regularly Riemannian
and Lo\-ren\-tzian geometry was intensified and made a huge amount
of progress (cf.~\cite{KSS:14,KSSV:13,Milena,LY:06,LSS:13,SSLP:16}).
It was shown that the exponential map is a bi-Lipschitz homeomorphism
when metrics $g\in\mathcal{C}^{1,1}$ are considered, \cite{M:13,KSS:14},
or that Hawking's singularity theorem still holds when $g\in\mathcal{C}^{1,1}$,
see \cite{KSSV:13}. However, calculus of variations in the classical
sense may cease to hold when metrics with $\mathcal{C}^{1,1}$ regularity,
or below, are considered \cite{HE:73, PhDAlex}. This motivates the search 
for an alternative. In fact, if $p$, $q\in\R^{d}$ and $\Omega(p,q)$
denotes the set of all Lipschitz continuous curves connecting $p$
and $q$, the natural question about what curves $\gamma\in\Omega(p,q)$
realize the minimal $g$-length leads to the corresponding
geodesic equation, but the Jacobi equation is not rigorously
defined. To be more precise: The Riemannian curvature tensor exists
only as an $\text{L}_{\text{loc}}^{\infty}$ function on $\R^{d}$
and is evaluated along $\gamma$. However, the image $\text{Im}(\gamma)$
of $\gamma$ has Lebesgue-measure zero, if $d>1$. Thus we cannot
state the Jacobi equations properly.

In order to present a possible way out of the aforementioned problems,
the singular metric $g$ is embedded as a generalized smooth function.
In this way, the embedding $\iota(g)$ has derivatives of all orders,
valued in a suitable non-Archimedean ring\footnote{I.e.~a ring that contains infinitesimal and infinite numbers.}
$\rc\supseteq\R$, and behaves very closely to a standard smooth function.
We apply our extended calculus of variations to the generalized Riemannian
space $(\rc^{d},\iota(g))$, and sketch a way to translate the given
problem into the language of generalized smooth functions, solve it
there, and translate it back to the standard Riemannian space $(\R^{d},g)$.
Clearly, the process of embedding the singular metric $g$ using $\iota(g)$
introduces infinitesimal differences. This is typical in a non-Archimedean
setting, but the notion of \emph{standard part} comes to help: if
$x\in\rc$ is infinitely close to a standard real number $s$, i.e.~$|x-s|\le r$
for all $r\in\R_{>0}$, then the standard part of $x$ is exactly
$s$. We then show that (assuming that $(\R^{d},g)$ is geodesically
complete) the standard part of the minimal length in the sense of
generalized smooth functions is the minimal length in the classical
sense, and give a simple way to check if a given (classical) geodesic
is a minimizer of the length functional or not. In this way, the framework
of generalized smooth functions is presented as a method to solve
standard problems rather than a proposal to switch into a new setting.

The structure of the present paper is as follows. We start with an
introduction into the setting of generalized smooth functions and
give basic notions concerning generalized smooth functions and their
calculus that are needed for the calculus of variations (Section \ref{sec:Basic-notions}).
The paper is self-contained in the sense that it contains all the
statements required for the proofs of calculus of variations we are
going to present. If proofs of preliminaries are omitted, we clearly
give references to where they can be found. Therefore, to understand
this paper, only a basic knowledge of distribution theory is needed.\\
In Section \ref{sec:Preliminary-results}, we obtain some preliminary
lemmas regarding the calculus of variations with generalized smooth
functions. The first variation and the notion of critical point will
be defined and studied in section \ref{sec:First-variation}. We prove
the fundamental lemma of calculus of variations and the full connection
between critical points of a given functional and solutions of the
corresponding Euler-Lagrange equation. In section \ref{sec:second-variation},
we study the second variation and define the notion of local minimizer.
We also extend to generalized functions classical necessary and sufficient
conditions to have a minimizer, and we give a proof of the Legendre
condition. In Section \ref{sec:Jacobi-fields}, we introduce the notion
of Jacobi field and extend to generalized functions the definition
of conjugate points, so as to prove the corresponding Jacobi theorem.
In Section \ref{sec:Noether's-theorem}, we extend the classical Noether's
theorem. We close with an application to $\mathcal{C}^{1,1}$ Riemannian
geometry in Section \ref{sec:Application}.

Note that the work \cite{KKO:08} already established the calculus
of variations in the setting of Colombeau generalized functions by
using a comparable methodological approach. Indeed, generalized smooth
functions are related to Colombeau generalized functions, and one
could say that the former is a minimal extension of the latter so
as to get more general domains for generalized functions and hence
the closure with respect to composition and a better behaviour on
unbounded sets. However, there are some conceptual advantages in our
approach. 
\begin{enumerate}
\item Whereas generalized smooth functions are closed with respect to composition,
Colombeau generalized functions are not. This forces \cite{KKO:08}
to consider only functionals defined using compactly supported Colombeau
generalized functions, i.e.~functions assuming only finite values,
or tempered generalized function. 
\item The authors of \cite{KKO:08} are forced to consider the so called
compactly supported points $c(\Omega)$ (i.e.~finite points in $\Omega\subseteq\R^{n}$),
where the setting of generalized smooth functions gives the possibility
to consider more natural domains like the interval $[a,b]\subseteq\rcrho$.
This leads us to extend in a natural way the statements of classical
results of calculus of variations. Moreover, all our results still
hold when we take as $a$, $b\in\rc$ two infinite numbers such that
$a<b$, or as boundary points two unbounded points $p$, $q\in\rc^{d}$.
\item Furthermore, the theory of generalized smooth functions was developed
to be very user friendly, in the sense that one can avoid cumbersome
``$\varepsilon$-wise'' proofs quite often, whereas the proofs in
\cite{KKO:08} frequently use this technique. Thus, one could say
that some of the proofs based on generalized smooth functions are
more ``intrinsic'' and close to the classical proofs in a standard
smooth setting. This allows a smoother approach to this new framework. 
\item The setting of generalized smooth functions depends on a fixed
infinitesimal net $\left(\rho_{\eps}\right)_{\eps\in(0,1]}\downarrow0$,
whereas the Colombeau setting considers only $\rho_{\eps}=\eps$.
This added degree of freedom allows to solve singular differential
equations that are unsolvable in the classical Colombeau setting and
to prove a more general Jacobi theorem on conjugate points. 
\item In \cite{KKO:08} only the notion of global minimizer is defined,
whereas we define the notion of local minimizer as in \cite{GeFo00}
using a natural topology in space of generalized smooth curves.
\item We obtain more classical results like the Legendre condition, and
the classical results about Jacobi fields and conjugate points.
\item In addition, note that the Colombeau generalized functions can be
embedded into generalized smooth functions. Thus our approach is a
natural extension of \cite{KKO:08}.
\end{enumerate}

\section{\label{sec:Basic-notions}Basic notions}

\subsubsection*{The new ring of scalars}

In this work, $I$ denotes the interval $(0,1]\subseteq\R$ and we
will always use the variable $\eps$ for elements of $I$; we also
denote $\eps$-dependent nets $x\in\R^{I}$ simply by $(x_{\eps})$.
By $\N$ we denote the set of natural numbers, including zero.

We start by defining the new simple non-Archimedean ring of scalars
that extends the real field $\R$. The entire theory is constructive
to a high degree, e.g.~no ultrafilter or non-standard method is used.
For all the proofs of results in this section, see \cite{GKV,GiKu16,Gi-Ku-St15}. 
\begin{defn}
\label{def:RCGN}Let $\rho=(\rho_{\eps})\in\R^{I}$ be a net such
that $\lim_{\eps\to0}\rho_{\eps}=0^{+}$, then

\begin{enumerate}
\item $\mathcal{I}(\rho):=\left\{ (\rho_{\eps}^{-a})\mid a\in\R_{>0}\right\} $
is called the \emph{asymptotic gauge} generated by $\rho$. The net
$\rho$ is called a \emph{gauge}.
\item If $\mathcal{P}(\eps)$ is a property of $\eps\in I$, we use the
notation $\forall^{0}\eps:\,\mathcal{P}(\eps)$ to denote $\exists\eps_{0}\in I\,\forall\eps\in(0,\eps_{0}]:\,\mathcal{P}(\eps)$.
We can read $\forall^{0}\eps$ as \emph{for $\eps$ small}. 
\item We say that a net $(x_{\eps})\in\R^{I}$ \emph{is $\rho$-moderate},
and we write $(x_{\eps})\in\R_{\rho}$ if $\exists(J_{\eps})\in\mathcal{I}(\rho):\ x_{\eps}=O(J_{\eps})$
as $\eps\to0^{+}$. 
\item Let $(x_{\eps})$, $(y_{\eps})\in\R^{I}$, then we say that $(x_{\eps})\sim_{\rho}(y_{\eps})$
if $\forall(J_{\eps})\in\mathcal{I}(\rho):\ x_{\eps}=y_{\eps}+O(J_{\eps}^{-1})$
as $\eps\to0^{+}$. This is a congruence relation on the ring $\R_{{\scriptscriptstyle \rho}}$
of moderate nets with respect to pointwise operations, and we can
hence define 
\[
\RC{\rho}:=\R_{{\scriptscriptstyle \rho}}/\sim_{\rho},
\]
which we call \emph{Robinson-Colombeau ring of generalized numbers},
\cite{Rob73,Col92}. We denote the equivalence class $x\in\rti$ simply
by $x=:[x_{\eps}]:=[(x_{\eps})]_{\sim}\in\rti$. 
\end{enumerate}
\end{defn}
In the following, $\rho$ will always denote a net as in Def.~\ref{def:RCGN}.
The infinitesimal $\rho$ can be chosen depending on the class of
differential equations we need to solve for the generalized functions
we are going to introduce, see \cite{GiLu15}. For motivations concerning
the naturality of $\rti$, see \cite{Gi-Ku-St15}.

We can also define an order relation on $\RC{\rho}$ by saying that
$[x_{\eps}]\le[y_{\eps}]$ if there exists $(z_{\eps})\in\R^{I}$
such that $(z_{\eps})\sim_{\rho}0$ (we then say that $(z_{\eps})$
is \emph{$\rho$-negligible}) and $x_{\eps}\le y_{\eps}+z_{\eps}$
for $\eps$ small. Equivalently, we have that $x\le y$ if and only
if there exist representatives $[x_{\eps}]=x$ and $[y_{\eps}]=y$
such that $x_{\eps}\le y_{\eps}$ for all $\eps$. Clearly, $\RC{\rho}$
is a partially ordered ring. The usual real numbers $r\in\R$ are
embedded in $\RC{\rho}$ considering constant nets $[r]\in\RC{\rho}$.

Even if the order $\le$ is not total, we still have the possibility
to define the infimum $\min\left([x_{\eps}],[y_{\eps}]\right):=[\min(x_{\eps},y_{\eps})]$,
and analogously the supremum function $\max\left([x_{\eps}],[y_{\eps}]\right):=\left[\max(x_{\eps},y_{\eps})\right]$
and the absolute value $|[x_{\eps}]|:=[|x_{\eps}|]\in\RC{\rho}$.
Note, e.g., that $x\le z$ and $-x\le z$ imply $|x|\le z$. In the
following, we will also use the customary notation $\RC{\rho}^{*}$
for the set of invertible generalized numbers. Our notations for intervals
are: $[a,b]:=\{x\in\RC{\rho}\mid a\le x\le b\}$, $[a,b]_{\R}:=[a,b]\cap\R$,
and analogously for segments $[x,y]:=\left\{ x+r\cdot(y-x)\mid r\in[0,1]\right\} \subseteq\RC{\rho}^{n}$
and $[x,y]_{\R^{n}}=[x,y]\cap\R^{n}$. Finally, we write $x\approx y$
to denote that $|x-y|$ is an infinitesimal number, i.e.~$|x-y|\le r$
for all $r\in\R_{>0}$. This is equivalent to $\lim_{\eps\to0^{+}}|x_{\eps}-y_{\eps}|=0$
for all representatives $x=[x_{\eps}]$ and $y=[y_{\eps}]$.

\subsubsection*{Topologies on $\RC{\rho}^{n}$}

On the $\RC{\rho}$-module $\RC{\rho}^{n}$, we can consider the natural
extension of the Euclidean norm, i.e.~$|[x_{\eps}]|:=[|x_{\eps}|]\in\RC{\rho}$,
where $[x_{\eps}]\in\RC{\rho}^{n}$. Even if this generalized norm
takes values in $\RC{\rho}$, it shares several properties with usual
norms, like the triangular inequality or the property $|y\cdot x|=|y|\cdot|x|$.
It is therefore natural to consider on $\RC{\rho}^{n}$ topologies
generated by balls defined by this generalized norm and a set of radii
$\mathfrak{R}$: 
\begin{defn}
\label{def:setOfRadii}Let $\mathfrak{R}\in\left\{ \RC{\rho}_{\ge0}^{*},\R_{>0}\right\} $,
$c\in\RC{\rho}^{n}$ and $x$, $y\in\RC{\rho}$, then:

\begin{enumerate}
\item We write $x<_{\mathfrak{R}}y$ if $\exists r\in\mathfrak{R}:\ r\le y-x$. 
\item $B_{r}^{\mathfrak{R}}(c):=\left\{ x\in\RC{\rho}^{n}\mid\left|x-c\right|<_{\mathfrak{R}}r\right\} $
for each $r\in\mathfrak{R}$. 
\item $\Eball_{r}(c):=\{x\in\R^{n}\mid|x-c|<r\}$, for each $r\in\R_{>0}$,
denotes an ordinary Euclidean ball in $\R^{n}$. 
\end{enumerate}
\end{defn}
\noindent The relation $<_{\mathfrak{R}}$ has better topological
properties as compared to the usual strict order relation $a\le b$
and $a\ne b$ (that we will \emph{never} use) because for $\mathfrak{R}\in\left\{ \RC{\rho}_{\ge0}^{*},\R_{>0}\right\} $
the set of balls $\left\{ B_{r}^{\mathfrak{R}}(c)\mid r\in\mathfrak{R},\ c\in\RC{\rho}^{n}\right\} $
is a base for a topology on $\RC{\rho}^{n}$. The topology generated
in the case $\mathfrak{R}=\RC{\rho}_{\ge0}^{*}$ is called \emph{sharp
topology}, whereas the one with the set of radii $\mathfrak{R}=\R_{>0}$
is called \emph{Fermat topology}. We will call \emph{sharply open
set} any open set in the sharp topology, and \emph{large open set}
any open set in the Fermat topology; clearly, the latter is coarser
than the former. The existence of infinitesimal neighborhoods implies
that the sharp topology induces the discrete topology on $\R$. This
is a necessary result when one has to deal with continuous generalized
functions which have infinite derivatives. In fact, if $f'(x_{0})$
is infinite, we have $f(x)\approx f(x_{0})$ only for $x\approx x_{0}$
, see \cite{GiKu13,GK15}. With an innocuous abuse of language, we
write $x<y$ instead of $x<_{\RC{\rho}_{\ge0}^{*}}y$ and $x<_{\R}y$
instead of $x<_{\R_{>0}}y$. For example, $\RC{\rho}_{\ge0}^{*}=\RC{\rho}_{>0}$.
We will simply write $B_{r}(c)$ to denote an open ball in the sharp
topology and $B_{r}^{{\scriptscriptstyle \text{F}}}(c)$ for an open
ball in the Fermat topology. Also open intervals are defined using
the relation $<$, i.e.~$(a,b):=\{x\in\rc\mid a<x<b\}$.

The following result is useful to deal with positive and invertible
generalized numbers (cf.~\cite{GKOS}). 
\begin{lem}
\label{lem:mayer} Let $x\in\RC{\rho}$. Then the following are equivalent:

\begin{enumerate}
\item \label{enu:positiveInvertible}$x$ is invertible and $x\ge0$, i.e.~$x>0$. 
\item \label{enu:strictlyPositive}For each representative $(x_{\eps})\in\R_{\rho}$
of $x$ we have $\forall^{0}\eps:\ x_{\eps}>0$. 
\item \label{enu:greater-i_epsTom}For each representative $(x_{\eps})\in\R_{\rho}$
of $x$ we have $\exists m\in\N\,\forall^{0}\eps:\ x_{\eps}>\rho_{\eps}^{m}$ 
\end{enumerate}
\end{lem}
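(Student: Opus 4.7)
\medskip

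\noindent\textbf{Proof plan.} I would prove the lemma by a cycle (iii)$\Rightarrow$(ii)$\Rightarrow$(i)$\Rightarrow$(iii), as each step uses a different one of the ingredients in Definition~\ref{def:RCGN} (order, moderateness, invertibility). The implication (iii)$\Rightarrow$(ii) is immediate because $\rho_\eps>0$ for all $\eps\in I$, so $x_\eps>\rho_\eps^m>0$ for $\eps$ small.

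For (i)$\Rightarrow$(iii), I would fix a representative $(x_\eps)$ of $x$ and exploit invertibility. Since $x$ is invertible, there exists $y=[y_\eps]\in\RC\rho$ with $xy=1$, i.e.\ $x_\eps y_\eps=1+z_\eps$ with $(z_\eps)$ $\rho$-negligible. Moderateness of $(y_\eps)$ gives some $N\in\N$ with $|y_\eps|\le\rho_\eps^{-N}$ for $\eps$ small, hence $|x_\eps|\ge\tfrac12\rho_\eps^N$ for $\eps$ small. Next, $x\ge0$ provides a $\rho$-negligible net $(w_\eps)$ with $x_\eps\ge-w_\eps$ for $\eps$ small, and negligibility of $(w_\eps)$ yields $|w_\eps|\le\rho_\eps^{N+1}$ eventually. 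Combining both lower bounds (the second shows $x_\eps$ cannot be very negative, the first forces $|x_\eps|$ away from $0$) gives $x_\eps>\rho_\eps^{N+1}$ for $\eps$ small, which is (iii) with $m:=N+1$.

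The main obstacle is (ii)$\Rightarrow$(i), where the crucial point is to deduce a \emph{quantitative} lower bound from the merely qualitative positivity, using that (ii) holds for \emph{every} representative. I would argue by contradiction: suppose (ii) holds but $x$ is not invertible (equivalently, $(1/x_\eps)$ is not $\rho$-moderate, which in view of Definition~\ref{def:RCGN} means that for every $m\in\N$ there is a sequence $\eps_n\downarrow0$ with $x_{\eps_n}<\rho_{\eps_n}^n$). Thin out this sequence so that $\eps_n<\eps_{n-1}/2$, and define a new net
\[
x'_\eps:=\begin{cases}-x_\eps & \text{if }\eps=\eps_n\text{ for some }n,\\ x_\eps & \text{otherwise.}\end{cases}
\]
Since $|x'_\eps|=|x_\eps|$, the net $(x'_\eps)$ is $\rho$-moderate. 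Moreover $|x_\eps-x'_\eps|$ vanishes off the sequence and equals $2x_{\eps_n}<2\rho_{\eps_n}^n$ at $\eps=\eps_n$; for any fixed $a>0$ one has $2\rho_{\eps_n}^{n-a}\le 2$ as soon as $n\ge a$, so $(x_\eps-x'_\eps)\sim_\rho 0$ and $(x'_\eps)$ is another representative of $x$. But $x'_{\eps_n}=-x_{\eps_n}<0$ for all $n$, contradicting (ii) applied to $(x'_\eps)$. Hence $x$ is invertible, and $x\ge0$ is trivial from (ii), giving (i). Finally, the remaining closure of the cycle is obtained either by observing that we have already proved (ii)$\Rightarrow$(i) and (i)$\Rightarrow$(iii), or directly by repeating the previous quantitative argument.
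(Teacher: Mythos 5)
The paper does not prove this lemma: it is recalled as a known preliminary with a citation to the literature (\cite{GKOS}), so there is no in-paper argument to compare against. Your proof is correct and self-contained, and it is essentially the standard argument for this characterization. The cycle (iii)$\Rightarrow$(ii)$\Rightarrow$(i)$\Rightarrow$(iii) is well chosen; the step (i)$\Rightarrow$(iii) correctly combines the moderate bound $|y_\eps|\le\rho_\eps^{-N}$ on the inverse with the one-sided negligible bound coming from $x\ge0$ to exclude the branch $x_\eps\le-\tfrac12\rho_\eps^{N}$; and the sign-flip representative in (ii)$\Rightarrow$(i) is exactly the right device, with the verification that $(x_\eps-x'_\eps)\sim_\rho 0$ carried out properly. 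One small point to tidy: the parenthetical ``for every $m\in\N$ there is a sequence $\eps_n\downarrow0$ with $x_{\eps_n}<\rho_{\eps_n}^{n}$'' mixes its quantifiers. Non-moderateness of $(1/x_\eps)$ gives, for each fixed $m$, a null sequence along which $0<x_\eps<\rho_\eps^{m}$; to get the single sequence with $x_{\eps_n}<\rho_{\eps_n}^{n}$ that your construction actually uses, you need a diagonal extraction (choose $\eps_n<\min(\eps_{n-1}/2,1/n)$ with $x_{\eps_n}<\rho_{\eps_n}^{n}$, using the $m=n$ instance at stage $n$). This is routine and does not affect the validity of the argument, but it should be stated explicitly.
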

We will also need the following result.
\begin{lem}
\label{lem:approxOfBoundaryPointsWithInterior}Let $a$, $b\in\rcrho$
such that $a<b$, then the interior $\text{\emph{int}}\left([a,b]\right)$
in the sharp topology is dense in $[a,b]$.
\end{lem}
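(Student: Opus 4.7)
The plan is to first make the interior of $[a,b]$ in the sharp topology explicit, and then, for each $x\in[a,b]$ and each $\delta\in\rcrho_{>0}$, to construct a concrete point $y\in\text{int}([a,b])$ with $|y-x|<\delta$. As a preliminary step I would establish the characterisation
\[
\text{int}([a,b])=\{y\in\rcrho\mid a<y,\ y<b\}=\bigcup_{t\in\rcrho_{>0}}[a+t,\,b-t].
\]
Both inclusions are routine: if $a+t\le y\le b-t$ with $t\in\rcrho_{>0}$, then $|z-y|<t$ forces $y-t\le z\le y+t$, hence $z\in[a,b]$ and so $B_t(y)\subseteq[a,b]$; conversely, if $B_r(y)\subseteq[a,b]$ with $r\in\rcrho_{>0}$, then testing against the points $y\pm r/2\in B_r(y)$ yields $a+r/2\le y\le b-r/2$.

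Given $x\in[a,b]$ and $\delta\in\rcrho_{>0}$, I would next fix the common scale $t:=\min\!\bigl(\delta/2,\,(b-a)/3\bigr)$, which lies in $\rcrho_{>0}$ by Lemma~\ref{lem:mayer} because both of its arguments do. Using the fact that $u\le v$ admits representatives with $u_\eps\le v_\eps$ for every $\eps$, I would select nets $(a_\eps)$, $(x_\eps)$, $(b_\eps)$, $(t_\eps)$ with $a_\eps\le x_\eps\le b_\eps$ for all $\eps$ and $3t_\eps\le b_\eps-a_\eps$ for $\eps$ small, and set
\[
y_\eps:=\max\bigl(a_\eps+t_\eps,\,\min(x_\eps,\,b_\eps-t_\eps)\bigr),\qquad y:=[y_\eps]\in\rcrho.
\]
Moderateness of $(y_\eps)$ is automatic since it is trapped between the moderate nets $(a_\eps+t_\eps)$ and $(b_\eps-t_\eps)$.

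The verification then splits into two short checks. First, $3t_\eps\le b_\eps-a_\eps$ yields $a_\eps+t_\eps\le b_\eps-t_\eps$, so the nested $\max$--$\min$ forces $a_\eps+t_\eps\le y_\eps\le b_\eps-t_\eps$; passing to classes one obtains $a+t\le y\le b-t$, which places $y$ in $\text{int}([a,b])$ by the displayed characterisation. Second, a pointwise case distinction on the position of $x_\eps$ relative to $a_\eps+t_\eps$ and $b_\eps-t_\eps$, combined with $a_\eps\le x_\eps\le b_\eps$, produces $|y_\eps-x_\eps|\le t_\eps$ in every case, whence $|y-x|\le t<\delta$.

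The only substantive obstacle I anticipate is that $\le$ on $\rcrho$ is merely a partial order, which blocks any intrinsic case split of the form ``$x$ is close to $a$'', ``$x$ is close to $b$'' or ``$x$ is away from both''. The device that defuses this is precisely the combined $\max$--$\min$ truncation carried out at the level of representatives: it commutes with the quotient by $\sim_\rho$ (as recorded just after Definition~\ref{def:RCGN}) and treats the three pointwise configurations simultaneously, without demanding any decision at the generalized-number level.
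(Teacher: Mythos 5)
Your proof is correct. Note that the paper itself does not prove this lemma: it appears among the preliminaries of Section~\ref{sec:Basic-notions}, whose proofs are deferred to the cited references, so there is no in-paper argument to compare against. Your route is the natural one: the inclusion $[a+t,b-t]\subseteq\text{int}([a,b])$ for $t\in\rcrho_{>0}$ (which is the only direction of your characterisation actually needed for density), followed by the $\eps$-wise $\max$--$\min$ truncation of a representative of $x$, correctly sidesteps the failure of totality of $\le$, and the three pointwise cases do give $|y_\eps-x_\eps|\le t_\eps$ once $a_\eps\le x_\eps\le b_\eps$ and $3t_\eps\le b_\eps-a_\eps$ hold. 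The only point worth tightening is the simultaneous choice of representatives: the equivalence recorded after Definition~\ref{def:RCGN} gives compatible representatives for one inequality at a time, so to realise $a_\eps\le x_\eps\le b_\eps$ and $3t_\eps\le b_\eps-a_\eps$ at once you should (as your closing paragraph implicitly suggests) replace $a_\eps$ by $\min(a_\eps,x_\eps)$, $b_\eps$ by $\max(b_\eps,x_\eps)$, and take the canonical representative $t_\eps=\min\bigl(\delta_\eps/2,(b_\eps-a_\eps)/3\bigr)$, which is legitimate because $\min$ and $\max$ in $\rcrho$ are computed representative-wise. With that made explicit the argument is complete.
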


\subsubsection*{Internal and strongly internal sets}

A natural way to obtain sharply open, closed and bounded sets in $\RC{\rho}^{n}$
is by using a net $(A_{\eps})$ of subsets $A_{\eps}\subseteq\R^{n}$.
We have two ways of extending the membership relation $x_{\eps}\in A_{\eps}$
to generalized points $[x_{\eps}]\in\RC{\rho}$: 
\begin{defn}
\label{def:internalStronglyInternal}Let $(A_{\eps})$ be a net of
subsets of $\R^{n}$, then

\begin{enumerate}
\item $[A_{\eps}]:=\left\{ [x_{\eps}]\in\RC{\rho}^{n}\mid\forall^{0}\eps:\,x_{\eps}\in A_{\eps}\right\} $
is called the \emph{internal set} generated by the net $(A_{\eps})$.
See \cite{ObVe08} for the introduction and an in-depth study of this
notion in the case $\rho_{\eps}=\eps$. 
\item Let $(x_{\eps})$ be a net of points of $\R^{n}$, then we say that
$x_{\eps}\in_{\eps}A_{\eps}$, and we read it as $(x_{\eps})$ \emph{strongly
belongs to $(A_{\eps})$}, if $\forall^{0}\eps:\ x_{\eps}\in A_{\eps}$
and if $(x'_{\eps})\sim_{\rho}(x_{\eps})$, then also $x'_{\eps}\in A_{\eps}$
for $\eps$ small. Moreover, we set $\sint{A_{\eps}}:=\left\{ [x_{\eps}]\in\RC{\rho}^{n}\mid x_{\eps}\in_{\eps}A_{\eps}\right\} $,
and we call it the \emph{strongly internal set} generated by the net
$(A_{\eps})$. 
\item Finally, we say that the internal set $K=[A_{\eps}]$ is \emph{sharply
bounded} if there exists $r\in\RC{\rho}_{>0}$ such that $K\subseteq B_{r}(0)$.
Analogously, a net $(A_{\eps})$ is \emph{sharply bounded} the internal
set $[A_{\eps}]$ is sharply bounded. 
\end{enumerate}
\end{defn}
\noindent Therefore, $x\in[A_{\eps}]$ if there exists a representative
$[x_{\eps}]=x$ such that $x_{\eps}\in A_{\eps}$ for $\eps$ small,
whereas this membership is independent from the chosen representative
in the case of strongly internal sets. Note explicitly that an internal
set generated by a constant net $A_{\eps}=A\subseteq\R^{n}$ is simply
denoted by $[A]$.

The following theorem shows that internal and strongly internal sets
have dual topological properties: 
\begin{thm}
\noindent \label{thm:strongMembershipAndDistanceComplement}For $\eps\in I$,
let $A_{\eps}\subseteq\R^{n}$ and let $x_{\eps}\in\R^{n}$. Then
we have

\begin{enumerate}
\item \label{enu:internalSetsDistance}$[x_{\eps}]\in[A_{\eps}]$ if and
only if $\forall q\in\R_{>0}\,\forall^{0}\eps:\ d(x_{\eps},A_{\eps})\le\rho_{\eps}^{q}$.
Therefore $[x_{\eps}]\in[A_{\eps}]$ if and only if $[d(x_{\eps},A_{\eps})]=0\in\RC{\rho}$. 
\item \label{enu:stronglyIntSetsDistance}$[x_{\eps}]\in\sint{A_{\eps}}$
if and only if $\exists q\in\R_{>0}\,\forall^{0}\eps:\ d(x_{\eps},A_{\eps}^{c})>\rho_{\eps}^{q}$,
where $A_{\eps}^{c}:=\R^{n}\setminus A_{\eps}$. Therefore, if $(d(x_{\eps},A_{\eps}^{c}))\in\R_{\rho}$,
then $[x_{\eps}]\in\sint{A_{\eps}}$ if and only if $[d(x_{\eps},A_{\eps}^{c})]>0$. 
\item \label{enu:internalAreClosed}$[A_{\eps}]$ is sharply closed and
$\sint{A_{\eps}}$ is sharply open. 
\item \label{enu:internalGeneratedByClosed}$[A_{\eps}]=\left[\text{\emph{cl}}\left(A_{\eps}\right)\right]$,
where $\text{\emph{cl}}\left(S\right)$ is the closure of $S\subseteq\R^{n}$.
On the other hand $\sint{A_{\eps}}=\sint{\text{\emph{int}\ensuremath{\left(A_{\eps}\right)}}}$,
where $\emph{int}\left(S\right)$ is the interior of $S\subseteq\R^{n}$. 
\end{enumerate}
\end{thm}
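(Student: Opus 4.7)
The plan is to establish parts (i) and (ii) by characterizing each notion of membership through the $\rho$-asymptotic size of the distances $d(x_\eps, A_\eps)$ and $d(x_\eps, A_\eps^c)$; then (iii) drops out almost immediately from (i)--(ii), and (iv) reduces to (i)--(ii) via the classical identities $d(\cdot, A_\eps) = d(\cdot, \text{cl}(A_\eps))$ and $d(\cdot, A_\eps^c) = d(\cdot, (\text{int}(A_\eps))^c)$.

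For (i), the ``only if'' direction is a triangle inequality: if $(x'_\eps) \sim_\rho (x_\eps)$ is a representative with $x'_\eps \in A_\eps$ for $\eps$ small, then $d(x_\eps, A_\eps) \le |x_\eps - x'_\eps| \le \rho_\eps^q$ eventually, for every $q \in \R_{>0}$, by the definition of $\sim_\rho$. The ``if'' direction is the core diagonal stitching: for each $n \in \N$ fix $\eps_n \downarrow 0$ with $d(x_\eps, A_\eps) \le \rho_\eps^n$ for $\eps \le \eps_n$, and pick $y^{(n)}_\eps \in A_\eps$ with $|x_\eps - y^{(n)}_\eps| \le 2\rho_\eps^n$; then setting $y_\eps := y^{(n)}_\eps$ on $\eps \in (\eps_{n+1}, \eps_n]$ yields a net with $y_\eps \in A_\eps$ eventually and $(y_\eps) \sim_\rho (x_\eps)$, so $[x_\eps] = [y_\eps] \in [A_\eps]$. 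For (ii), the ``if'' direction is again a triangle inequality: if $d(x_\eps, A_\eps^c) > \rho_\eps^q$ eventually and $(x'_\eps) \sim_\rho (x_\eps)$, then $|x_\eps - x'_\eps| \le \rho_\eps^{q+1}$ eventually, hence $d(x'_\eps, A_\eps^c) > \rho_\eps^q/2 > 0$ and so $x'_\eps \in A_\eps$ for $\eps$ small. The ``only if'' direction is the contrapositive: if no such $q$ exists, the same stitching trick produces $(x'_\eps) \sim_\rho (x_\eps)$ with $x'_\eps \in A_\eps^c$ along an infinite subnet, contradicting $x_\eps \in_\eps A_\eps$.

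Part (iii) then follows directly. Openness of $\sint{A_\eps}$: given $[x_\eps] \in \sint{A_\eps}$, take by (ii) an exponent $q$ with $d(x_\eps, A_\eps^c) > \rho_\eps^q$ eventually; the radius $r := [\rho_\eps^{q+1}]$ is invertible and strictly positive in $\rc$ by Lemma \ref{lem:mayer}, and applying (ii) at any $[y_\eps] \in B_r([x_\eps])$ via the triangle inequality yields $B_r([x_\eps]) \subseteq \sint{A_\eps}$. Closedness of $[A_\eps]$: if $[y_\eps]$ lies in the sharp closure of $[A_\eps]$, for each $n \in \N$ pick $[a^{(n)}_\eps] \in [A_\eps]$ at sharp distance $\le [\rho_\eps^n]$ from $[y_\eps]$, and apply the diagonal stitching of (i) to extract a representative of $[y_\eps]$ valued in $A_\eps$ eventually. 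For (iv), $d(x_\eps, A_\eps) = d(x_\eps, \text{cl}(A_\eps))$ combined with (i) gives $[A_\eps] = [\text{cl}(A_\eps)]$, and $(\text{int}(A_\eps))^c = \text{cl}(A_\eps^c)$ gives $d(x_\eps, A_\eps^c) = d(x_\eps, (\text{int}(A_\eps))^c)$, which combined with (ii) gives $\sint{A_\eps} = \sint{\text{int}(A_\eps)}$.

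The hard part is the diagonal stitching used in (i) and (ii) (and reused for closedness in (iii)): one must arrange the thresholds $\eps_n \downarrow 0$ so that piecing approximants together on $\N$-indexed strata $(\eps_{n+1}, \eps_n]$ produces a net satisfying $(y_\eps) \sim_\rho (x_\eps)$ for \emph{every} real exponent $q \in \R_{>0}$, not only the integer ones. Secondary points to address are the case $A_\eps = \emptyset$ for some $\eps$ (handled by the convention $d(\cdot, \emptyset) = +\infty$, which forces those $\eps$ out of the relevant tail) and the strict positivity of $[\rho_\eps^{q+1}] \in \rc$, both standard and provided by Lemma \ref{lem:mayer}.
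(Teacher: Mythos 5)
The paper does not prove this theorem: it is quoted in Section~\ref{sec:Basic-notions} with the blanket reference ``for all the proofs of results in this section, see \cite{GKV,GiKu16,Gi-Ku-St15}'' (and \cite{ObVe08} for internal sets), so there is no in-paper argument to compare against. Your proof is correct and is essentially the standard one from those references: the ``only if'' of (i) and the ``if'' of (ii) are triangle inequalities, the converses are the diagonal stitching over strata $(\eps_{n+1},\eps_n]$ (which, as you note, covers all real exponents $q$ because $\rho_\eps^n\le\rho_\eps^q$ once $n\ge q$ and $\rho_\eps<1$), and (iii), (iv) reduce to (i)--(ii) via $d(\cdot,S)=d(\cdot,\mathrm{cl}(S))$ and $(\mathrm{int}(A))^c=\mathrm{cl}(A^c)$. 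You also correctly flag the only delicate points (the $A_\eps=\emptyset$ convention, strict positivity of $[\rho_\eps^{q+1}]$ via Lemma~\ref{lem:mayer}); the two ``Therefore'' clauses are then immediate from the main equivalences together with Lemma~\ref{lem:mayer}, so nothing essential is missing.
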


\subsubsection*{Generalized smooth functions and their calculus}

Using the ring $\rti$, it is easy to consider a Gaussian with an
infinitesimal standard deviation. If we denote this probability density
by $f(x,\sigma)$, and if we set $\sigma=[\sigma_{\eps}]\in\RC{\rho}_{>0}$,
where $\sigma\approx0$, we obtain the net of smooth functions $(f(-,\sigma_{\eps}))_{\eps\in I}$.
This is the basic idea we are going to develop in the following 
\begin{defn}
\label{def:netDefMap}Let $X\subseteq\RC{\rho}^{n}$ and $Y\subseteq\RC{\rho}^{d}$
be arbitrary subsets of generalized points. Then we say that 
\[
f:X\longrightarrow Y\text{ is a \emph{generalized smooth function}}
\]
if there exists a net $f_{\eps}\in\cinfty(\Omega_{\eps},\R^{d})$
defining $f$ in the sense that $X\subseteq\langle\Omega_{\eps}\rangle$,
$f([x_{\eps}])=[f_{\eps}(x_{\eps})]\in Y$ and $(\partial^{\alpha}f_{\eps}(x_{\eps}))\in\R_{{\scriptscriptstyle \rho}}^{d}$
for all $x=[x_{\eps}]\in X$ and all $\alpha\in\N^{n}$. The space
of generalized smooth functions (GSF) from $X$ to $Y$ is denoted
by $\gsf(X,Y)$. 
\end{defn}
Let us note explicitly that this definition states minimal logical
conditions to obtain a set-theoretical map from $X$ into $Y$ and
defined by a net of smooth functions. In particular, the following
Thm.~\ref{thm:propGSF} states that the equality $f([x_{\eps}])=[f_{\eps}(x_{\eps})]$
is meaningful, i.e.~that we have independence from the representatives
for all derivatives $[x_{\eps}]\in X\mapsto[\partial^{\alpha}f_{\eps}(x_{\eps})]\in\RC{\rho}^{d}$,
$\alpha\in\N^{n}$. 
\begin{thm}
\label{thm:propGSF}Let $X\subseteq\RC{\rho}^{n}$ and $Y\subseteq\RC{\rho}^{d}$
be arbitrary subsets of generalized points. Let $f_{\eps}\in\cinfty(\Omega_{\eps},\R^{d})$
be a net of smooth functions that defines a generalized smooth map
of the type $X\longrightarrow Y$, then

\begin{enumerate}
\item $\forall\alpha\in\N^{n}\,\forall(x_{\eps}),(x'_{\eps})\in\R_{\rho}^{n}:\ [x_{\eps}]=[x'_{\eps}]\in X\ \Rightarrow\ (\partial^{\alpha}u_{\eps}(x_{\eps}))\sim_{\rho}(\partial^{\alpha}u_{\eps}(x'_{\eps}))$. 
\item \label{enu:modOnEpsDepBall}$\forall[x_{\eps}]\in X\,\forall\alpha\in\N^{n}\,\exists q\in\R_{>0}\,\forall^{0}\eps:\ \sup_{y\in\Eball_{\eps^{q}}(x_{\eps})}\left|\partial^{\alpha}u_{\eps}(y)\right|\le\eps^{-q}$. 
\item \label{enu:locLipSharp}For all $\alpha\in\N^{n}$, the GSF $g:[x_{\eps}]\in X\mapsto[\partial^{\alpha}f_{\eps}(x_{\eps})]\in\Rtil^{d}$
is locally Lipschitz in the sharp topology, i.e.~each $x\in X$ possesses
a sharp neighborhood $U$ such that $|g(x)-g(y)|\le L|x-y|$ for all
$x$, $y\in U$ and some $L\in\RC{\rho}$. 
\item \label{enu:GSF-cont}Each $f\in\gsf(X,Y)$ is continuous with respect
to the sharp topologies induced on $X$, $Y$. 
\item \label{enu:suffCondFermatCont}Assume that the GSF $f$ is locally
Lipschitz in the Fermat topology and that its Lipschitz constants
are always finite: $L\in\R$. Then $f$ is continuous in the Fermat
topology. 
\item \label{enu:globallyDefNet}$f:X\longrightarrow Y$ is a GSF if and
only if there exists a net $v_{\eps}\in\cinfty(\R^{n},\R^{d})$ defining
a generalized smooth map of type $X\longrightarrow Y$ such that $f=[v_{\eps}(-)]|_{X}$. 
\item \label{enu:category}Subsets $S\subseteq\RC{\rho}^{s}$ with the trace
of the sharp topology, and generalized smooth maps as arrows form
a subcategory of the category of topological spaces. We will call
this category $\gsf$, the \emph{category of GSF}. 
\end{enumerate}
\end{thm}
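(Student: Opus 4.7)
The plan is to prove the seven items in the order (ii), (i), (iii), (iv), (v), (vi), (vii), with (ii) playing the role of the technical heart from which everything else cascades. Statement (ii) upgrades the pointwise moderateness built into Definition~\ref{def:netDefMap} to a \emph{uniform} moderate bound on a small Euclidean ball $\Eball_{\eps^q}(x_\eps)$ around any chosen representative.

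For (ii) I would argue by contradiction. If the bound fails at some $x=[x_\eps]\in X$ and some multi-index $\alpha$, then one can extract sequences $q_k\nearrow\infty$, $\eps_k\searrow 0$, and $y_{\eps_k}\in\Eball_{\eps_k^{q_k}}(x_{\eps_k})$ with $|\partial^\alpha f_{\eps_k}(y_{\eps_k})|>\eps_k^{-q_k}$. The core of the argument is to stitch the $y_{\eps_k}$ onto the rest of the net $(x_\eps)$ on suitably thin windows around each $\eps_k$, producing a new net $(x'_\eps)$ that is still $\rho$-moderate and still $\rho$-equivalent to $(x_\eps)$, so $[x'_\eps]=[x_\eps]\in X$, yet $(\partial^\alpha f_\eps(x'_\eps))$ ceases to be $\rho$-moderate. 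This contradicts the defining property of $f$, which requires moderateness of $\partial^\alpha f_\eps$ at every representative of every point of $X$.

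With (ii) available, (i) follows by the integral mean value theorem applied component-wise to the smooth net:
\begin{equation*}
\partial^\alpha f_\eps(x_\eps)-\partial^\alpha f_\eps(x'_\eps)=\int_0^1\nabla\partial^\alpha f_\eps\bigl(x'_\eps+t(x_\eps-x'_\eps)\bigr)\cdot(x_\eps-x'_\eps)\,\dd t,
\end{equation*}
since $(x_\eps-x'_\eps)$ is $\rho$-negligible, the segment eventually lies inside $\Eball_{\eps^q}(x_\eps)$, and the gradient there is bounded by $\eps^{-q}$ via (ii), so the right-hand side is $\rho$-negligible. The same mean value estimate on a sharp ball around a base point of $X$ yields (iii) with Lipschitz constant $L=[\eps^{-q}]\in\rc$; then (iv) is immediate from (iii), and (v) follows because the extra hypothesis $L\in\R$ is exactly what promotes sharp continuity to Fermat continuity. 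For (vi) I would paste cut-off copies of $f_\eps$ using a locally finite Euclidean cover of $\R^n$ and invoke (ii) to control the derivatives of the cut-offs, arranging that every $x=[x_\eps]\in X$ falls in the unmodified region for $\eps$ small. Part (vii) is then categorical bookkeeping: identities are defined by constant nets, composition of GSF is defined by composition of defining nets whose moderateness follows from the chain rule together with (i) and (ii), and continuity of morphisms is (iv).

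The main obstacle is clearly the contradictory construction in (ii): the bad points $y_{\eps_k}$ must be grafted onto $(x_\eps)$ in a way that preserves both moderateness and $\rho$-equivalence, yet simultaneously destroys moderateness of $(\partial^\alpha f_\eps(x'_\eps))$. This requires a careful diagonal calibration between the $\eps$-scale of the ball radii and the $\rho$-scale of the equivalence relation, and is precisely where the two gauges play distinct and essential roles.
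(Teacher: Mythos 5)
The paper does not actually prove this theorem: it is recalled in Section~\ref{sec:Basic-notions}, whose results are all deferred to \cite{GKV,GiKu16,Gi-Ku-St15}. Your proposal essentially reconstructs the proof given in those sources, with the same architecture ((ii) proved by grafting a ``bad'' subsequence onto a representative, then (i), (iii)--(v) by the mean value theorem, (vi) by cut-offs, (vii) by bookkeeping), so the overall strategy is the right one.

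The genuine gap is that the one step you identify as ``the main obstacle'' --- the diagonal calibration in (ii) --- is exactly the step you leave open, and as you have set it up it does not close for a general gauge. If the ball radii are $\eps^{q_k}$ (as in the literal statement) and you graft $y_{\eps_k}\in\Eball_{\eps_k^{q_k}}(x_{\eps_k})$ onto $(x_\eps)$, you need $|x'_{\eps_k}-x_{\eps_k}|\le\eps_k^{q_k}$ to be $O(\rho_{\eps_k}^{a})$ for \emph{every} $a>0$ in order to conclude $(x'_\eps)\sim_\rho(x_\eps)$ and hence $[x'_\eps]=[x_\eps]\in X$. For gauges with $\rho_\eps=o(\eps^m)$ for all $m$ (e.g.\ $\rho_\eps=e^{-1/\eps}$, which the paper explicitly allows and even advertises), this fails: $\eps_k^{q_k}$ is not dominated by $\rho_{\eps_k}^{a}$, the grafted net represents a \emph{different} point of $\rc^n$ possibly outside $X$, and no contradiction with Def.~\ref{def:netDefMap} is obtained. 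The same mismatch infects your deduction of (i) and (iii): $\rho$-negligibility of $x_\eps-x'_\eps$ means $O(\rho_\eps^{a})$ for all $a$, which does not place $x'_\eps$ inside $\Eball_{\eps^{q}}(x_\eps)$, and sharp balls have radii of order $\rho_\eps^{q}$, so a derivative bound on a ball of radius $\eps^{q}$ is too small to cover them when $\rho_\eps\gg\eps$. The repair is to run the whole of (ii) in the gauge $\rho$ --- radius $\rho_\eps^{q}$, bound $\rho_\eps^{-q}$ --- which is how the cited sources state it (there $\rho_\eps=\eps$, so the two coincide); with that reading your contradiction argument and the subsequent mean-value estimates all go through. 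Two smaller points: in (vi) the derivatives of the cut-offs are not controlled by (ii) and need not be moderate anywhere off $X$; what matters is choosing the cut-off $\equiv1$ on an $\eps$-dependent neighbourhood $\Eball_{\rho_\eps^{q}}(x_\eps)$ of each representative, so that $v_\eps$ and $f_\eps$ share all derivatives there. And in (vii) one normally invokes (vi) first, so that the composite net $g_\eps\circ f_\eps$ is globally defined before the chain rule is applied.
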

The differential calculus for GSF can be introduced showing existence
and uniqueness of another GSF serving as incremental ratio.
\begin{thm}[Fermat-Reyes theorem for GSF]
\noindent \label{thm:FR-forGSF} Let $U\subseteq\RC{\rho}^{n}$ be
a sharply open set, let $v=[v_{\eps}]\in\RC{\rho}^{n}$, and let $f\in\gsf(U,\RC{\rho})$
be a generalized smooth map generated by the net of smooth functions
$f_{\eps}\in\cinfty(\Omega_{\eps},\R)$. Then

\begin{enumerate}
\item \label{enu:existenceRatio}There exists a sharp neighborhood $T$
of $U\times\{0\}$ and a generalized smooth map $r\in\gsf(T,\RC{\rho})$,
called the \emph{generalized incremental ratio} of $f$ \emph{along}
$v$, such that
\[
\forall(x,h)\in T:\ f(x+hv)=f(x)+h\cdot r(x,h).
\]
\item \label{enu:uniquenessRatio}Any two generalized incremental ratios
coincide on a sharp neighborhood of $U\times\{0\}$.
\item \label{enu:defDer}We have $r(x,0)=\left[\frac{\partial f_{\eps}}{\partial v_{\eps}}(x_{\eps})\right]$
for every $x\in U$ and we can thus define $Df(x)\cdot v:=\frac{\partial f}{\partial v}(x):=r(x,0)$,
so that $\frac{\partial f}{\partial v}\in\gsf(U,\RC{\rho})$. 
\end{enumerate}
\noindent If $U$ is a large open set, then an analogous statement
holds replacing sharp neighborhoods by large neighborhoods.
\end{thm}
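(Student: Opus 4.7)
The natural candidate for the incremental ratio is the classical integral
$$r_\eps(x_\eps, h_\eps) := \int_0^1 \frac{\partial f_\eps}{\partial v_\eps}(x_\eps + t h_\eps v_\eps)\,\dd t,$$
because then $h_\eps r_\eps(x_\eps,h_\eps) = f_\eps(x_\eps+h_\eps v_\eps) - f_\eps(x_\eps)$ by the fundamental theorem of calculus. The plan is to exhibit a sharp neighborhood $T$ of $U \times \{0\}$ on which this net defines a generalized smooth function into $\rcrho$, verify the equation on $T$, and then deduce the formula for $r(x,0)$ together with the uniqueness statement.

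For the construction of $T$, since $U \subseteq \sint{\Omega_\eps}$, Theorem \ref{thm:strongMembershipAndDistanceComplement} provides for each $x = [x_\eps] \in U$ an exponent $q = q(x) > 0$ with $d(x_\eps, \Omega_\eps^c) > \rho_\eps^q$ for $\eps$ small. I set $\delta_x := [\rho_\eps^q/4] \in \rcrho_{>0}$ and $\eta_x := \delta_x/(1 + |v|) \in \rcrho_{>0}$, the latter being well-defined because $1 + |v| \ge 1$ is invertible by Lemma \ref{lem:mayer}. A triangle estimate then shows that for every $(y,h) \in B_{\delta_x}(x) \times B_{\eta_x}(0)$ and $t \in [0,1]$, the segment point $y_\eps + t h_\eps v_\eps$ lies in $\Eball_{\rho_\eps^q/2}(x_\eps) \subseteq \Omega_\eps$ for $\eps$ small. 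Taking $T := \bigcup_{x \in U} B_{\delta_x}(x) \times B_{\eta_x}(0)$ gives a sharp neighborhood of $U \times \{0\}$; the factor $1 + |v|$ in $\eta_x$ is essential because $v$ may have infinite magnitude.

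Next I verify that $r_\eps$ defines a GSF on $T$. Differentiating under the integral sign expresses each $\partial^\alpha r_\eps(x_\eps, h_\eps)$ as an integral, over $t \in [0,1]$, of derivatives $\partial^\beta f_\eps$ evaluated at $x_\eps + t h_\eps v_\eps$. Theorem \ref{thm:propGSF}(ii) supplies uniform moderation of such derivatives on small Euclidean balls around $x_\eps$, which combined with the boundedness of the parameter $t$ yields the required $\rho$-moderation of $r_\eps$ and of all its partial derivatives. Independence of representatives follows from the same estimates applied to a straight-line interpolation between two representatives. Passing the identity $h_\eps r_\eps(x_\eps,h_\eps) = f_\eps(x_\eps+h_\eps v_\eps) - f_\eps(x_\eps)$ to $\rcrho$ gives part (i), and evaluating the integrand at $h = 0$ immediately proves part (iii).

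For uniqueness, if $r, s \in \gsf(T, \rcrho)$ both satisfy (i), then $h\cdot(r-s)(x,h) = 0$ on $T$. Whenever $h \in \rcrho$ is invertible, cancellation yields $r(x,h) = s(x,h)$. Any $h_0$ in a sharp neighborhood of $0$ can be sharply approximated by invertible perturbations (e.g.\ by a sign-dependent addition of $\rho^k$ at the level of representatives), and the sharp continuity of $r$ and $s$ from Theorem \ref{thm:propGSF} then extends the equality to a sharp neighborhood of $U \times \{0\}$, giving (ii). The large-open case is handled by the same construction, with sharp balls replaced by Fermat balls of standard-real radius, available because $U$ is Fermat-open. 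The step I expect to be most delicate is this uniqueness argument, because the sharp density of invertibles near $0$ requires a careful $\eps$-wise construction for generalized numbers whose representatives may change sign as $\eps$ varies; the perturbation must be chosen so as to be consistent with the $\sim_\rho$ equivalence.
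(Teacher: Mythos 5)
The paper itself does not prove Theorem \ref{thm:FR-forGSF}: it is quoted as a preliminary with proofs deferred to \cite{GKV,GiKu16,Gi-Ku-St15,GiKu13}. Your overall strategy is exactly the one used in those references -- the Hadamard representation $r_{\eps}(x_{\eps},h_{\eps})=\int_{0}^{1}\frac{\partial f_{\eps}}{\partial v_{\eps}}(x_{\eps}+th_{\eps}v_{\eps})\,\dd t$, a ``thickening'' $T$ of $U\times\{0\}$ built from the strong-membership characterization of Thm.~\ref{thm:strongMembershipAndDistanceComplement}.\ref{enu:stronglyIntSetsDistance}, the evaluation at $h=0$ for part \ref{enu:defDer}, and uniqueness via the sharp density of invertible $h$ together with sharp continuity (the same density-of-invertibles argument the paper uses in Lem.~\ref{lem:componentWisePos}). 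Parts \ref{enu:existenceRatio} (modulo the point below), \ref{enu:uniquenessRatio} and \ref{enu:defDer} are argued correctly.

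There is, however, one genuine gap in the moderateness verification. You justify the $\rho$-moderateness of $\partial^{\alpha}r_{\eps}(y_{\eps},h_{\eps})$ by Thm.~\ref{thm:propGSF}.\ref{enu:modOnEpsDepBall}, but that statement only gives, for each point and each multi-index $\beta$, \emph{some} exponent $q_{\beta}$ with a uniform bound on the ball of radius $\rho_{\eps}^{q_{\beta}}$; nothing prevents $q_{\beta}$ from being much larger than the exponent $q$ you used to define $\delta_{x}$ and $\eta_{x}$, in which case the segment $\{y_{\eps}+th_{\eps}v_{\eps}\mid t\in[0,1]_{\R}\}$, whose diameter is of order $\rho_{\eps}^{q}$, escapes the ball $\Eball_{\rho_{\eps}^{q_{\beta}}}(y_{\eps})$ on which the bound is available. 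Since the domain $T$ must be fixed before $\alpha$ is quantified, you cannot shrink it afterwards. The standard repair uses machinery the paper provides for exactly this purpose: shrink $\delta_{x}$ so that the whole segment $[y,y+hv]$ lies in $U$ (possible since $U$ is sharply open), observe that $[y,y+hv]$ is the image of $[0,1]\fcmp\rcrho$ under a GSF and hence functionally compact in $U$ (Thm.~\ref{thm:image}, Cor.~\ref{cor:intervalsFunctCmpt}), and apply the extreme value theorem \ref{thm:extremeValues} to the GSF $\partial^{\beta}f\in\gsf(U,\rcrho)$ to get a single moderate bound $\max_{z\in[y,y+hv]}|\partial^{\beta}f(z)|\in\rcrho$ valid uniformly in $t$. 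With that substitution the moderateness of $r_{\eps}$ and all its derivatives, and likewise the independence of representatives, go through as you describe.
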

Note that this result permits to consider the partial derivative of
$f$ with respect to an arbitrary generalized vector $v\in\RC{\rho}^{n}$
which can be, e.g., infinitesimal or infinite. Using this result,
we can also define subsequent differentials $D^{j}f(x)$ as $j-$multilinear
maps, and we set $D^{j}f(x)\cdot h^{j}:=D^{j}f(x)(h,\ptind^{j},h)$.
The set of all the $j-$multilinear maps $\left(\rti^{n}\right)^{j}\ra\rti^{d}$
over the ring $\rti$ will be denoted by $L^{j}(\rti^{n},\rti^{d})$.
For $A=[A_{\eps}(-)]\in L^{j}(\rti^{n},\rti^{d})$, we set $\no{A}:=[\no{A_{\eps}}]$,
the generalized number defined by the operator norms of the multilinear
maps $A_{\eps}\in L^{j}(\R^{n},\R^{d})$.

The following result follows from the analogous properties for the
nets of smooth functions defining $f$ and $g$.
\begin{thm}
\label{thm:rulesDer} Let $U\subseteq\rcrho^{n}$ be an open subset
in the sharp topology, let $v\in\rcrho^{n}$ and $f$, $g:U\longrightarrow\rcrho$
be generalized smooth maps. Then

\begin{enumerate}
\item $\frac{\partial(f+g)}{\partial v}=\frac{\partial f}{\partial v}+\frac{\partial g}{\partial v}$
\item $\frac{\partial(r\cdot f)}{\partial v}=r\cdot\frac{\partial f}{\partial v}\quad\forall r\in\rcrho$
\item $\frac{\partial(f\cdot g)}{\partial v}=\frac{\partial f}{\partial v}\cdot g+f\cdot\frac{\partial g}{\partial v}$
\item For each $x\in U$, the map $\diff{f}(x).v:=\frac{\partial f}{\partial v}(x)\in\rcrho$
is $\rcrho$-linear in $v\in\rcrho^{n}$
\item Let $U\subseteq\rcrho^{n}$ and $V\subseteq\rcrho^{d}$ be open subsets
in the sharp topology and $g\in{}^{\rho}\Gcinf(V,U)$, $f\in{}^{\rho}\Gcinf(U,\rcrho)$
be generalized smooth maps. Then for all $x\in V$ and all $v\in\rcrho^{d}$,
we have $\frac{\partial\left(f\circ g\right)}{\partial v}(x)=\diff{f}\left(g(x)\right).\frac{\partial g}{\partial v}(x)$.
\end{enumerate}
\end{thm}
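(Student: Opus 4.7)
My plan is to reduce each identity in Thm.~\ref{thm:rulesDer} to the corresponding classical smoothness calculus rule applied $\eps$-wise, exploiting the explicit formula for the directional derivative provided by Thm.~\ref{thm:FR-forGSF}(iii), namely $\frac{\partial f}{\partial v}(x)=\left[\frac{\partial f_\eps}{\partial v_\eps}(x_\eps)\right]$. The strategy in each case is: pick representatives $f_\eps$, $g_\eps$ defining $f$, $g$ as GSF; form the corresponding net of smooth functions ($f_\eps+g_\eps$, $r_\eps f_\eps$, $f_\eps g_\eps$, $f_\eps\circ g_\eps$); verify that this net still defines a GSF of the right type (so that Fermat-Reyes applies); and then invoke the classical Euclidean calculus rule on representatives to get the desired equality of equivalence classes in $\rcrho$.

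Concretely, for (i)-(iii) the sum, scalar-multiple and product nets are obviously smooth with moderate derivatives by the $\rho$-moderateness of the defining nets of $f$ and $g$ and the classical Leibniz rule; Thm.~\ref{thm:FR-forGSF}(iii) then gives, for $x=[x_\eps]\in U$,
\[
\frac{\partial(f\cdot g)}{\partial v}(x)=\left[\frac{\partial f_\eps}{\partial v_\eps}(x_\eps)g_\eps(x_\eps)+f_\eps(x_\eps)\frac{\partial g_\eps}{\partial v_\eps}(x_\eps)\right]=\frac{\partial f}{\partial v}(x)\cdot g(x)+f(x)\cdot\frac{\partial g}{\partial v}(x),
\]
and similarly for the sum and scalar rules. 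For (iv), the $\rcrho$-linearity of $v\mapsto\mathrm{d} f(x)\cdot v$ follows because, for representatives $v=[v_\eps]$, $w=[w_\eps]$ and $r=[r_\eps]\in\rcrho$, the classical directional derivative satisfies $\frac{\partial f_\eps}{\partial(rv_\eps+w_\eps)}(x_\eps)=r_\eps\frac{\partial f_\eps}{\partial v_\eps}(x_\eps)+\frac{\partial f_\eps}{\partial w_\eps}(x_\eps)$ pointwise; passing to equivalence classes yields the $\rcrho$-linear combination.

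For (v), the chain rule, I would again fix representatives $g_\eps$, $f_\eps$ and use the classical identity $\frac{\partial(f_\eps\circ g_\eps)}{\partial v_\eps}(x_\eps)=\mathrm{d} f_\eps(g_\eps(x_\eps))\cdot\frac{\partial g_\eps}{\partial v_\eps}(x_\eps)$; Thm.~\ref{thm:FR-forGSF}(iii) applied to $f\circ g$ then gives the stated equality, once we know that $f\circ g\in\gsf(V,\rcrho)$. The main obstacle is therefore this well-definedness: one must check that the net $(f_\eps\circ g_\eps)$ is a defining net for a GSF of type $V\to\rcrho$, i.e.\ that all iterated derivatives $\partial^\alpha(f_\eps\circ g_\eps)(x_\eps)$ are $\rho$-moderate and representative-independent at each $x\in V$. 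This is handled by Faà di Bruno's formula, which expresses such derivatives as polynomials in the derivatives $\partial^\beta f_\eps(g_\eps(x_\eps))$ and $\partial^\gamma g_\eps(x_\eps)$; moderateness of all these quantities (which holds because $g$ and $f$ are GSF, and $g(x)\in U$ so that $f$ is evaluated at a point of its domain) together with the ring properties of $\R_\rho$ then ensure the composite net is moderate, completing the proof.
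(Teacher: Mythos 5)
Your proposal is correct and follows essentially the same route as the paper, which justifies Thm.~\ref{thm:rulesDer} in one line by saying it ``follows from the analogous properties for the nets of smooth functions defining $f$ and $g$'' (with full details deferred to the cited references): you reduce each rule $\eps$-wise via the representation $\frac{\partial f}{\partial v}(x)=\left[\frac{\partial f_{\eps}}{\partial v_{\eps}}(x_{\eps})\right]$ from Thm.~\ref{thm:FR-forGSF} and check moderateness of the sum, product and composite nets. Your extra care about the well-definedness of $f\circ g$ (Fa\`a di Bruno plus the fact that $g(x)\in U$ forces $g_{\eps}(x_{\eps})$ to lie in the domain of $f_{\eps}$ for $\eps$ small) is exactly the content of the closure-under-composition statement in Thm.~\ref{thm:propGSF}, so nothing is missing.
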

We also have a generalization of the Taylor formula:
\begin{thm}
\label{thm:Taylor}Let $f\in{}^{\rho}\Gcinf(U,\rcrho)$ be a generalized
smooth function defined in the sharply open set $U\subseteq\rcrho^{n}$.
Let $a$, $x\in\rcrho^{n}$ such that the line segment $[a,x]\subseteq U$.
Then, for all $n\in\N$ we have
\begin{equation}
\exists\xi\in[a,x]:\ f(x)=\sum_{j=0}^{n}\frac{D^{j}f(a)}{j!}\cdot(x-a)^{j}+\frac{D^{n+1}f(\xi)}{(n+1)!}\cdot(x-a)^{n+1}.\label{eq:lagrange}
\end{equation}
If we further assume that all the $n$ components $(x-a)_{k}\in\rcrho$
of $x-a\in\rcrho^{n}$ are invertible, then there exists $\rho\in\rcrho_{>0}$,
$\rho\le|x-a|$, such that 
\begin{equation}
\forall k\in B_{\rho}(0)\,\exists\xi\in[a-k,a+k]:\ f(a+k)=\sum_{j=0}^{n}\frac{D^{j}f(a)}{j!}\cdot k^{j}+\frac{D^{n+1}f(\xi)}{(n+1)!}\cdot k^{n+1}\label{eq:LagrangeInfRest}
\end{equation}
\begin{equation}
\frac{D^{n+1}f(\xi)}{(n+1)!}\cdot k^{n+1}\approx0.\label{eq:integralInfRest}
\end{equation}
\end{thm}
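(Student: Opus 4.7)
The strategy is to apply the classical Lagrange-form Taylor theorem $\eps$-wise to a defining net, along the one-parameter family of line segments joining $a_\eps$ to $x_\eps$, and then reassemble the intermediate points into a generalized point in $[a,x]$. The second half of the statement additionally exploits sharp openness of $U$ together with the local Lipschitz (hence local boundedness) property of GSF derivatives from Theorem \ref{thm:propGSF}(iii).

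For \eqref{eq:lagrange}, I would invoke Theorem \ref{thm:propGSF}(vi) to pick a globally defined net $f_\eps \in \mathcal{C}^\infty(\R^n,\R)$ representing $f$, together with representatives $a_\eps$, $x_\eps$. For each $\eps$, apply classical Taylor with Lagrange remainder to the smooth one-variable function $g_\eps(t) := f_\eps\bigl(a_\eps + t(x_\eps - a_\eps)\bigr)$ on $[0,1]_\R$: this produces $\tau_\eps \in [0,1]_\R$ with
\[
g_\eps(1) = \sum_{j=0}^{n} \frac{g_\eps^{(j)}(0)}{j!} + \frac{g_\eps^{(n+1)}(\tau_\eps)}{(n+1)!}.
\]
The chain rule identifies $g_\eps^{(j)}(t) = D^j f_\eps\bigl(a_\eps + t(x_\eps-a_\eps)\bigr) \cdot (x_\eps - a_\eps)^j$. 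Setting $\xi_\eps := a_\eps + \tau_\eps(x_\eps - a_\eps)$, boundedness of $(\tau_\eps) \subseteq [0,1]_\R$ yields moderation of $(\xi_\eps)$, so $\xi := [\xi_\eps] \in [a,x]$. Passing to equivalence classes, with representative-independence guaranteed by Theorem \ref{thm:propGSF}, delivers \eqref{eq:lagrange}.

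For \eqref{eq:LagrangeInfRest}--\eqref{eq:integralInfRest}, use sharp openness of $U$ to pick $\delta \in \rcrho_{>0}$ with $B_\delta(a) \subseteq U$; by local Lipschitz continuity of $D^{n+1}f$ on $B_\delta(a)$ (Theorem \ref{thm:propGSF}(iii)), after possibly shrinking $\delta$ we obtain a uniform bound $|D^{n+1}f(y)| \le M$ for $y \in B_\delta(a)$ with $M \in \rcrho_{>0}$. Componentwise invertibility of $x-a$ forces $|x-a| \in \rcrho_{>0}$. Next, choose a positive infinitesimal $\sigma = [\rho_\eps^q] \in \rcrho_{>0}$ with $q$ large enough that $M\sigma^{n+1} \approx 0$ (admissible by the $\rho$-moderation bound on $M$, Definition \ref{def:RCGN}(iii)), and define the radius promised by the statement as $r := \min\bigl(\sigma,\, \delta,\, |x-a|\bigr) \in \rcrho_{>0}$. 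For any $k \in B_r(0)$, convexity of the sharp ball $B_r(a)$ places $[a-k, a+k] \subseteq B_\delta(a) \subseteq U$, so applying part (1) with $x$ replaced by $a+k$ produces $\xi \in [a-k, a+k]$ satisfying \eqref{eq:LagrangeInfRest}. The operator-norm estimate
\[
\left|\tfrac{D^{n+1}f(\xi)}{(n+1)!} \cdot k^{n+1}\right| \le \tfrac{M}{(n+1)!}\,|k|^{n+1} \le \tfrac{M\, r^{n+1}}{(n+1)!} \approx 0
\]
then delivers \eqref{eq:integralInfRest}.

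The main obstacle is the calibration of $r$: it must simultaneously be invertible, dominated by $\delta$ and $|x-a|$, and infinitesimal enough that $M\, r^{n+1}$ is beaten by every positive standard real — despite $M$ possibly being an infinite generalized number. Matching the exponent $q$ of $\sigma = [\rho_\eps^q]$ to the $\rho$-moderation bound on $M$ is precisely where the non-Archimedean arithmetic of $\rcrho$ enters essentially; every other step is a direct $\eps$-wise translation of the classical argument.
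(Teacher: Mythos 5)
The paper does not prove this theorem: it is stated in Section \ref{sec:Basic-notions} among the recalled preliminaries, with all proofs deferred to \cite{GKV,GiKu16,Gi-Ku-St15}. Your argument is correct and is exactly the expected one from those references -- the $\eps$-wise application of the classical Lagrange remainder along $g_\eps(t)=f_\eps(a_\eps+t(x_\eps-a_\eps))$ via a globally defined net (Thm.~\ref{thm:propGSF}), with moderateness of $D^{n+1}f_\eps(\xi_\eps)$ guaranteed by $\xi=[\xi_\eps]\in[a,x]\subseteq U$, and the calibration $r:=\min(\sigma,\delta,|x-a|)$ with $\sigma=[\rho_\eps^{q}]$ chosen against the moderateness exponent of $M$ is precisely the non-Archimedean step needed for \eqref{eq:integralInfRest}.
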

Formula \eqref{eq:lagrange} corresponds to a direct generalization of Taylor formulas for ordinary smooth functions with Lagrange remainder.
On the other hand, in \eqref{eq:LagrangeInfRest} and \eqref{eq:integralInfRest},
the possibility that the differential $D^{n+1}f$ may be infinite
at some point is considered, and the Taylor formulas are stated so
as to have infinitesimal remainder.

The following local inverse function theorem will be used in the proof
of Jacobi's theorem (see \cite{GiKu16} for a proof).
\begin{thm}
\label{thm:localIFTSharp}Let $X\sse\rti^{n}$, let $f\in\gsf(X,\rti^{n})$
and suppose that for some $x_{0}$ in the sharp interior of $X$,
$Df(x_{0})$ is invertible in $L(\rti^{n},\rti^{n})$. Then there
exists a sharp neighborhood $U\sse X$ of $x_{0}$ and a sharp neighborhood
$V$ of $f(x_{0})$ such that $f:U\to V$ is invertible and $f^{-1}\in\gsf(V,U)$. 
\end{thm}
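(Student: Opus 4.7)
The plan is to apply the classical $\cinfty$ inverse function theorem pointwise in $\eps$ to a globally defined net representing $f$, and then use the GSF moderateness estimates to show that the resulting local inverses assemble into a GSF on sharp neighborhoods of controlled size. By Theorem \ref{thm:propGSF}(\ref{enu:globallyDefNet}), fix $f_\eps \in \cinfty(\R^n, \R^n)$ defining $f$; write $x_0 = [x_{0,\eps}]$, $y_0 := f(x_0) = [f_\eps(x_{0,\eps})]$, and $A_\eps := Df_\eps(x_{0,\eps})$, so that $A = Df(x_0) = [A_\eps]$.

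First I would quantify the invertibility of $A$: since $A \in L(\rti^n,\rti^n)$ is invertible, $\det A \in \rti^{*}$, so by Lemma \ref{lem:mayer} there exists $m_1 \in \N$ with $|\det A_\eps| > \rho_\eps^{m_1}$ for $\eps$ small. Combining this with the moderate bound $\|A_\eps\| \le \rho_\eps^{-m_0}$ via Cramer's rule yields $\|A_\eps^{-1}\| \le \rho_\eps^{-m_2}$ for some $m_2 \in \N$. A standard quantitative form of the classical inverse function theorem then gives: whenever $r>0$ and $K := \sup_{\Eball_r(x_{0,\eps})} \|D^2 f_\eps\|$ satisfy $2 \|A_\eps^{-1}\| K r \le 1$, the map $f_\eps$ is a diffeomorphism from $\Eball_r(x_{0,\eps})$ onto an open set containing $\Eball_{r/(2\|A_\eps^{-1}\|)}(y_{0,\eps})$, with $\|Df_\eps(y)^{-1}\| \le 2\|A_\eps^{-1}\|$ throughout. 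Using Theorem \ref{thm:propGSF}(\ref{enu:modOnEpsDepBall}) applied to all second-order partials of $f_\eps$, I would pick $m_3\in\N$ large enough that $\sup_{\Eball_{\rho_\eps^{m_3}}(x_{0,\eps})} \|D^2 f_\eps\| \le \rho_\eps^{-m_3}$ for $\eps$ small, and large enough that $[\rho_\eps^{m_3}] \le R$, where $R \in \rti_{>0}$ is the radius of a sharp ball around $x_0$ contained in $X$. Setting $r_\eps := \min\bigl(\rho_\eps^{m_3}, \tfrac12 \rho_\eps^{m_2+m_3+1}\bigr)$, Lemma \ref{lem:mayer} gives that $r := [r_\eps]$ and $s := [r_\eps/(2\|A_\eps^{-1}\|)]$ are sharply positive, and $U := B_r(x_0) \subseteq X$, $V := B_s(y_0)$ are the desired sharp neighborhoods.

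With these radii fixed, let $g_\eps : \Eball_{r_\eps/(2\|A_\eps^{-1}\|)}(y_{0,\eps}) \to \Eball_{r_\eps}(x_{0,\eps})$ denote the smooth local inverse produced by the classical IFT, and set $f^{-1}(y) := [g_\eps(y_\eps)]$ for $y = [y_\eps] \in V$. Since $\|Dg_\eps\| \le 2\|A_\eps^{-1}\|$ is $\rho$-moderate, the mean value inequality shows that this definition is independent of the chosen representative of $y$ and yields the Lipschitz estimate $|f^{-1}(y) - f^{-1}(y')| \le 2\|A^{-1}\| \, |y - y'|$. The higher derivatives of $g_\eps$ can be obtained inductively by differentiating $f_\eps \circ g_\eps = \mathrm{id}$ via Fa\`a di Bruno, producing polynomial expressions in the derivatives of $f_\eps$ evaluated at $g_\eps$ and in the moderate inverse $Dg_\eps = (Df_\eps \circ g_\eps)^{-1}$; all ingredients are moderate, so $(\partial^\alpha g_\eps(y_\eps)) \in \R_\rho^n$ for every $\alpha$, and hence $f^{-1}\in\gsf(V,U)$. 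The identities $f\circ f^{-1} = \mathrm{id}_V$ and $f^{-1}\circ f = \mathrm{id}_U$ then hold pointwise in $\eps$ on the relevant Euclidean balls.

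The main obstacle is the quantitative step: converting the purely algebraic statement ``$A$ invertible in $L(\rti^n,\rti^n)$'' and the pointwise moderateness of the defining net into uniform, $\eps$-independent estimates on a \emph{sharp} neighborhood, so that the radii $r$ and $s$ produced by the classical IFT are not merely positive representatives but sharply invertible generalized numbers. Lemma \ref{lem:mayer} and Theorem \ref{thm:propGSF}(\ref{enu:modOnEpsDepBall}) are doing essentially all the work here: they supply the polynomial lower bound on $|\det A_\eps|$ and the polynomial upper bound on $\|D^2 f_\eps\|$ on a controlled ball that lock the constants in the quantitative IFT into the class of $\rho$-moderate data. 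Once these are in place, the remaining verifications (well-definedness of $f^{-1}$, moderateness of all its derivatives, and the two inverse identities) are routine.
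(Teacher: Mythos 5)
First, a caveat on the comparison: the paper does not actually prove Theorem \ref{thm:localIFTSharp} --- it is imported from \cite{GiKu16} (``see \cite{GiKu16} for a proof''), so there is no internal proof to measure your argument against. That said, your strategy is the natural one for this framework and is essentially sound: you convert the algebraic hypothesis ``$Df(x_{0})$ invertible in $L(\rti^{n},\rti^{n})$'' into the quantitative bound $\|A_{\eps}^{-1}\|\le\rho_{\eps}^{-m_{2}}$ via Lem.~\ref{lem:mayer} applied to $\det A_{\eps}$ together with Cramer's rule, use Thm.~\ref{thm:propGSF}.\ref{enu:modOnEpsDepBall} to control $\|D^{2}f_{\eps}\|$ on a ball of radius a fixed power of $\rho_{\eps}$, and then choose $r_{\eps}$ so that the smallness condition $2\|A_{\eps}^{-1}\|K_{\eps}r_{\eps}\le\rho_{\eps}\le1$ of the quantitative classical inverse function theorem holds uniformly for small $\eps$. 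The sharp positivity of $r=[r_{\eps}]$ and $s=[r_{\eps}/(2\|A_{\eps}^{-1}\|)]$, the representative-independence of $[g_{\eps}(y_{\eps})]$ via the moderate Lipschitz bound $\|Dg_{\eps}\|\le2\|A_{\eps}^{-1}\|$, and the moderateness of all higher derivatives of $g_{\eps}$ by differentiating $f_{\eps}\circ g_{\eps}=\mathrm{id}$ are all correctly handled.

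The one genuine flaw is in the final pairing of $U$ and $V$. With $U:=B_{r}(x_{0})$ and $V:=B_{s}(y_{0})$ you obtain that $f$ is injective on $U$ and that $f(U)\supseteq V$, but nothing forces $f(U)\subseteq V$: the classical theorem only gives a diffeomorphism of $\Eball_{r_{\eps}}(x_{0,\eps})$ onto an open set \emph{containing} $\Eball_{s_{\eps}}(y_{0,\eps})$. Consequently $f:U\to V$ is not a map into $V$, and the identity $f^{-1}\circ f=\mathrm{id}_{U}$ does not typecheck, since $f(x)$ may lie outside the domain of $f^{-1}$ for $x\in U$. The repair is one line: replace $U$ by $U\cap f^{-1}(V)$, which is sharply open by sharp continuity of $f$ (Thm.~\ref{thm:propGSF}.\ref{enu:GSF-cont}) and is still a sharp neighbourhood of $x_{0}$; surjectivity onto $V$ is then inherited from $g=[g_{\eps}(-)]$, whose values satisfy $|g(y)-x_{0}|<r$ sharply for $y\in V$. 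With that adjustment the argument is complete.
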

We can define right and left derivatives as e.g.~$f'(a):=f'_{+}(a):=\lim_{\substack{t\to a\\
a<t
}
}f'(t)$, which always exist if $f\in\gsf([a,b],\rc^{d})$. One dimensional
integral calculus of GSF is based on the following
\begin{thm}
\label{thm:existenceUniquenessPrimitives}Let $f\in{}^{\rho}\Gcinf([a,b],\rcrho)$
be a generalized smooth function defined in the interval $[a,b]\subseteq\rc$,
where $a<b$. Let $c\in[a,b]$. Then, there exists one and only one
generalized smooth function $F\in{}^{\rho}\Gcinf([a,b],\rcrho)$ such
that $F(c)=0$ and $F'(x)=f(x)$ for all $x\in[a,b]$. Moreover, if
$f$ is defined by the net $f_{\eps}\in\Coo(\R,\R)$ and $c=[c_{\eps}]$,
then $F(x)=\left[\int_{c_{\eps}}^{x_{\eps}}f_{\eps}(s)\diff{s}\right]$
for all $x=[x_{\eps}]\in[a,b]$.
\end{thm}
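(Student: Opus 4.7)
The plan is to prove existence by constructing $F$ directly via the net of pointwise primitives, and then to obtain uniqueness from Taylor's theorem.

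For existence, pick a globally defined net $f_\eps\in\Coo(\R,\R)$ for $f$ using Thm.~\ref{thm:propGSF}.\ref{enu:globallyDefNet}, fix a representative $c=[c_\eps]$, and set $F_\eps(x):=\int_{c_\eps}^{x}f_\eps(s)\,\dd s$. Classically $F_\eps\in\Coo(\R,\R)$ with $F_\eps'=f_\eps$, so the only non-trivial point is that the net $(F_\eps)$ defines a GSF of type $[a,b]\to\rcrho$. For derivatives of order $\alpha\ge 1$ this is immediate, since $\partial^\alpha F_\eps(x_\eps)=\partial^{\alpha-1}f_\eps(x_\eps)$ is $\rho$-moderate by hypothesis on $f$. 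For $\alpha=0$ I would estimate
\[
|F_\eps(x_\eps)|\le|x_\eps-c_\eps|\cdot\sup_{s\in J_\eps}|f_\eps(s)|,\qquad J_\eps:=[\min(c_\eps,x_\eps),\max(c_\eps,x_\eps)].
\]
Since $a\le c,x\le b$ in $\rcrho$, any point $\tilde s_\eps\in J_\eps$ realizing the (attained) supremum satisfies $a\le[\tilde s_\eps]\le b$, so $[\tilde s_\eps]\in[a,b]$ and hence $(f_\eps(\tilde s_\eps))\in\R_\rho$ because $f$ is a GSF. Combined with $\rho$-moderateness of $(x_\eps-c_\eps)$ (again because $x,c\in[a,b]$), this gives $(F_\eps(x_\eps))\in\R_\rho$.

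Independence from the representative of $x$ follows from the same bound applied to $x_\eps-x'_\eps$: if $[x_\eps]=[x'_\eps]$, then $(x_\eps-x'_\eps)$ is $\rho$-negligible, the supremum of $|f_\eps|$ on the segment between $x_\eps$ and $x'_\eps$ is $\rho$-moderate by the argument above, and the product $(x_\eps-x'_\eps)\cdot(\text{moderate})$ is $\rho$-negligible; independence for higher derivatives reduces to independence for $f$. Thus $F:=[F_\eps(-)]|_{[a,b]}\in\gsf([a,b],\rcrho)$. Evaluating on constant nets gives $F(c)=[\int_{c_\eps}^{c_\eps}f_\eps]=0$, and Thm.~\ref{thm:FR-forGSF}.\ref{enu:defDer} yields $F'(x)=[F_\eps'(x_\eps)]=[f_\eps(x_\eps)]=f(x)$. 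The explicit formula for $F(x)$ is then the very definition of $F_\eps$.

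For uniqueness, let $G\in\gsf([a,b],\rcrho)$ be another primitive with $G(c)=0$, and set $H:=F-G$, so that $H(c)=0$ and $H'\equiv 0$ on $[a,b]$. Given any $x\in[a,b]$, convexity of the order (if $a\le c,x\le b$, then the straight segment $[c,x]$ lies in $[a,b]$) allows us to apply the Lagrange form of Taylor's formula, Thm.~\ref{thm:Taylor}, at order $n=0$: there exists $\xi\in[c,x]$ with $H(x)=H(c)+H'(\xi)\cdot(x-c)=0$. Hence $F=G$ on $[a,b]$.

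The main obstacle is the moderateness verification for $F_\eps(x_\eps)$: one must avoid assuming any global bound on $f_\eps$ and instead extract $\rho$-moderateness of the sup of $|f_\eps|$ over the $\eps$-varying segment $J_\eps$ by packaging the supremum as the value of $f$ at a legitimate element of $[a,b]$. Everything else (smoothness of $F_\eps$, independence of representatives for higher derivatives, uniqueness) reduces to either classical calculus or to the already-established Thm.~\ref{thm:FR-forGSF} and Thm.~\ref{thm:Taylor}.
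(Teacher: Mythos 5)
Your proof is correct. Note, however, that the paper does not prove Theorem \ref{thm:existenceUniquenessPrimitives} at all: it is recalled in Section \ref{sec:Basic-notions} as a known preliminary, with the proof deferred to the cited references, so there is no in-paper argument to compare against. Your construction is the expected one, and you correctly identify and resolve the only delicate point, namely the $\rho$-moderateness of $(F_\eps(x_\eps))$ for $\alpha=0$: packaging $\sup_{s\in J_\eps}|f_\eps(s)|$ as $|f_\eps(\tilde s_\eps)|$ for a maximizer $\tilde s_\eps$ with $[\tilde s_\eps]\in[a,b]$, and then invoking moderateness of the defining net of $f$ at points of $[a,b]$, is exactly right; the same device also gives independence of representatives. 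The only imprecision is that Thm.~\ref{thm:FR-forGSF} and Thm.~\ref{thm:Taylor} are stated for sharply open domains, while you apply them on the closed interval $[a,b]$; this is harmless (work on the sharp interior, which is dense in $[a,b]$ by Lem.~\ref{lem:approxOfBoundaryPointsWithInterior}, and use the paper's one-sided derivatives and sharp continuity at the endpoints, or argue $\eps$-wise with the classical mean value theorem as your Taylor step implicitly does), but it is worth a sentence.
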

\noindent We can thus define
\begin{defn}
\label{def:integral}Under the assumptions of Theorem \ref{thm:existenceUniquenessPrimitives},
we denote by $\int_{c}^{(-)}f:=\int_{c}^{(-)}f(s)\diff{s}\in{}^{\rho}\Gcinf([a,b],\rcrho)$
the unique generalized smooth function such that:

\begin{enumerate}
\item $\int_{c}^{c}f=0$
\item $\left(\int_{u}^{(-)}f\right)'(x)=\frac{\diff{}}{\diff{x}}\int_{u}^{x}f(s)\diff{s}=f(x)$
for all $x\in[a,b]$.
\end{enumerate}
\end{defn}
\noindent All the classical rules of integral calculus hold in this
setting:
\begin{thm}
\label{thm:intRules}Let $f\in{}^{\rho}\Gcinf(U,\rcrho)$ and $g\in{}^{\rho}\Gcinf(V,\rcrho)$
be generalized smooth functions defined on sharply open domains in
$\rcrho$. Let $a$, $b\in\rcrho$ with $a<b$ and $c$, $d\in[a,b]\subseteq U\cap V$,
then

\begin{enumerate}
\item \label{enu:additivityFunction}$\int_{c}^{d}\left(f+g\right)=\int_{c}^{d}f+\int_{c}^{d}g$ 
\item \label{enu:homog}$\int_{c}^{d}\lambda f=\lambda\int_{c}^{d}f\quad\forall\lambda\in\rcrho$ 
\item \label{enu:additivityDomain}$\int_{c}^{d}f=\int_{c}^{e}f+\int_{e}^{d}f$
for all $e\in[a,b]$ 
\item \label{enu:chageOfExtremes}$\int_{c}^{d}f=-\int_{d}^{c}f$ 
\item \label{enu:foundamental}$\int_{c}^{d}f'=f(d)-f(c)$ 
\item \label{enu:intByParts}$\int_{c}^{d}f'\cdot g=\left[f\cdot g\right]_{c}^{d}-\int_{c}^{d}f\cdot g'$ 
\end{enumerate}
\end{thm}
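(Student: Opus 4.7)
The plan is to reduce each of the six identities to its classical $\eps$-wise counterpart via the explicit representative formula in Thm.~\ref{thm:existenceUniquenessPrimitives}, which gives $\int_c^d f = \bigl[\int_{c_\eps}^{d_\eps} f_\eps(s)\,\dd s\bigr]$ whenever $f$ is defined by a net $f_\eps\in\cinfty(\R,\R)$ and $c=[c_\eps]$, $d=[d_\eps]$. By Thm.~\ref{thm:propGSF}\ref{enu:globallyDefNet}, both $f$ and $g$ can be assumed to be defined by nets on all of $\R$, so I do not need to worry about the joint domain $U\cap V$ or the interval $[a,b]$ at the $\eps$-level.

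Granted this setup, items \ref{enu:additivityFunction}--\ref{enu:chageOfExtremes} are immediate. For each, I pick representatives of $c$, $d$, $e$, $\lambda$, use the fact that a defining net for $f+g$ is $(f_\eps+g_\eps)$, for $\lambda f$ is $(\lambda_\eps f_\eps)$, and so on, and observe that the corresponding equality holds pointwise in $\eps$ for the classical Riemann integrals of smooth functions; taking equivalence classes in $\rcrho$ yields the asserted identities. Note that \ref{enu:chageOfExtremes} requires no order between $c$ and $d$, and that \ref{enu:additivityDomain} holds $\eps$-wise for the Riemann integral regardless of the relative position of $c_\eps$, $d_\eps$, $e_\eps$ in $\R$.

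For the fundamental theorem \ref{enu:foundamental}, I would rather argue by uniqueness of the primitive (Thm.~\ref{thm:existenceUniquenessPrimitives}): the functions $F(x):=\int_c^x f'$ and $G(x):=f(x)-f(c)$ both satisfy $F(c)=G(c)=0$ and $F'=G'=f'$ on $[a,b]$ (the former by Def.~\ref{def:integral}), hence $F=G$, and evaluation at $d$ gives \ref{enu:foundamental}. Integration by parts \ref{enu:intByParts} then follows by applying \ref{enu:foundamental} to $fg$, which is a GSF by the category property Thm.~\ref{thm:propGSF}\ref{enu:category}, using the Leibniz rule $(fg)'=f'g+fg'$ from Thm.~\ref{thm:rulesDer}; splitting the resulting integral by linearity \ref{enu:additivityFunction} and rearranging produces the formula.

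The only real obstacle is bookkeeping: one has to verify that a defining net for a sum, product, or derivative of GSF may be taken to be the pointwise sum, product, or derivative of the defining nets of the constituents, so that the $\eps$-wise identities of classical one-dimensional integral calculus transfer cleanly to equalities in $\rcrho$. This compatibility is built into the definition of GSF and is supplied by the globalization statement Thm.~\ref{thm:propGSF}\ref{enu:globallyDefNet}; once it is granted, each of the six items is essentially a one-line translation of the ordinary rules of integral calculus.
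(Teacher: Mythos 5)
Your proof is correct, and it follows the standard route: the paper itself states Thm.~\ref{thm:intRules} without proof (it is recalled from the references cited at the start of Section~\ref{sec:Basic-notions}), and the argument given there is exactly this reduction of (i)--(iv) to the $\eps$-wise classical identities via the representative formula of Thm.~\ref{thm:existenceUniquenessPrimitives}, with (v) obtained from uniqueness of primitives and (vi) from (v) applied to $fg$ together with the Leibniz rule. The only nitpick is that closure of GSF under products is more directly a consequence of the defining nets (or of writing $fg$ as the composition of the pairing $(f,g)$ with the multiplication map) than of the category property as such, but this does not affect the validity of the argument.
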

~
\begin{thm}
\label{thm:changeOfVariablesInt}Let $f\in{}^{\rho}\Gcinf(U,\rcrho)$
and $\phi\in{}^{\rho}\Gcinf(V,U)$ be generalized smooth functions
defined on sharply open domains in $\rcrho$. Let $a$, $b\in\rcrho$,
with $a<b$, such that $[a,b]\subseteq V$, $\phi(a)<\phi(b)$, $[\phi(a),\phi(b)]\subseteq U$.
Finally, assume that $\phi([a,b])\subseteq[\phi(a),\phi(b)]$. Then
\[
\int_{\phi(a)}^{\phi(b)}f(t)\diff{t}=\int_{a}^{b}f\left[\phi(s)\right]\cdot\phi'(s)\diff{s}.
\]
\end{thm}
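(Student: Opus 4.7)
The plan is to reduce the identity to the classical smooth change-of-variables formula, applied $\eps$-wise to globally defined representatives. First I would check that both integrals are well-defined in the sense of Definition \ref{def:integral}: on the left, since $\phi(a)<\phi(b)$ and $[\phi(a),\phi(b)]\subseteq U$, the restriction $f|_{[\phi(a),\phi(b)]}$ is a GSF and Theorem \ref{thm:existenceUniquenessPrimitives} applies; on the right, Fermat--Reyes (Theorem \ref{thm:FR-forGSF}) gives $\phi'\in\gsfo(V,\rcrho)$, and since $\phi([a,b])\subseteq U$ the composition $f\circ\phi$ is a GSF on $[a,b]$ by the category property of Theorem \ref{thm:propGSF}(vii), so that $(f\circ\phi)\cdot\phi'\in\gsfo([a,b],\rcrho)$ by Theorem \ref{thm:rulesDer}.

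Next, by Theorem \ref{thm:propGSF}(vi), I would choose globally defined smooth representatives $f_\eps\in\cinfty(\R,\R)$ of $f$ and $\phi_\eps\in\cinfty(\R,\R)$ of $\phi$, together with $a=[a_\eps]$, $b=[b_\eps]$. Theorem \ref{thm:existenceUniquenessPrimitives} then represents the two sides as
\[
\int_{\phi(a)}^{\phi(b)}f(t)\,\dd t=\left[\int_{\phi_\eps(a_\eps)}^{\phi_\eps(b_\eps)}f_\eps(t)\,\dd t\right],\qquad\int_a^b f[\phi(s)]\cdot\phi'(s)\,\dd s=\left[\int_{a_\eps}^{b_\eps}f_\eps(\phi_\eps(s))\phi'_\eps(s)\,\dd s\right].
\]
For each fixed $\eps$, $f_\eps$ and $\phi_\eps$ are ordinary smooth functions on $\R$, and the classical change-of-variables identity (a direct consequence of the smooth chain rule together with the fundamental theorem of calculus) equates the two inner integrals. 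Hence the corresponding equivalence classes agree in $\rcrho$, which is the claim.

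The main obstacle is not the identity itself but the bookkeeping around domains: one must verify that every integral and composition appearing is a legitimate GSF on the stated set. The passage to global representatives via Theorem \ref{thm:propGSF}(vi) eliminates any worry about $\phi_\eps([a_\eps,b_\eps])$ escaping the original $\eps$-wise domain of $f_\eps$, and the hypothesis $\phi([a,b])\subseteq[\phi(a),\phi(b)]$, although stronger than $\phi([a,b])\subseteq U$, is used only to fit the GSF composition cleanly within the stated codomain intervals; no monotonicity of $\phi_\eps$ is needed at the $\eps$-level, since the classical change-of-variables formula holds for arbitrary smooth $\phi_\eps$.
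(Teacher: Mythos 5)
Your argument is correct, and in fact the paper offers no proof of this theorem to compare against: it sits among the preliminaries of Section \ref{sec:Basic-notions}, whose proofs are explicitly deferred to \cite{GKV,GiKu16,Gi-Ku-St15}. Your $\eps$-wise reduction is a complete and legitimate route: global representatives exist by Thm.~\ref{thm:propGSF}.\ref{enu:globallyDefNet}, both sides are represented by the nets you write down via Thm.~\ref{thm:existenceUniquenessPrimitives} (using closure under composition and Thm.~\ref{thm:FR-forGSF}.\ref{enu:defDer} to see that $(f_\eps\circ\phi_\eps)\cdot\phi_\eps'$ defines $(f\circ\phi)\cdot\phi'$ on $[a,b]$), and the classical identity needs no monotonicity of $\phi_\eps$, exactly as you say. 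It is worth contrasting this with the ``intrinsic'' proof that the framework is designed to support and that avoids representatives altogether: let $F:=\int_{\phi(a)}^{(-)}f\in\gsf([\phi(a),\phi(b)],\rcrho)$ be the primitive from Def.~\ref{def:integral}; the hypothesis $\phi([a,b])\subseteq[\phi(a),\phi(b)]$ guarantees that $F\circ\phi\in\gsf([a,b],\rcrho)$ is defined, the chain rule (Thm.~\ref{thm:rulesDer}) gives $(F\circ\phi)'=\left(f\circ\phi\right)\cdot\phi'$, and Thm.~\ref{thm:intRules}.\ref{enu:foundamental} yields $\int_a^b f\left[\phi(s)\right]\phi'(s)\diff{s}=F(\phi(b))-F(\phi(a))=\int_{\phi(a)}^{\phi(b)}f(t)\diff{t}$. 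This clarifies the one point where your discussion is slightly off: in your global-representative argument the hypothesis $\phi([a,b])\subseteq[\phi(a),\phi(b)]$ is indeed not load-bearing (since $\phi$ already maps $V$ into $U$), but in the intrinsic proof it is precisely what makes the composition $F\circ\phi$ meaningful, because the primitive $F$ is only provided on $[\phi(a),\phi(b)]$; the hypothesis is therefore not mere ``bookkeeping'' but the price of a representative-free argument.
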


\subsubsection*{Embedding of Schwartz distributions and Colombeau functions}

We finally recall two results that give a certain flexibility in constructing
embeddings of Schwartz distributions. Note that both the infinitesimal
$\rho$ and the embedding of Schwartz distributions have to be chosen
depending on the problem we aim to solve. A trivial example in this
direction is the ODE $y'=y/\diff{\eps}$, which cannot be solved for
$\rho=(\eps)$, but it has a solution for $\rho=(e^{-1/\eps})$. As
another simple example, if we need the property $H(0)=1/2$, where
$H$ is the Heaviside function, then we have to choose the embedding
of distributions accordingly. See also \cite{GiLu15,Lu-Gi16b} for
further details.\\
 If $\phi\in\mathcal{D}(\R^{n})$, $r\in\R_{>0}$ and $x\in\R^{n}$,
we use the notations $r\odot\phi$ for the function $x\in\R^{n}\mapsto\frac{1}{r^{n}}\cdot\phi\left(\frac{x}{r}\right)\in\R$
and $x\oplus\phi$ for the function $y\in\R^{n}\mapsto\phi(y-x)\in\R$.
These notations permit to highlight that $\odot$ is a free action
of the multiplicative group $(\R_{>0},\cdot,1)$ on $\mathcal{D}(\R^{n})$
and $\oplus$ is a free action of the additive group $(\R_{>0},+,0)$
on $\mathcal{D}(\R^{n})$. We also have the distributive property
$r\odot(x\oplus\phi)=rx\oplus r\odot\phi$. 
\begin{lem}
\label{lem:strictDeltaNet}Let $b\in\rti$
be a net such that $\lim_{\eps\to0^{+}}b_{\eps}=+\infty$. Let $d\in(0,1)$.
There exists a net $\left(\psi_{\eps}\right)_{\eps\in I}$ of $\mathcal{D}(\R^{n})$
with the properties:

\begin{enumerate}
\item \label{enu:suppStrictDeltaNet}$supp(\psi_{\eps})\subseteq B_{1}(0)$
for all $\eps\in I$. 
\item \label{enu:intOneStrictDeltaNet}$\int\psi_{\eps}=1$ for all $\eps\in I$. 
\item \label{enu:moderateStrictDeltaNet}$\forall\alpha\in\N^{n}\,\exists p\in\N:\ \sup_{x\in\R^{n}}\left|\partial^{\alpha}\psi_{\eps}(x)\right|=O(b_{\eps}^{p})$
as $\eps\to0^{+}$. 
\item \label{enu:momentsStrictDeltaNet}$\forall j\in\N\,\forall^{0}\eps:\ 1\le|\alpha|\le j\Rightarrow\int x^{\alpha}\cdot\psi_{\eps}(x)\diff{x}=0$. 
\item \label{enu:smallNegPartStrictDeltaNet}$\forall\eta\in\R_{>0}\,\forall^{0}\eps:\ \int\left|\psi_{\eps}\right|\le1+\eta$. 
\item \label{enu:int1Dim}If $n=1$, then the net $(\psi_{\eps})_{\eps\in I}$
can be chosen so that $\int_{-\infty}^{0}\psi_{\eps}=d$. 
\end{enumerate}
\noindent In particular $\psi_{\eps}^{b}:=b_{\eps}^{-1}\odot\psi_{\eps}$
satisfies \ref{enu:intOneStrictDeltaNet} - \ref{enu:smallNegPartStrictDeltaNet}. 
\end{lem}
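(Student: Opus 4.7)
The plan is to reduce the $\eps$-indexed construction to a discrete one indexed by $k\in\N$ and then diagonalize. First, I would show that for every $k\in\N$ there exists $\chi_k\in\mathcal{D}(\R^n)$ satisfying (i), (ii), the finite-order version of (iv) (i.e.\ $\int x^\alpha\chi_k\,dx=0$ for $1\le|\alpha|\le k$), and the $L^1$-bound $\int|\chi_k|\le 1+\tfrac{1}{k+1}$. With such a family in hand, define
$$k(\eps) := \max\bigl\{k\in\N \,:\, \max_{|\alpha|\le k}\|\partial^\alpha \chi_k\|_\infty \le b_\eps\bigr\}$$
and set $\psi_\eps:=\chi_{k(\eps)}$. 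Since $b_\eps\to+\infty$, $k(\eps)\to+\infty$, so properties (i), (ii), (iv), (v) transfer immediately from the discrete family. For (iii), fix any $\alpha\in\N^n$; once $\eps$ is small enough that $k(\eps)\ge|\alpha|$, we have $\|\partial^\alpha\psi_\eps\|_\infty\le b_\eps$, giving the moderate bound with exponent $p=1$.

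To build $\chi_k$, I would fix a non-negative seed $\phi\in\mathcal{D}(\R^n)$ with $\int\phi=1$ and $\supp(\phi)\subseteq B_{1/4}(0)$, set $\phi_\delta(x):=\delta^{-n}\phi(x/\delta)$, and look for $\chi_k$ as a finite superposition of concentrated translates,
$$\chi_k(x) := \sum_{j=0}^{N(k)} c_j^{(k)}\,\phi_{\delta_k}\bigl(x-t_j^{(k)}\bigr),$$
where the nodes $t_j^{(k)}$ lie in $B_{1/2}(0)$, are distinct, and (in dimension one) are arranged symmetrically as $t_0=0$ together with pairs $\{\pm a_\ell\}_{\ell=1}^{\lfloor k/2\rfloor}$. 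Imposing $\int x^\alpha\chi_k\,dx=\delta_{\alpha,0}$ for $|\alpha|\le k$ and expanding by the binomial theorem together with $\int y^\beta\phi_\delta\,dy=\delta^{|\beta|}\int y^\beta\phi\,dy$, one obtains a linear system in $c^{(k)}$ whose leading order as $\delta_k\to 0$ is a Vandermonde system on the nodes; a direct calculation gives $c_0^{(k)}=1+O(\delta_k^2)$ and $c_j^{(k)}=O(\delta_k^2)$ for $j\neq 0$. Choosing $\delta_k$ small enough in terms of $k$ (and such that the translated bumps have pairwise disjoint supports) yields $\supp(\chi_k)\subseteq B_1(0)$ and $\int|\chi_k|=\sum_j|c_j^{(k)}|\le 1+\tfrac{1}{k+1}$.

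The main obstacle is the control in (v). For $k\ge 2$ the condition $\int x^2\chi_k\,dx=0$ forces the function to change sign, so $\int|\chi_k|=1+2\int\chi_k^-$ is strictly greater than $1$; moreover, the naive approach of seeking $\chi_k$ in the form $P_k\cdot\phi$ for a correcting polynomial $P_k$ fails here, because a rescaling computation shows that $\int|P_k\phi^r|$ is independent of $r$. The quadrature-style superposition succeeds precisely because the corrective weights $c_j^{(k)}$ for $j\neq 0$ can be made arbitrarily small by shrinking the bump scale $\delta_k$, while the central weight $c_0^{(k)}$ concentrates near $1$ — the negative mass is then confined to a few thin annular regions of total measure $O(\delta_k)$ and $L^1$-norm $O(\delta_k^2)$.

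Finally, property (vi) is imposed as one additional linear constraint $\sum_j c_j^{(k)}\int_{-\infty}^0\phi_{\delta_k}(x-t_j^{(k)})\,dx=d$ on the weights, handled by adding one further asymmetric node (or by breaking the symmetry of $\phi$); this constraint is generically independent of the $k+1$ moment equations, the extended system remains invertible for small $\delta_k$, and the $O(\delta_k^2)$ asymptotics for the corrective weights survive. The concluding claim that $\psi_\eps^b:=b_\eps^{-1}\odot\psi_\eps$ still satisfies (ii)--(v) then follows directly from the definition of the $\odot$-action and from the scaling-invariance of each of those properties.
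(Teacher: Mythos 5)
The paper does not actually prove this lemma: it is recalled as a preliminary, with all proofs of Section~2 deferred to the cited references, so there is no in-text argument to compare yours against. Judged on its own, your architecture is sound and is the natural one: build a discrete family $(\chi_k)_{k\in\N}$ with moments vanishing up to order $k$ and $\int|\chi_k|\le 1+\frac{1}{k+1}$, then diagonalize by a calibration $k(\eps)\to\infty$ against $b_\eps$, which transfers (iv) and (v) (only required for $\eps$ small) and yields (iii) with $p=1$ by the very definition of $k(\eps)$. The analytic heart --- a dominant bump $c_0\phi_{\delta_k}$ at the origin whose own moments of order $\ge 1$ are $O(\delta_k^2)$ for a symmetric seed, corrected by bumps at $O(1)$ nodes carrying total weight $O(\delta_k^2)$ --- is exactly what defeats the obstruction you correctly identify, namely that any fixed profile with vanishing second moments has $L^1$ norm bounded away from $1$ and pure rescaling cannot repair this. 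The transfer to $\psi_\eps^b$ is routine as you say (for (iii) it is a computation, $\sup|\partial^\alpha(b_\eps^{-1}\odot\psi_\eps)|=O(b_\eps^{p+n+|\alpha|})$, rather than scale invariance, but still of the required form).

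Two points need tightening. First, for $n>1$ ``distinct nodes'' does not guarantee that the matrix $\left(t_j^\alpha\right)_{1\le|\alpha|\le k,\ j}$ has full row rank: the nodes could all lie on the zero set of a polynomial of degree $\le k$. You need a poised (unisolvent) configuration; generic nodes work, but this must be said. Second, and more substantively, your treatment of (vi) presents ``one further asymmetric node'' and ``breaking the symmetry of $\phi$'' as alternatives, whereas you need both at once. With only $k+1$ nodes the system of $k+2$ linear conditions is overdetermined; with an extra node but a seed for which $\int_{-\infty}^{0}\phi\ne d$, the limiting $(k+2)\times(k+2)$ system is solvable but its solution differs from $(1,0,\ldots,0)$ by a fixed nonzero vector proportional to $d-\int_{-\infty}^{0}\phi$, independent of $\delta_k$; then $\sum_j|c_j^{(k)}|$ exceeds $1$ by a fixed amount and (v) fails. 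The repair is to choose the nonnegative concentrated seed with $\int_{-\infty}^{0}\phi=d$ from the outset \emph{and} add the extra node: the limiting solution is then exactly $(1,0,\ldots,0)$, the extended matrix is invertible for suitably placed nodes (a step vector on $k+2$ nodes is generically not a degree-$\le k$ interpolant), and the $O(\delta_k)$ perturbation argument survives. A last cosmetic point: define $k(\eps)$ so that it is finite and defaults to, say, $0$ when the defining set is empty, since (i) and (ii) are demanded for \emph{all} $\eps\in I$.
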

It is worth noting that the condition \ref{enu:momentsStrictDeltaNet}
of null moments is well known in the study of convergence of numerical
solutions of singular differential equations, see e.g.~\cite{To-En04,En-To-Ts05,Ho-Ni-St16}
and references therein.

\noindent Concerning embeddings of Schwartz distributions, we have
the following result, where $\csp{\Omega}:=\{[x_{\eps}]\in[\Omega]\mid\exists K\Subset\Omega\,\forall^{0}\eps:\ x_{\eps}\in K\}$
is called the set of \emph{compactly supported points in }$\Omega\subseteq\R^{n}$. 
\begin{thm}
\label{thm:embeddingD'}Under the assumptions of Lemma \ref{lem:strictDeltaNet},
let $\Omega\subseteq\R^{n}$ be an open set and let $(\psi_{\eps}^{b})$
be the net defined in \ref{lem:strictDeltaNet}. Then the mapping
\[
\iota_{\Omega}^{b}:T\in\mathcal{E}'(\Omega)\mapsto\left[\left(T\ast\psi_{\eps}^{b}\right)(-)\right]\in\gsf(\csp{\Omega},\rti)
\]
uniquely extends to a sheaf morphism of real vector spaces 
\[
\iota^{b}:\mathcal{D}'\ra\gsf(\csp{(-)},\rti),
\]
and satisfies the following properties:

\begin{enumerate}
\item If $b\ge\diff{\rho}^{-a}$ for some $a\in\R_{>0}$, then $\iota^{b}|_{\Coo(-)}:\Coo(-)\ra\gsf(\csp{(-)},\RC{\rho})$
is a sheaf morphism of algebras; 
\item If $T\in\mathcal{E}'(\Omega)$ then $\text{\text{\emph{supp}}}(T)=\text{\emph{\text{supp}}}(\iota_{\Omega}^{b}(T))$; 
\item \label{enu:eps-D'}$\lim_{\eps\to0^{+}}\int_{\Omega}\iota_{\Omega}^{b}(T)_{\eps}\cdot\phi=\langle T,\phi\rangle$
for all $\phi\in\mathcal{D}(\Omega)$ and all $T\in\mathcal{D}'(\Omega)$; 
\item $\iota^{b}$ commutes with partial derivatives, i.e.~$\partial^{\alpha}\left(\iota_{\Omega}^{b}(T)\right)=\iota_{\Omega}^{b}\left(\partial^{\alpha}T\right)$
for each $T\in\mathcal{D}'(\Omega)$ and $\alpha\in\N$. 
\end{enumerate}
\end{thm}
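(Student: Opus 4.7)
The plan is to proceed in two stages: first show that $\iota_\Omega^b$ is well-defined as a map $\mathcal{E}'(\Omega)\to\gsf(\csp{\Omega},\rti)$, then extend it to a sheaf morphism $\iota^b$ on $\mathcal{D}'$ by a standard partition of unity argument, and finally verify the four listed properties.

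For well-definedness, I would check that for $T\in\mathcal{E}'(\Omega)$ the net $(T*\psi_\eps^b)$ defines a GSF on $\csp{\Omega}$. Since convolution commutes with differentiation, this reduces to a moderateness estimate for $\partial^\alpha(T*\psi_\eps^b)(x_\eps)=\langle T,(\partial^\alpha\psi_\eps^b)(x_\eps-\cdot)\rangle$ at any compactly supported point $[x_\eps]$. Using the structure theorem to write $T$ as a finite sum of derivatives of continuous functions supported on a fixed compact set, together with item (\ref{enu:moderateStrictDeltaNet}) of Lemma \ref{lem:strictDeltaNet}, which bounds the sup-norms of all derivatives of $\psi_\eps^b$ by a power of $b_\eps$ and hence of $\rho_\eps^{-1}$, one obtains the required $\rho$-moderateness; representative-independence is analogous. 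The extension to $\mathcal{D}'(\Omega)$ then uses a locally finite partition of unity $\{\chi_i\}$: each $\chi_i T$ lies in $\mathcal{E}'(\Omega)$ and $\iota^b(T)$ is defined as the locally finite sum of the $\iota_\Omega^b(\chi_i T)$. Independence of the partition and compatibility with restriction to open subsets reduce to the compactly supported case together with the observation that $\supp(T*\psi_\eps^b)$ lies in a $b_\eps^{-1}$-neighborhood of $\supp(T)$.

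Properties (ii), (iii), (iv) are then essentially classical transferred to this context. Property (iv) is immediate from $\partial^\alpha(T*\psi_\eps^b)=(\partial^\alpha T)*\psi_\eps^b$. For (iii) I would write $\int(T*\psi_\eps^b)\,\phi=\langle T,\check{\psi}_\eps^b*\phi\rangle$ and use items (\ref{enu:suppStrictDeltaNet})--(\ref{enu:intOneStrictDeltaNet}) of Lemma \ref{lem:strictDeltaNet} together with $b_\eps\to\infty$ to conclude that $\check{\psi}_\eps^b*\phi\to\phi$ in $\mathcal{D}(\Omega)$. For (ii), the inclusion $\supp(\iota_\Omega^b(T))\subseteq\supp(T)$ follows because $(T*\psi_\eps^b)(x)=0$ whenever the $b_\eps^{-1}$-ball around $x$ is disjoint from $\supp(T)$, while the reverse inclusion uses (iii) to recover the distributional pairing from the embedded net.

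The main obstacle is property (i): to show that $\iota^b$ is a sheaf morphism of algebras on $\Coo(-)$ when $b\ge\diff{\rho}^{-a}$, I must prove that the net $\bigl(fg*\psi_\eps^b-(f*\psi_\eps^b)(g*\psi_\eps^b)\bigr)$ is $\rho$-negligible at every compactly supported point, for all $f,g\in\Coo(\Omega)$. The strategy is to Taylor-expand $f$ and $g$ around the evaluation point inside the convolution integral and invoke the null moments condition (\ref{enu:momentsStrictDeltaNet}): all polynomial contributions up to an arbitrary prescribed order $j$ then cancel, leaving a remainder bounded by $b_\eps^{-(j+1)}$ times sup-norms of derivatives of $f$ and $g$ on a fixed compact set. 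The hypothesis $b\ge\diff{\rho}^{-a}$ yields $b_\eps^{-(j+1)}\le\rho_\eps^{a(j+1)}$, which dominates any element of $\mathcal{I}(\rho)^{-1}$ once $j$ is large enough, giving $\rho$-negligibility. This is the step in which both the vanishing of moments and the matching between $b$ and the gauge $\rho$ are indispensable; sheafiness is then inherited from the compatibility with restrictions already verified at the well-definedness stage.
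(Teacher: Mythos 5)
The paper does not actually prove this theorem: it is recalled in Section \ref{sec:Basic-notions} as a preliminary, with the proofs delegated to the references \cite{GKV,GiKu16,Gi-Ku-St15}, so there is no in-paper argument to compare yours against. Judged on its own, your proposal reconstructs the standard proof of this embedding result correctly and isolates the two genuinely non-routine points: moderateness of $\bigl(\partial^{\alpha}(T\ast\psi_{\eps}^{b})(x_{\eps})\bigr)$ at compactly supported points via the finite order (or structure theorem) of $T\in\mathcal{E}'(\Omega)$ combined with Lemma \ref{lem:strictDeltaNet}.\ref{enu:moderateStrictDeltaNet} and the moderateness of $b$; and the multiplicativity on $\Coo(\Omega)$ via Taylor expansion inside the convolution, the null-moments condition \ref{enu:momentsStrictDeltaNet}, and the hypothesis $b\ge\diff{\rho}^{-a}$ to convert the $O(b_{\eps}^{-(j+1)})$ remainder into $\rho$-negligibility for arbitrary $j$. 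The partition-of-unity extension to $\mathcal{D}'$ and the verifications of (ii)--(iv) are the classical arguments and are correctly sketched. Two small points you should make explicit in a full write-up: the remainder estimate in (i) and the convergence $\check{\psi}_{\eps}^{b}\ast\phi\to\phi$ in (iii) both rely on the uniform $L^{1}$-bound \ref{enu:smallNegPartStrictDeltaNet}, since $\psi_{\eps}$ is not nonnegative (it cannot be, given the vanishing moments); and negligibility of the product defect should either be checked for all derivatives as well or reduced to the zeroth-order estimate by the standard fact that a moderate net with negligible values is negligible, which in the GSF setting amounts to equality of the induced maps on generalized points.
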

Concerning the embedding of Colombeau generalized functions, we recall
that the special Colombeau algebra on $\Om$ is defined as the quotient
$\gs(\Om):=\esm(\Om)/\ns(\Om)$ of \emph{moderate nets} over \emph{negligible
nets}, where the former is 
\begin{multline*}
\esm(\Om):=\{(u_{\eps})\in\cinfty(\Omega)^{I}\mid\forall K\comp\Om\,\forall\alpha\in\N^{n}\,\exists N\in\N:\sup_{x\in K}|\partial^{\alpha}u_{\eps}(x)|=O(\eps^{-N})\}
\end{multline*}
and the latter is 
\begin{multline*}
\ns(\Om):=\{(u_{\eps})\in\cinfty(\Omega)^{I}\mid\forall K\comp\Om\,\forall\alpha\in\N^{n}\,\forall m\in\N:\sup_{x\in K}|\partial^{\alpha}u_{\eps}(x)|=O(\eps^{m})\}.
\end{multline*}
Using $\rho=(\eps)$, we have the following compatibility result: 
\begin{thm}
\label{thm:inclusionCGF}A Colombeau generalized function $u=(u_{\eps})+\ns(\Om)^{d}\in\gs(\Omega)^{d}$
defines a generalized smooth map $u:[x_{\eps}]\in\csp{\Omega}\longrightarrow[u_{\eps}(x_{\eps})]\in\Rtil^{d}$
which is locally Lipschitz on the same neighborhood of the Fermat
topology for all derivatives. This assignment provides a bijection
of $\gs(\Omega)^{d}$ onto $\gsf(\csp{\Omega},\rti^{d})$ for every
open set $\Omega\subseteq\R^{n}$.
\end{thm}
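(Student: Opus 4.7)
The plan is to verify in turn three claims: (a) the formula $u([x_\eps]) := [u_\eps(x_\eps)]$ defines a generalized smooth map $\csp{\Omega}\to\rti^d$, independently of the representative of $x$ and of the class of $(u_\eps)$; (b) the local Lipschitz property holds on a \emph{single} Fermat neighborhood that simultaneously witnesses the Lipschitz bound for every derivative order; (c) the resulting assignment is a bijection onto $\gsf(\csp{\Omega},\rti^d)$. For (a), given $[x_\eps]\in\csp{\Omega}$ there exists $K\comp\Omega$ with $x_\eps\in K$ for $\eps$ small, so the $\esm$-moderateness bound $|\partial^\alpha u_\eps(x_\eps)|\le\sup_K|\partial^\alpha u_\eps|=O(\eps^{-N})$ puts $(\partial^\alpha u_\eps(x_\eps))$ in $\R_\rho^d$. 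If $[x_\eps]=[x'_\eps]$, enlarging $K$ to a compact $K'\comp\Omega$ that contains a Euclidean neighborhood of $K$ and applying the mean value theorem gives $|\partial^\alpha u_\eps(x_\eps)-\partial^\alpha u_\eps(x'_\eps)|\le\sup_{K'}|\nabla\partial^\alpha u_\eps|\cdot|x_\eps-x'_\eps|$, which is moderate times negligible and therefore negligible; and if $(u_\eps-v_\eps)\in\ns(\Omega)^d$, the Colombeau bound $\sup_K|u_\eps-v_\eps|=O(\eps^m)$ for all $m$ yields $[u_\eps(x_\eps)]=[v_\eps(x_\eps)]$. So $u$ is a well-defined GSF with defining net $(u_\eps)$ on $\Omega_\eps=\Omega$. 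For (b), pick a real $r>0$ so small that the Euclidean $r$-neighborhood of $K$ lies in some $K'\comp\Omega$; then $U:=B_r^{{\scriptscriptstyle\text{F}}}(x_0)$ is a Fermat neighborhood of $x_0$ sitting inside $\csp{\Omega}$, all of whose representatives stay eventually in $K'$, and the same mean value inequality on $K'$ produces $|\partial^\alpha u(x)-\partial^\alpha u(y)|\le L_\alpha|x-y|$ with $L_\alpha:=[\sup_{K'}|\nabla\partial^\alpha u_\eps|]\in\rti$, valid on the \emph{same} $U$ for every $\alpha$.

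For injectivity, suppose $(u_\eps),(v_\eps)\in\esm(\Omega)^d$ induce the same GSF. By the Fermat--Reyes theorem (Thm.~\ref{thm:FR-forGSF}), every derivative of the associated GSF is itself a GSF defined by the corresponding derivatives of the component nets, so $[\partial^\alpha(u_\eps-v_\eps)(x_\eps)]=0$ for every $\alpha\in\N^n$ and every $[x_\eps]\in\csp{\Omega}$. Set $w_\eps:=u_\eps-v_\eps$ and fix $\alpha$, $K\comp\Omega$ and $m\in\N$. If $\sup_K|\partial^\alpha w_\eps|$ were not $O(\eps^m)$, we could extract $\eps_n\downarrow 0$ and $y_n\in K$ with $|\partial^\alpha w_{\eps_n}(y_n)|>\eps_n^m$ and define $x_\eps:=y_n$ when $\eps=\eps_n$, and $x_\eps:=y_1$ otherwise. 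Then $[x_\eps]\in\csp{\Omega}$ but $[\partial^\alpha w_\eps(x_\eps)]\ne 0$, a contradiction. Hence $\sup_K|\partial^\alpha w_\eps|=O(\eps^m)$ for every $\alpha$, $K$, and $m$, i.e., $(w_\eps)\in\ns(\Omega)^d$.

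For surjectivity, given $f\in\gsf(\csp{\Omega},\rti^d)$, invoke item \ref{enu:globallyDefNet} of Thm.~\ref{thm:propGSF} to pick a globally defining net $v_\eps\in C^\infty(\R^n,\R^d)$ and set $u_\eps:=v_\eps|_\Omega$. To check $(u_\eps)\in\esm(\Omega)^d$, fix $K\comp\Omega$ and $\alpha$; if $\sup_K|\partial^\alpha u_\eps|$ were not $O(\eps^{-N})$ for any $N$, the same patched-subsequence construction produces $[x_\eps]\in\csp{\Omega}$ for which $(\partial^\alpha v_\eps(x_\eps))$ is not moderate, contradicting the moderateness requirement in Def.~\ref{def:netDefMap}. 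Since $f([x_\eps])=[u_\eps(x_\eps)]$ by construction, the class of $(u_\eps)$ in $\gs(\Omega)^d$ is a preimage of $f$. The central technical point throughout is precisely this promotion of a pointwise $\rti$-statement (at every compactly supported point) to a uniform Colombeau estimate on an arbitrary $K\comp\Omega$; the argument works cleanly because $\csp{\Omega}$ is rich enough to contain any internal net ``patched'' from a counterexample subsequence eventually lying in $K$, so the test-net built from a bad subsequence is always admissible as a compactly supported point.
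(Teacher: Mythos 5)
Your proof is correct. Note that the paper does not prove Theorem \ref{thm:inclusionCGF} at all: it is recalled as a known compatibility result, with the proofs deferred to the cited references \cite{GKV,GiKu16,Gi-Ku-St15}, and your argument (pointwise moderateness/negligibility at compactly supported points upgraded to uniform estimates on $K\comp\Omega$ via the patched-subsequence construction, plus the Fermat--Reyes theorem to make derivatives intrinsic) is essentially the standard one found there. A minor simplification for injectivity: since $(u_\eps-v_\eps)$ is already known to be $\esm$-moderate, the classical Colombeau characterization of $\ns$ lets you test negligibility on the zeroth-order sups alone, so the detour through derivatives of the GSF could be avoided.
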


\subsection{Extreme value theorem and functionally compact sets}

For GSF, suitable generalizations of many classical theorems of differential
and integral calculus hold: intermediate value theorem, mean value
theorems, a sheaf property for the Fermat topology, local and global
inverse function theorems, Banach fixed point theorem and a corresponding
Picard-Lindelöf theorem, see \cite{GKV,Gi-Ku-St15,Lu-Gi16,GiKu16}.

Even though the intervals $[a,b]\subseteq\Rtil$, $a$, $b\in\R$,
are neither compact in the sharp nor in the Fermat topology (see \cite[Thm. 25]{GKV}),
analogously to the case of smooth functions, a GSF satisfies an extreme
value theorem on such sets. In fact, we have: 
\begin{thm}
\label{thm:extremeValues}Let $f\in\Gcinf(X,\Rtil)$ be a generalized
smooth function defined on the subset $X$ of $\Rtil^{n}$. Let $\emptyset\ne K=[K_{\eps}]\subseteq X$
be an internal set generated by a sharply bounded net $(K_{\eps})$
of compact sets $K_{\eps}\comp\R^{n}$ , then 
\begin{equation}
\exists m,M\in K\,\forall x\in K:\ f(m)\le f(x)\le f(M).\label{eq:epsExtreme}
\end{equation}
\end{thm}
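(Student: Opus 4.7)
The plan is to reduce the statement to the classical extreme value theorem applied $\eps$-wise, and then to verify that the resulting nets of minimizers and maximizers behave well modulo $\sim_\rho$. By Theorem~\ref{thm:propGSF}\ref{enu:globallyDefNet} we may assume without loss of generality that $f$ is defined by a net $f_\eps\in\cinfty(\R^n,\R)$. For each $\eps\in I$ the set $K_\eps\comp\R^n$ is nonempty and compact, so the ordinary extreme value theorem yields points $m_\eps,M_\eps\in K_\eps$ with
\[
f_\eps(m_\eps)\le f_\eps(x_\eps)\le f_\eps(M_\eps)\qquad\text{for all }x_\eps\in K_\eps.
\]

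Next I would show that $(m_\eps),(M_\eps)\in\R_\rho^n$ are moderate. Since $(K_\eps)$ is sharply bounded, there exists $r\in\rcrho_{>0}$ with $K=[K_\eps]\subseteq B_r(0)$, and hence a representative $(r_\eps)\in\R_\rho$ with $K_\eps\subseteq\Eball_{r_\eps}(0)$ for $\eps$ small. Thus $|m_\eps|,|M_\eps|\le r_\eps$ eventually, proving moderateness. Setting $m:=[m_\eps]$ and $M:=[M_\eps]$, the condition $m_\eps,M_\eps\in K_\eps$ for $\eps$ small gives $m,M\in[K_\eps]=K\subseteq X$ directly from Definition~\ref{def:internalStronglyInternal}(i), so $f(m)$ and $f(M)$ are well defined.

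Finally, for the inequality, take any $x\in K$. By definition of internal set there exists a representative $x=[x_\eps]$ with $x_\eps\in K_\eps$ for $\eps$ small, so the pointwise bound $f_\eps(m_\eps)\le f_\eps(x_\eps)\le f_\eps(M_\eps)$ holds for $\eps$ small. Passing to equivalence classes in $\rcrho$ and using that $\le$ on $\rcrho$ is characterized by existence of such representative-wise inequalities yields $f(m)\le f(x)\le f(M)$, which is \eqref{eq:epsExtreme}.

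The main subtlety to be careful about is the dependence on representatives, both of $x\in K$ (for which the internal-set characterization provides exactly the representative we need) and of $f$ itself (handled by the globalization in Theorem~\ref{thm:propGSF}\ref{enu:globallyDefNet}, which removes the technicality of $K_\eps$ possibly not lying inside the original domains $\Omega_\eps$). Sharp boundedness is the essential hypothesis: without it, $(m_\eps)$ and $(M_\eps)$ might fail to be moderate, in which case $m$ and $M$ would not define points of $\rcrho^n$ at all.
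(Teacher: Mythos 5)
The paper itself states this theorem without proof (it is imported from \cite{GK15,GKV}), so there is no internal proof to compare your attempt against; your $\eps$-wise strategy is nevertheless the standard one, and most of the steps are sound: globalizing the defining net via Thm.~\ref{thm:propGSF}.\ref{enu:globallyDefNet} so that $f_\eps$ is defined on all of $K_\eps$, applying the classical extreme value theorem for each small $\eps$, and passing the pointwise inequalities to $\rcrho$ through the representative-wise characterization of $\le$.

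There is, however, one genuine gap: the inference ``$K=[K_\eps]\subseteq B_r(0)$, hence $K_\eps\subseteq\Eball_{r_\eps}(0)$ for $\eps$ small'' does not follow from the paper's definition of sharp boundedness. Def.~\ref{def:internalStronglyInternal} declares the \emph{net} $(K_\eps)$ sharply bounded exactly when the \emph{internal set} $[K_\eps]$ is contained in some ball $B_r(0)$ of $\rcrho^n$, and the internal set only registers $\rho$-moderate nets of points. Take for instance $K_\eps=\{0,\rho_\eps^{-1/\eps}\}\subseteq\R$: no moderate net can select the second point for arbitrarily small $\eps$, so $[K_\eps]=\{0\}\subseteq B_1(0)$ and the hypothesis is satisfied; yet for $f_\eps(x)=x$ your maximizer is $M_\eps=\rho_\eps^{-1/\eps}$, which is not moderate, so $[M_\eps]$ is not a point of $\rcrho$ at all. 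The repair is easy but must be made explicit: since every $[x_\eps]\in[K_\eps]$ satisfies $|x_\eps|\le r_\eps$ for $\eps$ small, one checks that $[K_\eps]=\bigl[K_\eps\cap\overline{\Eball_{r_\eps}(0)}\bigr]$, and then runs your argument on the truncated net, which is still compact and is genuinely bounded $\eps$-wise by the moderate net $(r_\eps)$. (Alternatively, invoke the $\eps$-wise characterization of sharply bounded nets established in \cite{GK15}.) A second, much smaller point: $K\ne\emptyset$ only guarantees $K_\eps\ne\emptyset$ for $\eps$ small, so the classical extreme value theorem is available only for those $\eps$; for the remaining $\eps$ the points $m_\eps$, $M_\eps$ should be chosen arbitrarily, which is harmless.
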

We shall use the assumptions on $K$ and $(K_{\eps})$ given in this
theorem to introduce a notion of ``compact subset'' which behaves
better than the usual classical notion of compactness in the sharp
topology. 
\begin{defn}
\label{def:functCmpt} A subset $K$ of $\Rtil^{n}$ is called \emph{functionally
compact}, denoted by $K\fcmp\Rtil^{n}$, if there exists a net $(K_{\eps})$
such that

\begin{enumerate}
\item \label{enu:defFunctCmpt-internal}$K=[K_{\eps}]\subseteq\Rtil^{n}$ 
\item \label{enu:defFunctCmpt-sharpBound}$(K_{\eps})$ is sharply bounded 
\item \label{enu:defFunctCmpt-cmpt}$\forall\eps\in I:\ K_{\eps}\Subset\R^{n}$ 
\end{enumerate}
If, in addition, $K\subseteq U\subseteq\Rtil^{n}$ then we write $K\fcmp U$.
Finally, we write $[K_{\eps}]\fcmp U$ if \ref{enu:defFunctCmpt-sharpBound},
\ref{enu:defFunctCmpt-cmpt} and $[K_{\eps}]\subseteq U$ hold.
\end{defn}
\noindent We motivate the name \emph{functionally compact subset}
by noting that on this type of subsets, GSF have properties very similar
to those that ordinary smooth functions have on standard compact sets. 
\begin{rem}
\noindent \label{rem:defFunctCmpt}\ 

\begin{enumerate}
\item \label{enu:rem-defFunctCmpt-closed}By \cite[Prop. 2.3]{ObVe08},
any internal set $K=[K_{\eps}]$ is closed in the sharp topology.
In particular, the open interval $(0,1)\subseteq\Rtil$ is not functionally
compact since it is not closed. 
\item \label{enu:rem-defFunctCmpt-ordinaryCmpt}If $H\Subset\R^{n}$ is
a non-empty ordinary compact set, then $[H]$ is functionally compact.
In particular, $[0,1]=\left[[0,1]_{\R}\right]$ is functionally compact.
\item \label{enu:rem-defFunctCmpt-empty}The empty set $\emptyset=\widetilde{\emptyset}\fcmp\Rtil$. 
\item \label{enu:rem-defFunctCmpt-equivDef}$\Rtil^{n}$ is not functionally
compact since it is not sharply bounded.
\item \label{enu:rem-defFunctCmpt-cmptlySuppPoints}The set of compactly
supported points $\Rtil_{c}$ is not functionally compact because
the GSF $f(x)=x$ does not satisfy the conclusion \eqref{eq:epsExtreme}
of Prop.~\ref{thm:extremeValues}.
\end{enumerate}
\end{rem}
\noindent In the present paper, we need the following properties of
functionally compact sets. 
\begin{thm}
\label{thm:image}Let $K\subseteq X\subseteq\Rtil^{n}$, $f\in\Gcinf(X,\Rtil^{d})$.
Then $K\fcmp\Rtil^{n}$ implies $f(K)\fcmp\Rtil^{d}$.
\end{thm}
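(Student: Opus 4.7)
The plan is to exhibit an explicit defining net for $f(K)$ and verify the three conditions of Definition \ref{def:functCmpt}. Take representatives $K = [K_\eps]$ with $(K_\eps)$ sharply bounded and $K_\eps \comp \R^n$, and a globally defined net $f_\eps \in \cinfty(\R^n,\R^d)$ of $f$ via Theorem \ref{thm:propGSF}\ref{enu:globallyDefNet}. The natural candidate is $L_\eps := f_\eps(K_\eps)$, which is compact in $\R^d$ (continuous image of a compact set), so condition \ref{enu:defFunctCmpt-cmpt} is automatic and only the equality $f(K) = [L_\eps]$ and the sharp boundedness of $(L_\eps)$ remain to be checked.

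For the equality, the inclusion $f(K) \subseteq [L_\eps]$ is a direct unpacking of the identity $f([x_\eps]) = [f_\eps(x_\eps)]$ whenever $x_\eps \in K_\eps$ for $\eps$ small. For the converse, given $y = [y_\eps] \in [L_\eps]$, I pick $x_\eps \in K_\eps$ with $f_\eps(x_\eps) = y_\eps$ whenever $y_\eps \in L_\eps$ (and any $x_\eps \in K_\eps$ otherwise); sharp boundedness of $(K_\eps)$ makes the net $(x_\eps)$ moderate (a maximizing section $(p_\eps)$ inside $[K_\eps] \subseteq B_R(0)$ shows that $\sup_{x \in K_\eps} |x| = |p_\eps|$ is moderate), so $[x_\eps] \in K$ and $f([x_\eps]) = y$.

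The main step is the sharp boundedness of $(L_\eps)$. I apply the extreme value theorem (Theorem \ref{thm:extremeValues}) to the scalar GSF $|f|^2 \in \gsf(K,\Rtil)$ (a finite sum of products of GSF), yielding $M \in K$ with $|f(x)|^2 \le D := |f(M)|^2 \in \Rtil_{\ge 0}$ for every $x \in K$. For each $\eps$, compactness of $K_\eps$ lets me choose $z_\eps \in K_\eps$ attaining $\max_{x \in K_\eps} |f_\eps(x)|^2$; the net $(z_\eps)$ is moderate by sharp boundedness of $(K_\eps)$, so $z := [z_\eps] \in K$, and the generalized-number estimate $|f(z)|^2 \le D$ unpacks to $|f_\eps(z_\eps)|^2 \le D_\eps + n_\eps$ with $(n_\eps)$ $\rho$-negligible and $\eps$ small. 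Hence $\sup_{y \in L_\eps} |y|^2$ is a moderate net, whence so is $\sup_{y \in L_\eps} |y|$, and $(L_\eps)$ is sharply bounded.

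The main obstacle I anticipate is precisely this last transfer from the generalized-number inequality $|f(z)|^2 \le D$ to a uniform pointwise bound on the $\eps$-wise suprema: the key is to realize each supremum by a concrete maximizer $z_\eps \in K_\eps$ so that the section $(z_\eps)$ can be repackaged as a single generalized point $z \in K$ on which the extreme-value bound applies.
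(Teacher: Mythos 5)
The paper states this theorem without proof (it is imported from the cited work on functionally compact sets), so there is no internal argument to compare against; your route --- pass to a globally defined net $f_\eps$ via Thm.~\ref{thm:propGSF}.\ref{enu:globallyDefNet}, set $L_\eps:=f_\eps(K_\eps)$, check $f(K)=[L_\eps]$, and obtain sharp boundedness of $(L_\eps)$ from the extreme value theorem applied to $|f|^2$ --- is the natural one. The compactness of each $L_\eps$, the two inclusions making up $f(K)=[L_\eps]$, and the unpacking of the generalized inequality $|f(z)|^2\le D$ into an $\eps$-wise bound on $\sup_{y\in L_\eps}|y|$ are all handled correctly, granted the one point discussed below.

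There is one genuinely circular step, and it is the one that all three of your $\eps$-wise selections lean on: the claim that sharp boundedness of $(K_\eps)$ makes every selection $x_\eps\in K_\eps$ (your maximizers $(p_\eps)$, $(z_\eps)$ and the preimages of $(y_\eps)$) into a moderate net. With the definition as literally stated in this paper, ``$(K_\eps)$ sharply bounded'' only asserts $[K_\eps]\subseteq B_R(0)$, i.e.\ it constrains the \emph{moderate} nets that eventually lie in $K_\eps$ and says nothing about other selections; your parenthetical already places $[p_\eps]$ inside $[K_\eps]$, which presupposes the moderateness of $(p_\eps)$ you are trying to establish. Under that literal reading the claim is false: for $\rho_\eps=\eps$ the net $K_\eps:=\{0,e^{1/\eps}\}$ consists of compact sets and generates the sharply bounded internal set $[K_\eps]=\{0\}$, yet $\sup_{x\in K_\eps}|x|=e^{1/\eps}$ is not moderate, and for the GSF defined by the net $f_\eps=\sin$ one has $[f_\eps(K_\eps)]\ne f(\{0\})$, so your candidate net would not even generate $f(K)$. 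The repair is cheap: either use the net-level form of sharp boundedness, $\exists N\,\forall^0\eps:\ K_\eps\subseteq\Eball_{\rho_\eps^{-N}}(0)$ (which is the formulation in the source this paper is quoting), or first replace $K_\eps$ by $K_\eps\cap\{x\in\R^n\mid |x|\le 2R_\eps\}$, which is still compact, generates the same internal set $K$, and is contained in a ball of moderate radius. After this normalization every remaining step of your proof goes through verbatim.
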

\noindent As a corollary of this theorem and Rem.\ \eqref{rem:defFunctCmpt}.\ref{enu:rem-defFunctCmpt-ordinaryCmpt}
we get 
\begin{cor}
\label{cor:intervalsFunctCmpt}If $a$, $b\in\Rtil$ and $a\le b$,
then $[a,b]\fcmp\Rtil$.
\end{cor}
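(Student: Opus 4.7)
My plan is to realize $[a,b]$ as the image of a functionally compact set under a GSF and then appeal to Theorem \ref{thm:image}. By Remark \ref{rem:defFunctCmpt}.\ref{enu:rem-defFunctCmpt-ordinaryCmpt}, the classical interval $[0,1]_\R\Subset\R$ already gives $[0,1]=\bigl[[0,1]_\R\bigr]\fcmp\rcrho$. The natural parametrization of the segment is the affine GSF $\gamma\in\gsf([0,1],\rcrho)$ defined by $\gamma(t):=a+t(b-a)$: since $a$, $b-a\in\rcrho$ and $\gamma$ is a polynomial in $t$, it is certainly a GSF. Theorem \ref{thm:image} immediately yields $\gamma([0,1])\fcmp\rcrho$, so the only remaining task is to verify the set-theoretic identity $\gamma([0,1])=[a,b]$.

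The inclusion $\gamma([0,1])\subseteq[a,b]$ is short: from $a\le b$ we get $b-a\ge 0$, hence for $t\in[0,1]$ one has $0\le t(b-a)\le b-a$ and therefore $a\le\gamma(t)\le b$. The substantive direction is $[a,b]\subseteq\gamma([0,1])$. I first invoke the definition of $\le$ to fix representatives with $a_\eps\le b_\eps$ for all $\eps$. Then, given $x\in[a,b]$ with an arbitrary representative $(x_\eps)$, I pass to the clamped net $\tilde{x}_\eps:=\min(\max(x_\eps,a_\eps),b_\eps)\in[a_\eps,b_\eps]_\R$, and define $t_\eps:=(\tilde{x}_\eps-a_\eps)/(b_\eps-a_\eps)$ on the $\eps$ where $b_\eps>a_\eps$, setting $t_\eps:=0$ otherwise. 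By construction $t_\eps\in[0,1]_\R$, so $[t_\eps]\in[0,1]$, and $a_\eps+t_\eps(b_\eps-a_\eps)=\tilde{x}_\eps$ for every $\eps$, giving $\gamma([t_\eps])=[\tilde{x}_\eps]$.

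The crux is then to check that $[\tilde{x}_\eps]=x$ in $\rcrho$, i.e.~that the two defects $(a_\eps-x_\eps)_+$ and $(x_\eps-b_\eps)_+$ are $\rho$-negligible. Both are immediate consequences of the characterization of $\le$: from $a\le x$ one obtains a $\rho$-negligible net $(z_\eps)$ with $a_\eps\le x_\eps+z_\eps$, whence $(a_\eps-x_\eps)_+\le|z_\eps|$, and symmetrically for $(x_\eps-b_\eps)_+$. This is the only non-formal step, and—together with the mild bookkeeping needed on the $\eps$ where $b_\eps=a_\eps$ (on which the clamping automatically forces $\tilde{x}_\eps=a_\eps$, so the ad hoc choice $t_\eps:=0$ is consistent)—is where I expect the main, though modest, obstacle to lie. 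Note that the more naive attempt to define $t_\eps$ as the ratio directly from $(x_\eps)$ without clamping may fail to be moderate when $b-a$ is a nonzero but non-invertible element of $\rcrho$, which is precisely what the clamping circumvents.
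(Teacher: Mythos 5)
Your proposal is correct and follows exactly the route the paper intends: the corollary is stated there as an immediate consequence of Theorem \ref{thm:image} and Remark \ref{rem:defFunctCmpt}.\ref{enu:rem-defFunctCmpt-ordinaryCmpt}, i.e.\ $[a,b]$ is the image of the functionally compact set $[0,1]=\bigl[[0,1]_\R\bigr]$ under the affine GSF $t\mapsto a+t(b-a)$. You have merely made explicit the set-theoretic identity $\gamma([0,1])=[a,b]$ that the paper leaves implicit, and your clamping argument for the inclusion $[a,b]\subseteq\gamma([0,1])$ is sound.
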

\noindent Let us note that $a$, $b\in\Rtil$ can also be infinite,
e.g.~$a=[-\eps^{-N}]$, $b=[\eps^{-M}]$ or $a=[\eps^{-N}]$, $b=[\eps^{-M}]$
with $M>N$. Finally, in the following result we consider the product
of functionally compact sets: 
\begin{thm}
\noindent \label{thm:product}Let $K\fcmp\Rtil^{n}$ and $H\fcmp\Rtil^{d}$,
then $K\times H\fcmp\Rtil^{n+d}$. In particular, if $a_{i}\le b_{i}$
for $i=1,\ldots,n$, then $\prod_{i=1}^{n}[a_{i},b_{i}]\fcmp\Rtil^{n}$.
\end{thm}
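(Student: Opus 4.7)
The plan is to show that the natural representing net $(K_\eps \times H_\eps)$ witnesses the functional compactness of $K\times H$. Given $K = [K_\eps]\fcmp\Rtil^n$ and $H = [H_\eps]\fcmp\Rtil^d$, I want to verify that the three conditions \ref{enu:defFunctCmpt-internal}--\ref{enu:defFunctCmpt-cmpt} of Def.~\ref{def:functCmpt} hold for the net $(K_\eps \times H_\eps)$ of subsets of $\R^{n+d}$.

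First I would establish the set-theoretic equality $K\times H = [K_\eps \times H_\eps]$. For the inclusion ``$\subseteq$'', take $(x,y)\in K\times H$, choose representatives with $x_\eps \in K_\eps$ for $\eps$ small (exists because $x\in[K_\eps]$) and $y_\eps \in H_\eps$ for $\eps$ small, so that $(x_\eps,y_\eps)\in K_\eps\times H_\eps$ for $\eps$ small, giving $(x,y)\in[K_\eps\times H_\eps]$. For ``$\supseteq$'', any $[(x_\eps,y_\eps)]\in[K_\eps\times H_\eps]$ has a representative with $x_\eps\in K_\eps$ and $y_\eps\in H_\eps$ for $\eps$ small, so $[x_\eps]\in K$ and $[y_\eps]\in H$. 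This yields condition \ref{enu:defFunctCmpt-internal}.

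Next, for sharp boundedness (condition \ref{enu:defFunctCmpt-sharpBound}), I use that there exist $r$, $s\in\rcrho_{>0}$ with $K\subseteq B_r(0)$ and $H\subseteq B_s(0)$. Picking representatives $(r_\eps),(s_\eps)$ and setting $t_\eps := \sqrt{r_\eps^2+s_\eps^2}$, the net $(t_\eps)$ is $\rho$-moderate and defines an element $t\in\rcrho_{>0}$, and the Euclidean norm on $\R^{n+d}$ shows $K_\eps\times H_\eps \subseteq \Eball_{t_\eps}(0)$ for $\eps$ small, whence $[K_\eps\times H_\eps]\subseteq B_t(0)$. Condition \ref{enu:defFunctCmpt-cmpt} is immediate since a finite product of compact subsets of Euclidean spaces is compact in the product Euclidean space. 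This proves $K\times H\fcmp\Rtil^{n+d}$.

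For the ``in particular'' assertion, by Cor.~\ref{cor:intervalsFunctCmpt} each $[a_i,b_i]\fcmp\Rtil$, and a straightforward induction on $n$ using the first part of the theorem yields $\prod_{i=1}^n [a_i,b_i]\fcmp\Rtil^n$. I expect no real obstacle here: the only mildly delicate point is the verification of the equality $K\times H=[K_\eps\times H_\eps]$, which rests on selecting joint representatives consistent with the conditions $x_\eps\in K_\eps$ and $y_\eps\in H_\eps$ holding eventually; but this is immediate from the definition of the internal set $[K_\eps]$ and the fact that we may restrict to an intersection of two cofinal subsets of $I$ without loss of generality.
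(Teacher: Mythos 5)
The paper itself does not prove Thm.~\ref{thm:product}: it is recalled among the preliminaries on functionally compact sets, with proofs deferred to the cited references, so your proposal has to stand on its own. Its overall structure --- verifying the three conditions of Def.~\ref{def:functCmpt} for the net $(K_\eps\times H_\eps)$, with the set equality $K\times H=[K_\eps\times H_\eps]$ as the main content, and an induction via Cor.~\ref{cor:intervalsFunctCmpt} for the product of intervals --- is the natural one, and the verification of that equality and of the compactness condition are fine.

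One step, however, fails as written: the claim that $K_\eps\times H_\eps\subseteq \Eball_{t_\eps}(0)$ for $\eps$ small. With the paper's Def.~\ref{def:internalStronglyInternal}, sharp boundedness of the net $(K_\eps)$ means only that the \emph{internal set} $[K_\eps]$ is contained in some sharp ball $B_r(0)$; this constrains moderate nets $(x_\eps)$ with $x_\eps\in K_\eps$ eventually, and says nothing about non-moderate points of the sets $K_\eps$. For instance, $K_\eps:=\{0,\rho_\eps^{-1/\eps}\}$ is compact for every $\eps$ and generates the sharply bounded internal set $[K_\eps]=\{0\}$ (any moderate net eventually contained in $K_\eps$ must eventually equal $0$), yet $K_\eps\not\subseteq\Eball_{t_\eps}(0)$ for any moderate net $(t_\eps)$. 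A second, smaller defect: even when an $\eps$-wise inclusion $A_\eps\subseteq\Eball_{t_\eps}(0)$ does hold, it only yields $|x|\le t$ for $x\in[A_\eps]$, not $|x|<t$ in the sharp sense, since the gap $t-|x|$ need not be invertible; so the ``whence'' does not land you in the sharp ball $B_t(0)$. Both problems vanish if you argue directly at the level of generalized points, which is all the definition requires: having already proved $K\times H=[K_\eps\times H_\eps]$, take any $(x,y)\in K\times H$; then $r-|x|$ and $s-|y|$ are bounded below by invertible positive elements, hence so is $(r+s)-|(x,y)|\ge(r-|x|)+(s-|y|)$, giving $K\times H\subseteq B_{r+s}(0)$ and thus condition \ref{enu:defFunctCmpt-sharpBound}. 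With that one-line replacement the proof is complete; the concluding induction for $\prod_{i=1}^{n}[a_{i},b_{i}]$ is unobjectionable.
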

A theory of compactly supported GSF has been developed in \cite{GiKu16},
and it closely resembles the classical theory of LF-spaces of compactly
supported smooth functions. It establishes that for suitable functionally
compact subsets, the corresponding space of compactly supported GSF
contains extensions of all Colombeau generalized functions, and hence
also of all Schwartz distributions.

\section{\label{sec:Preliminary-results}Preliminary results for calculus
of variations with GSF}

In this section, we study extremal values of generalized functions
at sharply interior points of intervals $[a,b]\subseteq\rc$. As in
the classical calculus of variations, this will provide the basis
for proving necessary and sufficient conditions for general variational
problems. Since the new ring of scalars $\rc$ has zero divisors and
is not totally ordered, the following extension requires a more refined
analysis than in the classical case.

The following lemma shows that we can interchange integration and
differentiation while working with generalized functions. 
\begin{lem}
\label{lem:int}Let $a$, $b$, $c$, $d\in\rc$, with $a<b$ and
$c<d$. Let $f\in\gsf(X,Y)$ and assume that $[a,b]\times[c,d]\subseteq X\subseteq\rc^{2}$
and $Y\subseteq\rc^{d}$. Then for all $s\in[c,d]$,we have 
\begin{align}
\der{s}\int_{a}^{b}f(\tau,s)\,\dd\tau=\int_{a}^{b}\pdd{s}f(\tau,s)\,\dd\tau.\label{eq:derivUnderIntSign}
\end{align}
\end{lem}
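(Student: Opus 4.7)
My plan is to reduce the identity to the classical Leibniz rule applied $\eps$-wise to representatives, using Theorem \ref{thm:existenceUniquenessPrimitives} to identify the GSF integral and the Fermat-Reyes theorem (Theorem \ref{thm:FR-forGSF}) to identify the GSF derivative.

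Fix representatives $a=[a_\eps]$, $b=[b_\eps]$, $c=[c_\eps]$, $d=[d_\eps]$ and a net $f_\eps\in\cinfty(\Omega_\eps,\R^{d})$ defining $f$. The first step is to show that the map
\[
F:s\in[c,d]\longmapsto\int_{a}^{b}f(\tau,s)\,\dd\tau\in\rc^{d}
\]
is itself a GSF defined by the net $F_\eps(s):=\int_{a_\eps}^{b_\eps}f_\eps(\tau,s)\,\dd\tau$. Each $F_\eps$ is smooth in $s$ by the classical theorem on integrals depending on a parameter, and differentiation under the integral gives $F_\eps^{(k)}(s)=\int_{a_\eps}^{b_\eps}\partial_{s}^{k}f_\eps(\tau,s)\,\dd\tau$ for every $k\in\N$. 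That $F_\eps$ defines a GSF (i.e.\ that $[F_\eps(s_\eps)]$ is well-defined and that all derivative nets are $\rho$-moderate at every point $[s_\eps]\in[c,d]$) follows from Theorem \ref{thm:existenceUniquenessPrimitives} applied for each fixed $s$ to the one-variable GSF $\tau\mapsto\partial_{s}^{k}f(\tau,s)$: the resulting integral is a GSF in the endpoints, so $(\int_{a_\eps}^{b_\eps}\partial_{s}^{k}f_\eps(\tau,s_\eps)\,\dd\tau)$ is moderate for every $[s_\eps]\in[c,d]$, and independence of the representative is inherited from the integral.

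The second step is to compute $F'(s)$. By Theorem \ref{thm:FR-forGSF}\ref{enu:defDer}, at any $s$ in the sharp interior of $[c,d]$ one has $F'(s)=[F_\eps'(s_\eps)]$ for any representative $s=[s_\eps]$; at the endpoints one uses the one-sided derivatives $F'_{+}(c)$ and $F'_{-}(d)$ recalled just before Theorem \ref{thm:existenceUniquenessPrimitives}, which are again given $\eps$-wise by the corresponding one-sided derivatives of $F_\eps$. The classical Leibniz rule at the $\eps$-level then gives
\[
F_\eps'(s_\eps)=\int_{a_\eps}^{b_\eps}\pddd{f_\eps}{s}(\tau,s_\eps)\,\dd\tau,
\]
and the right-hand side, by Theorem \ref{thm:existenceUniquenessPrimitives}, is a defining representative of $\int_{a}^{b}\pddd{f}{s}(\tau,s)\,\dd\tau$. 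Passing to equivalence classes yields \eqref{eq:derivUnderIntSign}.

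The main technical obstacle is the moderateness verification in the first step: we need the net $(\partial_{s}^{k}f_\eps(\tau,s_\eps))$ to be $\rho$-moderate \emph{uniformly} in $\tau\in[a_\eps,b_\eps]$, so that its $\tau$-integral over the sharply bounded interval $[a_\eps,b_\eps]$ remains moderate. This uniformity is not immediate from the pointwise moderateness in Definition \ref{def:netDefMap}, but it is exactly the content built into the construction of the GSF integral in Theorem \ref{thm:existenceUniquenessPrimitives}, so once we recognize that $\int_{a}^{b}\partial_{s}^{k}f(\tau,s)\,\dd\tau\in\rc^{d}$ is meaningful for each $s\in[c,d]$, the required moderate estimates follow for free and the rest of the argument is purely formal.
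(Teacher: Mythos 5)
Your proposal is correct and follows essentially the same route as the paper: both proofs reduce \eqref{eq:derivUnderIntSign} to the classical $\eps$-wise Leibniz rule after first checking that $s\mapsto\int_a^b f(\tau,s)\,\dd\tau$ is a GSF, and then identify the two sides via Thm.~\ref{thm:FR-forGSF} and Thm.~\ref{thm:existenceUniquenessPrimitives}. The only (immaterial) difference is in the moderateness step: the paper bounds $\left|\int_{a_\eps}^{b_\eps}\partial_\sigma^n f_\eps(\tau,\sigma_\eps)\,\dd\tau\right|$ explicitly by $\rho_\eps^{-N}(b_\eps-a_\eps)$ using the extreme value theorem \ref{thm:extremeValues} on the functionally compact set $[a,b]\times\{s\}$, whereas you obtain the same moderateness by invoking Thm.~\ref{thm:existenceUniquenessPrimitives} applied to the GSF $\tau\mapsto\partial_s^k f(\tau,s)$ for each fixed $s$, which is an equally valid justification.
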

\begin{proof}
We first note that $f(\cdot,s)\in\gsf([a,b],Y)$ by the closure of
GSF with respect to composition. Therefore, $\frac{\partial}{\partial s}f(\cdot,s)\in\gsf([a,b],\rc^{d})$,
and the right hand side of \eqref{eq:derivUnderIntSign} is well defined
as an integral of a GSF. In order to show that also the left hand side
of \eqref{eq:derivUnderIntSign} is well-defined, we need to prove
that also $\sigma\in[c,d]\mapsto\int_{a}^{b}f(\tau,\sigma)\diff{\tau}\in\rc^{d}$
is a GSF. Let $f$ be defined by the net $f_{\ve}\in\mathcal{C}^{\infty}\left(\Omega_{\ve},\R^{d}\right)$,
with $X\subseteq\left\langle \Omega_{\ve}\right\rangle $. Let $[\sigma_{\eps}]\in[c,d]$,
then $[a,b]\times\{[\sigma_{\eps}]\}\fcmp\rc^{2}$ and the extreme
value theorem \ref{thm:extremeValues} applied to $\frac{\partial^{n}f}{\partial\sigma^{n}}$
yields the existence of $N\in\R_{>0}$ such that 
\begin{align*}
\left|\frac{\text{d}^{n}}{\diff{\sigma}^{n}}\int_{a_\varepsilon}^{b_\varepsilon}f_{\ve}(\tau,\sigma_{\eps})\,\dd\tau\right|\le\int_{a_\varepsilon}^{b_\varepsilon}\left|\frac{\partial^{n}}{\partial\sigma^{n}}f_{\ve}(\tau,\sigma_{\eps})\right|\,\dd\tau\le\rho_{\eps}^{-N}\cdot(b_\varepsilon-a_\varepsilon).
\end{align*}
This proves that also the left hand side of \eqref{eq:derivUnderIntSign}
is well-defined as a derivative of a GSF. From the classical derivation
under the integral sign, the Fermat-Reyes theorem \ref{thm:FR-forGSF},
and Thm.~ \ref{thm:existenceUniquenessPrimitives} about definite
integrals of GSF, we obtain
\begin{align*}
\der{s}\int_{a}^{b}f(\tau,s)\,\dd\tau & =\frac{\text{d}}{\diff{s}}\int_{a}^{b}\left[f_{\eps}(\tau,s)\right]\,\diff{\tau}\\
 & =\frac{\text{d}}{\diff{s}}\left[\int_{a_{\eps}}^{b_{\eps}}f_{\eps}(\tau,s)\,\diff{\tau}\right]\\
 & =\left[\frac{\text{d}}{\diff{s}}\int_{a_{\eps}}^{b_{\eps}}f_{\eps}(\tau,s)\,\diff{\tau}\right]\\
 & =\int_{a}^{b}\left[\pdd{s}f_{\ve}(\tau,s)\right]\,\dd\tau\\
 & =\int_{a}^{b}\pdd{s}f(\tau,s)\,\dd\tau.
\end{align*}
\end{proof}
The next result will frequently be used in the following
\begin{lem}
\label{lem:permSignLimit}Let $(D,\ge)$ be a directed set and let
$f:D\ra\rc$ be a set-theoretical map such that $f(d)\ge0$ for all
$d\in D$, and $\exists\lim_{d\in D}f(d)\in\rc$ in the sharp topology.
Then $\lim_{d\in D}f(d)\ge0$.
\end{lem}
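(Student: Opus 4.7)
The plan is to reduce the statement to the general topological principle that a convergent net contained in a closed subset of a topological space has its limit in that subset; the ambient space is $\rc$ with the sharp topology, and the closed subset is the non-negative cone $\rc_{\ge 0}:=\{x\in\rc\mid x\ge 0\}$. The only nontrivial step is then to identify $\rc_{\ge 0}$ as an internal set. Unwinding the definition of $\le$ from Def.~\ref{def:RCGN}, $x=[x_\eps]\ge 0$ is equivalent to the existence of a representative which is $\ge 0$ for $\eps$ small, and hence equivalently to $\forall q\in\R_{>0}\,\forall^0\eps:\ d(x_\eps,[0,\infty))\le\rho_\eps^q$ for any representative. By Thm.~\ref{thm:strongMembershipAndDistanceComplement}.\ref{enu:internalSetsDistance} this is precisely the condition that $[x_\eps]$ belongs to the internal set generated by the constant net $A_\eps:=[0,\infty)\subseteq\R$, and Thm.~\ref{thm:strongMembershipAndDistanceComplement}.\ref{enu:internalAreClosed} then yields that this set is sharply closed.

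Having shown that $\rc_{\ge 0}$ is sharply closed, I would set $L:=\lim_{d\in D}f(d)$ and argue by contradiction. If $L\notin\rc_{\ge 0}$, then the complement $\rc\setminus\rc_{\ge 0}$ is a sharp-open neighborhood of $L$; by the very definition of convergence of a net in a topological space there would exist $d_0\in D$ such that $f(d)\in\rc\setminus\rc_{\ge 0}$ for every $d\ge d_0$, contradicting the hypothesis $f(d_0)\ge 0$. Therefore $L\ge 0$.

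The only point requiring real care is the identification of $\rc_{\ge 0}$ as an internal set. Because the order on $\rc$ is partial and ``$x\not\ge 0$'' is not the same as ``$x<0$'', there is a natural temptation to argue $\eps$-wise on representatives, precisely the kind of proof the paper's framework is designed to avoid. Recognising the non-negative cone as an internal set removes this temptation and makes the remainder pure general topology; in particular, no structural assumption on the directed set $D$ is used beyond the definition of net convergence in a topological space.
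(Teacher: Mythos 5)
Your proof is correct and follows essentially the same route as the paper's (one-line) proof: identify $\rc_{\ge0}$ with the internal set $\left[[0,+\infty)_{\R}\right]$, invoke Thm.~\ref{thm:strongMembershipAndDistanceComplement}.\ref{enu:internalAreClosed} for sharp closedness, and conclude by the standard fact that a convergent net in a closed set has its limit in that set. You merely make explicit the identification of the cone as an internal set and the net-convergence step, both of which the paper leaves implicit.
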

\begin{proof}
Note that the internal set $[0,+\infty)=\left[[0,+\infty)_{\R}\right]$ is sharply
closed by Thm.~\ref{thm:strongMembershipAndDistanceComplement}.\ref{enu:internalAreClosed}.
%Set $l:=\lim_{d\in D}f(d)\in\rc$, and assume that $l\in\rc\setminus[0,+\infty)$.
%The internal set $[0,+\infty)=\left[[0,+\infty)_{\R}\right]$ is sharply
%closed by Thm.~\ref{thm:strongMembershipAndDistanceComplement}.\ref{enu:internalAreClosed},
%hence there exists $a\in\R_{>0}$ such that $B_{\text{d}\rho^{a}}(l)\subseteq\rcrho\setminus[0,+\infty)$.
%By the definition of limit in the sharp topology $\exists\delta\in D\,\forall d\in D_{\ge\delta}:\ f(d)\in B_{\text{d}\rho^{a}}(l)$.
%This implies $f(\delta)\notin[0,+\infty)$, a contradiction.
\end{proof}
\clearpage
\begin{rem}
\label{rem:limits}~

\begin{enumerate}
\item \label{enu:def_geq_zero}If $x\in\rc$, then $x\ge0$ if and only
if $\exists A\in\R_{>0}\,\forall a\in\R_{>A}\colon x\geq-\dd\rho^{a}.$
Indeed, it suffices to let $a\to+\infty$ in $f(a)=x+\diff{\rho}^{a}$.
\item \label{enu:x_is_zero}Assume that $x$, $y\in\rc^{n}$ and
\[
\exists s_{0}\in\rc_{>0}\,\forall s\in\rc_{>0}:\ s\leq s_{0}\ \Rightarrow\ |x|\leq s|y|.
\]
Then taking $s\to0$ in $f(s)=s|y|-|x|$ we get $x=0$.
\end{enumerate}
\end{rem}
\begin{defn}
We call $x=(x_{1},\ldots,x_{d})\in\rc^{d}$ \emph{componentwise invertible}
if and only if for all $k\in\{1,\ldots,d\}$ we have that $x_{k}\in\rc$
is invertible.
\end{defn}
\begin{lem}
\label{lem:componentWisePos}Let $f\in\gsf(U,Y)$ where $Y\subseteq\rc$
and $U\subseteq\rc^{d}$ is a sharply open subset. Then $f\geq0$
if and only if $f(x)\geq0$ for all componentwise invertible $x\in U$. 
\end{lem}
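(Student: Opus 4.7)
The forward implication is immediate. For the converse, the plan is to approximate an arbitrary $y\in U$ by componentwise invertible points, on which $f\ge 0$ by hypothesis, and then pass to the limit using sharp continuity of $f$ (Thm.~\ref{thm:propGSF}.\ref{enu:GSF-cont}) together with Lemma~\ref{lem:permSignLimit}. Since $U$ is sharply open, there exists $r_{0}\in\rc_{>0}$ with $B_{r_{0}}(y)\subseteq U$, so it suffices to produce, for every $s\in\rc_{>0}$ with $s\le r_{0}$, a componentwise invertible point $x_{s}\in B_{s}(y)$: then $f(x_{s})\ge 0$ by hypothesis, and directing $\rc_{>0}\cap(0,r_{0}]$ by reverse inequality yields a net $s\mapsto f(x_{s})$ converging sharply to $f(y)$, to which Lemma~\ref{lem:permSignLimit} applies.

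The crux is therefore a density statement: componentwise invertible points are sharply dense in $\rc^{d}$. Given $y=(y_{1},\ldots,y_{d})\in\rc^{d}$ and $s\in\rc_{>0}$, I would fix representatives $(y_{k,\eps})$ of each component, set $\alpha:=s/(2\sqrt{d})\in\rc_{>0}$ with representative $(\alpha_{\eps})$, and define
\[
z_{k,\eps}:=y_{k,\eps}+\alpha_{\eps}\,\mathrm{sgn}(y_{k,\eps}),\qquad k=1,\ldots,d,
\]
with the convention $\mathrm{sgn}(0):=1$. Each $(z_{k,\eps})$ is a moderate net, so $z_{k}:=[z_{k,\eps}]\in\rc$, and by construction $|z_{k,\eps}|=|y_{k,\eps}|+\alpha_{\eps}\ge\alpha_{\eps}$. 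Applying Lemma~\ref{lem:mayer} to this particular representative of $|z_{k}|$ yields $m\in\N$ with $\alpha_{\eps}>\rho_{\eps}^{m}$ eventually, which certifies that $|z_{k}|$ (and hence $z_{k}$) is invertible. The estimate $|z-y|=s/2<s$ then follows by a direct computation in $\rc_{>0}$, giving the desired point $x_{s}:=z$.

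The main subtlety is that the sign function is not compatible with the equivalence $\sim_{\rho}$, so the perturbation $z$ depends on the chosen representative of $y$. This causes no harm, because invertibility is an intrinsic property of the element $z_{k}$: it suffices to verify the lower bound $|z_{k,\eps}|\ge\alpha_{\eps}$ on the one representative built above, and the equivalence (i)$\Leftrightarrow$(iii) of Lemma~\ref{lem:mayer} then propagates the conclusion to every representative automatically. This representative-dependent construction is the step I expect to require the most care; everything else (openness, sharp continuity, and the sign-preservation of limits) is essentially bookkeeping built on results already in the preliminaries.
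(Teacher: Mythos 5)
Your proposal is correct and takes essentially the same route as the paper's proof: density of the componentwise invertible points in $U$ (which the paper attributes to Lemma~\ref{lem:mayer} without detail), followed by sharp continuity of $f$ (Thm.~\ref{thm:propGSF}.\ref{enu:GSF-cont}) and the sign-preservation Lemma~\ref{lem:permSignLimit}. Your explicit sign-perturbation construction merely fills in the density step the paper leaves implicit, and your handling of the representative-dependence there is sound.
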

\begin{proof}
By Lem.~\ref{lem:mayer}, it follows that for $V\subseteq\rc$, the
set of invertible points in $V,$ i.e.~$V\cap\rc^{*}\subseteq V$
is dense in $V$ (with respect to the sharp topology). This implies
that $U\cap(\rc^{d})^{*}\subseteq U$ is dense. By Thm.~\ref{thm:propGSF}.\ref{enu:GSF-cont}, $f$
is sharply continuous, so Lem.~ \ref{lem:permSignLimit} yields that $f(x)\geq 0$. The other direction is obvious.
%Let $f(x)\geq0$ for
%all componentwise invertible elements in $U$. Let $x_{n}\to x$ be
%a sequence of componentwise invertible points, converging to a given
%$x\in U$. Thus $f(x_{n})\geq-\diff{\rho}^{2a}$ for all $n$ and
%all $a\in\R_{>0}$ by Rem\@.~\ref{rem:limits}.\ref{enu:def_geq_zero}.
%But Thm.~\ref{thm:propGSF}.\ref{enu:GSF-cont} yields that $f$
%is sharply continuous, therefore $\exists N\in\N$ such that $\forall n\geq N\colon\ -\dd\rho^{3a}\leq f(x)-f(x_{n})\leq\dd\rho^{3a}$.
%It follows that $f(x)\geq f(x_{n})-\dd\rho^{3a}\geq-\dd\rho^{2a}-\dd\rho^{3a}\ge-\diff{\rho}^{a}$
%for all $a\in\R_{>0}$. Therefore, by Rem.~\ref{rem:limits}.\ref{enu:def_geq_zero}
%we have $f(x)\geq0$. The other direction is obvious. 
\end{proof}
Analogously to the classical case, we say that $x_{0}\in X$ is a
local minimum of $f\in\gsf(X)$ if there exists a sharply open neighbourhood
(in the trace topology) $Y\subseteq X$ of $x_{0}$ such that $f(x_{0})\leq f(y)$
for all $y\in Y$. A local maximum is defined accordingly. We will
write $f(x_{0})=\text{min}!$, which is a short hand notation to denote
that $x_{0}$ is a (local) minimum of $f$.
\begin{lem}
\label{lem:local_min_diff}Let $X\subseteq\rc$ and let $f\in\gsf(X,\rc)$.
If $x_{0}\in X$ is a sharply interior local minimum of $f$ then
$f'(x_{0})=0$. 
\end{lem}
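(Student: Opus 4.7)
My plan is to run the classical first-order necessary condition---Taylor's formula plus a ``plug in $\pm h$'' sign trick---inside the sharp-topology/non-Archimedean setting, cancelling the invertible factor at the end via Rem.~\ref{rem:limits}.\ref{enu:x_is_zero}.

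Since $x_0$ is sharply interior and a local minimum, pick $s\in\rc_{>0}$ with $B_s(x_0)\subseteq X$ and $f(y)\ge f(x_0)$ for all $y\in B_s(x_0)$, then fix $s_1\in\rc_{>0}$ with $s_1<s$ (e.g.\ $s_1:=s/2$) so that the interval $[x_0-s_1,x_0+s_1]\subseteq B_s(x_0)$. By Cor.~\ref{cor:intervalsFunctCmpt} this interval is functionally compact, and since $f''\in\gsf(B_s(x_0),\rc)$ by iterated Fermat--Reyes (Thm.~\ref{thm:FR-forGSF}), Thm.~\ref{thm:extremeValues} applied to $|f''|$ yields $M\in\rc$ with $M\ge 0$ and $|f''(\xi)|\le M$ for every $\xi\in[x_0-s_1,x_0+s_1]$.

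For any $h\in\rc$ with $|h|<s_1$, both segments $[x_0,x_0+h]$ and $[x_0-h,x_0]$ lie in $B_{s_1}(x_0)\subseteq[x_0-s_1,x_0+s_1]$, so Thm.~\ref{thm:Taylor}, formula \eqref{eq:lagrange} with $n=1$, produces points $\xi_\pm$ with
\[
0\le f(x_0\pm h)-f(x_0)=\pm f'(x_0)\,h+\tfrac{1}{2}f''(\xi_\pm)\,h^2.
\]
Using $|f''(\xi_\pm)|\le M$ together with $h^2\ge 0$, this gives simultaneously $f'(x_0)h\ge-\tfrac{M}{2}h^2$ and $-f'(x_0)h\ge-\tfrac{M}{2}h^2$, which combine, via the rule ``$a\le b$ and $-a\le b$ imply $|a|\le b$'' recalled in Section~\ref{sec:Basic-notions}, into
\[
|f'(x_0)|\cdot|h|=|f'(x_0)\,h|\le\tfrac{M}{2}h^2\qquad\text{for every }h\in\rc,\ |h|<s_1.
\]

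Finally, specializing to $h=t\in\rc_{>0}$ with $t<s_1$, the invertibility of $t$ allows me to divide, giving $|f'(x_0)|\le\tfrac{M}{2}\,t$. This is precisely the hypothesis of Rem.~\ref{rem:limits}.\ref{enu:x_is_zero} with $y:=M/2$ and threshold $s_0:=s_1/2$ (note $s\le s_0$ forces $s<s_1$ strictly in the sharp order, since $s_1-s\ge s_1/2\in\rc_{>0}$), so that remark forces $f'(x_0)=0$ in $\rc$. The main subtlety---and the step needing the most care---is this last cancellation: because $\rc$ is only partially ordered and has zero divisors, one cannot simply pick a sign of $h$ to contradict a putative $f'(x_0)\neq 0$ as in the classical proof. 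This forces the symmetric Taylor trick at $\pm h$, and the cancellation must be performed with an \emph{invertible} $h$, which is exactly the setting in which Rem.~\ref{rem:limits}.\ref{enu:x_is_zero} delivers the equality $f'(x_0)=0$ from an $s$-parametrized family of sharp inequalities.
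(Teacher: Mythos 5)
Your proof is correct and follows essentially the same route as the paper's: Taylor's formula \eqref{eq:lagrange} with $n=1$ at $x_0\pm h$, a bound $M$ on $|f''|$ obtained from the extreme value theorem on a functionally compact neighbourhood, the combination of the two sign choices into $|f'(x_0)|\le\frac{M}{2}|h|$, and the cancellation of the invertible $|h|$ via Rem.~\ref{rem:limits}.\ref{enu:x_is_zero}. The only cosmetic difference is that the paper treats $x>0$ and $x<0$ as two cases rather than plugging in $\pm h$ for a single invertible $h>0$, which changes nothing.
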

\begin{proof}
Without loss of generality, we can assume $x_{0}=0$, because of the
closure of GSF with respect to composition. Let $r\in\rc_{>0}$ be
such that $B_{2r}(0)=:U\subseteq X$ and $f(0)=\text{min!}$ over
$U$. Take any $x\in\rc$ such that $0<|x|<r$, so that $[-|x|,|x|]\subseteq U$.
Thus, if $x>0$, by Taylor's theorem \ref{thm:Taylor} there exists
$\xi\in[0,x]$ such that 
\[
f(x)=f(0)+f'(0)\cdot x+\frac{f''(\xi)}{2}\cdot x^{2}.
\]
Set $K:=[B_{r_{\eps}}(0)]\fcmp B_{2r}(0)\subseteq U$ and $M:=\max_{x\in K}\left|f''(x)\right|\in\rc_{\ge0}$.
Due to the fact that $f(0)$ is minimal, we have
\begin{equation}
f'(0)\cdot x+\frac{f''(\xi)}{2}\cdot x^{2}=f(x)-f(0)\geq0.\label{eq:Df}
\end{equation}
Thus $-f'(0)\cdot x\le\frac{M}{2}x^{2}$ and $-f'(0)\le\frac{M}{2}|x|$
since $x>0$. Analogously, if we take $x<0$, we get $f'(0)\le-\frac{M}{2}x=\frac{M}{2}|x|$.
Therefore, $\left|f'(0)\right|\le\frac{M}{2}|x|$ and the conclusion
follows by Rem.~\ref{rem:limits}.\ref{enu:x_is_zero}.
\end{proof}
As a corollary of Lem.~\ref{lem:approxOfBoundaryPointsWithInterior}
and Thm.~\ref{thm:propGSF}.\ref{enu:GSF-cont}, we have
\begin{lem}
\label{lem:f_zero_on_sharp_int}Let $a$, $b\in\rc$ with $a<b$ and
let $f\in\gsf([a,b],\rc^{d})$ such that $f(x)=0$ for all sharply
interior points $x\in[a,b]$. Then $f=0$ on $[a,b]$. 
\end{lem}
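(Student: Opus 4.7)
The plan is to recast the conclusion as the statement that the zero locus $Z := \{x\in[a,b]\mid f(x)=0\}=f^{-1}(\{0\})$ equals all of $[a,b]$. The hypothesis already gives $\text{int}([a,b])\subseteq Z$, so it suffices to show that $Z$ is sharply closed in $[a,b]$ and then invoke the density of the sharp interior.

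The first step is to observe that the singleton $\{0\}\subseteq\rc^{d}$ is the internal set generated by the constant net $A_{\eps}=\{0\}\subseteq\R^{d}$. Indeed, $[x_{\eps}]\in[\{0\}]$ iff there is a representative with $x_{\eps}=0$ for $\eps$ small, which forces $[x_{\eps}]=0$, and conversely $0$ admits the representative identically equal to $0$. Hence by Thm.~\ref{thm:strongMembershipAndDistanceComplement}.\ref{enu:internalAreClosed}, $\{0\}$ is sharply closed in $\rc^{d}$. Combining this with the sharp continuity of $f$ (Thm.~\ref{thm:propGSF}.\ref{enu:GSF-cont}), $Z=f^{-1}(\{0\})$ is sharply closed in $[a,b]$ with the trace of the sharp topology.

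The second step is to apply Lem.~\ref{lem:approxOfBoundaryPointsWithInterior}, which yields that $\text{int}([a,b])$ is sharply dense in $[a,b]$. Since the sharply closed set $Z$ contains this dense subset, we conclude $Z=[a,b]$, i.e.\ $f\equiv 0$ on $[a,b]$. No serious obstacle is expected: the only potentially delicate issue is that the sharp topology interacts subtly with the zero divisors of $\rc$, but this is neatly sidestepped by exploiting the general closedness of internal sets rather than attempting a direct $\eps$-wise estimate.
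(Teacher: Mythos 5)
Your proposal is correct and follows essentially the same route as the paper, which derives this lemma precisely as a corollary of Lem.~\ref{lem:approxOfBoundaryPointsWithInterior} (density of the sharp interior) and Thm.~\ref{thm:propGSF}.\ref{enu:GSF-cont} (sharp continuity). Your additional observation that $\{0\}=[\{0\}]$ is an internal set, hence sharply closed by Thm.~\ref{thm:strongMembershipAndDistanceComplement}.\ref{enu:internalAreClosed}, is a correct way of making the continuity-plus-density argument fully explicit.
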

\noindent Now, we are able to prove the ``second - derivative - test''
for GSF. 
\begin{lem}
\label{lem:second_deriv_test}Let $a$, $b\in\rc$ with $a<b$ and
let $f\in\gsf([a,b],\rc)$ such that $f(x_{0})=\text{min!}$ for some
sharply interior $x_{0}\in[a,b]$. Then $f''(x_{0})\geq0$. Vice versa,
if $f'(x_{0})=0$ and $f''(x_{0})>0$, then $f(x_{0})=\text{min!}$.
\end{lem}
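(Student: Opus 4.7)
The plan is to apply Taylor's theorem (Theorem \ref{thm:Taylor}) at $x_0$ up to order two and to use the local Lipschitz continuity of $f''$ from Theorem \ref{thm:propGSF}.\ref{enu:locLipSharp} to transfer inequalities from nearby points back to $x_0$.

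For the necessary direction ($f(x_{0})=\text{min!}$ implies $f''(x_0)\geq 0$), I would first apply Lemma \ref{lem:local_min_diff} to obtain $f'(x_0)=0$. Since $x_0$ is sharply interior, I would fix a sharp neighborhood $V=B_r(x_0)\subseteq[a,b]$ on which both $f(x_0)\leq f(\cdot)$ and $f''$ is $L$-Lipschitz for some $L\in\rc_{\geq 0}$. For any $x\in V$ with $x-x_0$ invertible, Theorem \ref{thm:Taylor} provides $\xi\in[x_0,x]\subseteq V$ such that
\[
0\leq f(x)-f(x_0)=\frac{f''(\xi)}{2}(x-x_0)^{2},
\]
and since $(x-x_0)^{2}$ is an invertible nonnegative element of $\rc$, I may divide to conclude $f''(\xi)\geq 0$. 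The Lipschitz estimate $|f''(\xi)-f''(x_0)|\leq L|\xi-x_0|\leq L|x-x_0|$ then yields $f''(x_0)\geq -L|x-x_0|$. Specializing to $x=x_0+\dd\rho^{\,a}$ for $a\in\R_{>0}$ arbitrarily large (which lies in $V$ with $x-x_0$ invertible once $a$ is large enough), and using that $L$ is $\rho$-moderate (so $|L|\leq\dd\rho^{-m}$ for some $m\in\N$), I obtain $f''(x_0)\geq -\dd\rho^{\,a-m}$ for all sufficiently large $a$. Remark \ref{rem:limits}.\ref{enu:def_geq_zero} then delivers $f''(x_0)\geq 0$.

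For the sufficient direction, the assumption $f''(x_0)>0$ combined with the sharp continuity of $f''$ (Theorem \ref{thm:propGSF}.\ref{enu:GSF-cont}) and the sharp openness of $\{y\in\rc\mid y>0\}$, which follows at once from Lemma \ref{lem:mayer}, produces a sharp neighborhood $U\subseteq[a,b]$ of $x_0$ on which $f''(x)>0$ throughout. Shrinking $U$ to a sharp ball around $x_0$ (which is convex in the sense that the segment $[x_0,x]$ lies in $U$ for every $x\in U$), Taylor's theorem together with $f'(x_0)=0$ gives
\[
f(x)-f(x_0)=\frac{f''(\xi)}{2}(x-x_0)^{2}\geq 0
\]
for some $\xi\in[x_0,x]\subseteq U$, since $f''(\xi)\geq 0$, $(x-x_0)^{2}\geq 0$, and products of nonnegative elements of $\rc$ are nonnegative. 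Hence $f(x_0)=\text{min!}$ on $U$.

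The main obstacle is the passage from $f''(\xi)\geq 0$ (with $\xi$ depending on $x$) to $f''(x_0)\geq 0$ in the necessary direction. Because $\leq$ on $\rc$ is not total, a naive limit $x\to x_0$ is unavailable; one must combine the Lipschitz bound on $f''$ with the characterization of nonnegativity by invertible infinitesimals $\dd\rho^{\,a}$ furnished by Remark \ref{rem:limits}.\ref{enu:def_geq_zero}. The restriction to invertible increments $x-x_0$ in the Taylor step, required to cancel the factor $(x-x_0)^{2}$, is what ultimately forces this more delicate limiting procedure.
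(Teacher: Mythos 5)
Your proof is correct, but it takes a genuinely different route from the paper's, most visibly in the necessary direction. The paper expands $f$ to \emph{third} order, bounds $M:=\max_K|f'''|$ on a functionally compact ball via the extreme value theorem, and then chooses the specific increment $x=\dd\rho^{a/3}$ so that after comparison with $x^2$ both $f''(0)$ and the error term $\tfrac{M}{3}x$ are controlled, concluding with Lem.~\ref{lem:permSignLimit}. You instead stop at second order, use the invertibility of $(x-x_0)^2$ to divide and get $f''(\xi)\ge 0$ exactly, and then transport this back to $x_0$ via the local Lipschitz property of $f''$ from Thm.~\ref{thm:propGSF}.\ref{enu:locLipSharp}, the moderateness of the Lipschitz constant, and Rem.~\ref{rem:limits}.\ref{enu:def_geq_zero}. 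The two arguments encode the same underlying information (a moderate bound on third-order behaviour near $x_0$), but yours avoids the ad hoc exponent $a/3$ and isolates more transparently the one genuinely non-classical step, namely passing from $f''(\xi)\ge 0$ to $f''(x_0)\ge 0$ in a ring where $\le$ is not total. In the sufficient direction you replace the paper's explicit estimate $f(x)-f(0)\ge x^2\tfrac14\dd\rho^a$ (obtained by bounding $\tfrac16 f'''(\xi_x)x$ against $\tfrac14\dd\rho^a$) by the softer observation that $\{y\in\rc\mid y>0\}$ is sharply open, so $f''>0$ persists on a sharp ball and the second-order Lagrange remainder is a product of nonnegatives. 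Both versions are valid; the paper's yields a quantitative lower bound on $f(x)-f(x_0)$, while yours is shorter and dispenses with the third derivative entirely.
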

\begin{proof}
As above, we can assume $x_{0}=0$. Let $r\in\rc_{>0}$ be such that
$B_{2r}(0)=:U\subseteq X$ and $f(0)=\text{min!}$ over $U$. Take
any $x\in\rc$ such that $0<x<r$, so that $[0,x]\subseteq U$, and
set $K:=[B_{r_{\eps}}(0)]\fcmp B_{2r}(0)\subseteq U$ and $M:=\max_{x\in K}\left|f'''(x)\right|\in\rc_{\ge0}$.
By Taylor's theorem \ref{thm:Taylor}, we obtain that for some $\xi\in[0,x]$
\begin{align*}
f(x)=f(0)+f'(0)x+\frac{1}{2}f''(0)x^{2}+\frac{1}{6}f'''(\xi)x^{3}.
\end{align*}
By assumption, we have that for all $a\in\R_{>0}$ 
\begin{align*}
0\leq f(x)-f(0)+\dd\rho^{a}.
\end{align*}
By Lemma \ref{lem:local_min_diff}, we know that $f'(0)=0$. Thus,
we obtain for all $a\in\R_{>0}$ 
\begin{align*}
f(x)-f(0)=\frac{1}{2}f''(0)x^{2}+\frac{1}{6}f'''(\xi)x^{3}\geq-\dd\rho^{a}.
\end{align*}
Therefore, also $\frac{1}{2}f''(0)x^{2}+\frac{1}{6}Mx^{3}\geq-\dd\rho^{a}$.
In this inequality we can set $x=\diff{\rho}^{a/3}$, assuming that
$a>A$ and $\diff{\rho}^{A}<r$. We get $f''(0)\ge-\left(2+\frac{M}{3}\right)\diff{\rho}^{a/3}$,
and the conclusion follows from Lem.~\ref{lem:permSignLimit} as
$a\to+\infty$.

Now assume that $f'(0)=0$ and $f''(0)>0$, so that $f''(0)>\dd\rho^{a}$
for some $a\in\R_{>0}$ by Lem.~\ref{lem:mayer}. Since $f'(0)=0$,
for all $x\in B_{r}(0)$, Taylor's formula gives
\[
f(x)-f(0)=\frac{1}{2}f''(0)x^{2}+\frac{1}{6}f'''(\xi_{x})x^{3},
\]
where $\xi_{x}\in[0,x]$. Therefore, $f(x)-f(0)>x^{2}\left(\frac{1}{2}\dd\rho^{a}+\frac{1}{6}f'''(\xi_{x})x\right)$.
Now
\[
\left|\frac{1}{6}f'''(\xi_{x})x\right|\le\frac{1}{6}M|x|\to0\ \text{as }x\to0.
\]
Thus
\[
\exists s\in\rc_{>0}:\ s<r,\ \forall x\in B_{s}(0):\ -\frac{1}{4}\dd\rho^{a}<\frac{1}{6}f'''(\xi_{x})x<\frac{1}{4}\dd\rho^{a}.
\]
We can hence write $f(x)-f(0)>x^{2}\left(\frac{1}{2}\dd\rho^{a}-\frac{1}{4}\dd\rho^{a}\right)=x^{2}\frac{1}{4}\dd\rho^{a}\ge0$
for all $x\in B_{s}(0)$, which proves that $x=0$ is a local minimum.
\end{proof}
\noindent For the generalization of Lem.~\ref{lem:local_min_diff}
and Lem.~\ref{lem:second_deriv_test} to the multivariate case, one
can proceed as above, using the ideas of \cite{KKO:08}. Note,
however, that we do not need this generalization in the present work.

\section{\label{sec:First-variation}First variation and critical points}

In this section, we define the first variation of a functional and
prove that some classical results have their counterparts in this
generalized setting, for example the fundamental lemma (Lem\@.~\ref{lem:fund_lem_calc_var})
or the connection between critical points and the Euler-Lagrange equations
(Thm.~\ref{thm:ELE}).
\begin{defn}
\label{def:gsf_0}If $a$, $b\in\rc$ and $a<b$, we define
\[
\gsf_{0}(a,b):=\left\{ \eta\in\gsf(\rc,\rc^{d})\colon\eta(a)=0=\eta(b)\right\} .
\]
When the use of the points $a$, $b$ is clear from the context, we
adopt the simplified notation $\gsf_{0}$. We also note here that
$\gsf_{0}(a,b)$ is an $\rc$-module.

\noindent One of the positive features of the use of GSF for the calculus
of variations is their closure with respect to composition. For this
reason, the next definition of functional is formally equal to the
classical one, though it can be applied to arbitrary generalized functions
$F$ and $u$.
\end{defn}
\begin{thm}
Let $a$, $b\in\rc$ with $a<b$. Let $u\in\gsf([a,b],\rc^{d})$ and
let $F\in\gsf([a,b]\times\rc^{d}\times\rc^{d},\rc)$ and define 
\begin{align}
I(u):=\int_{a}^{b}F(t,u,\dot{u})\,\dd t.\label{eq:def_I}
\end{align}
Let $\eta\in\gsf_{0}$, then
\[
\delta I(u;\eta):=\left.\der{s}I(u+s\eta)\right|_{s=0}=\int_{a}^{b}\eta\left(F_{u}(t,u,\dot{u})-\der{t}F_{\dot{u}}(t,u,\dot{u})\right)\,\dd t.
\]
\end{thm}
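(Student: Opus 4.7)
The plan is to mimic the classical derivation of the first variation, using the GSF calculus developed in Section~\ref{sec:Basic-notions} to justify each step: differentiation under the integral sign, the chain rule, and integration by parts.

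First, I would set $g(s):=I(u+s\eta)=\int_{a}^{b}F\bigl(t,u(t)+s\eta(t),\dot u(t)+s\dot\eta(t)\bigr)\,\dd t$ and observe that the integrand $H(t,s):=F(t,u(t)+s\eta(t),\dot u(t)+s\dot\eta(t))$ is a GSF of $(t,s)\in[a,b]\times\rc$, since $F$, $u$, $\eta$ and their derivatives are GSF and $\gsf$ is closed under composition (Thm.~\ref{thm:propGSF}.\ref{enu:category}) and componentwise operations. In particular the integral defining $g(s)$ exists as in Thm.~\ref{thm:existenceUniquenessPrimitives} for every $s\in\rc$, so $g\in\gsf(\rc,\rc)$.

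Next, I would apply Lem.~\ref{lem:int} with $H$ in place of $f$ to interchange $\der{s}$ and $\int_a^b$, and then the chain rule of Thm.~\ref{thm:rulesDer}.(v) to obtain
\begin{align*}
\der{s}I(u+s\eta)\Big|_{s=0}
&=\int_{a}^{b}\pdd{s}\Big|_{s=0}F\bigl(t,u+s\eta,\dot u+s\dot\eta\bigr)\,\dd t\\
&=\int_{a}^{b}\bigl(F_{u}(t,u,\dot u)\cdot\eta+F_{\dot u}(t,u,\dot u)\cdot\dot\eta\bigr)\,\dd t.
\end{align*}
Here $F_u$ and $F_{\dot u}$ denote the partial differentials of $F$ with respect to its second and third vector arguments, obtained via the Fermat-Reyes theorem (Thm.~\ref{thm:FR-forGSF}), and the products are the natural pairings in $\rc^{d}$.

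Then I would apply the integration by parts formula of Thm.~\ref{thm:intRules}.\ref{enu:intByParts} to the second summand, with $f=F_{\dot u}(t,u(t),\dot u(t))\in\gsf([a,b],\rc^{d})$ and $g=\eta$, yielding
\begin{equation*}
\int_{a}^{b}F_{\dot u}(t,u,\dot u)\cdot\dot\eta\,\dd t=\bigl[F_{\dot u}(t,u,\dot u)\cdot\eta\bigr]_{a}^{b}-\int_{a}^{b}\der{t}\bigl(F_{\dot u}(t,u,\dot u)\bigr)\cdot\eta\,\dd t.
\end{equation*}
The boundary term vanishes because $\eta\in\gsf_{0}$ gives $\eta(a)=\eta(b)=0$, and collecting the two integrals produces the claimed expression for $\delta I(u;\eta)$.

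The only genuinely non-routine point is justifying that we may differentiate under the integral sign: this is not automatic in the non-Archimedean setting, but Lem.~\ref{lem:int} was designed precisely for such applications and the moderateness estimate in its proof applies because $H$ is a GSF on $[a,b]\times\rc$. All remaining manipulations (chain rule, integration by parts, linearity of the integral) have their GSF counterparts stated in Thms.~\ref{thm:rulesDer} and~\ref{thm:intRules}, so no $\eps$-wise argument is needed.
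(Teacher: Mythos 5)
Your proposal is correct and follows essentially the same route as the paper's proof, which likewise cites Lemma~\ref{lem:int} for differentiation under the integral sign, Thm.~\ref{thm:rulesDer} for the chain rule, and Thm.~\ref{thm:intRules} for integration by parts, with the boundary term killed by $\eta(a)=\eta(b)=0$. Your additional remarks on closure under composition and on why Lemma~\ref{lem:int} applies are just a more explicit spelling-out of what the paper leaves implicit.
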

\begin{proof}
We have (we use Thm.~\ref{thm:rulesDer}, Thm.~\ref{thm:intRules}
and Lemma \ref{lem:int}) 
\begin{align*}
\left.\der{s}I(u+s\eta)\right|_{s=0} & =\left.\der{s}\int_{a}^{b}F(t,u+s\eta,\dot{u}+s\dot{\eta})\,\dd t\right|_{s=0}\\
 & =\int_{a}^{b}\left.\pdd{s}F(t,u+s\eta,\dot{u}+s\dot{\eta})\right|_{s=0}\,\dd t\\
 & =\int_{a}^{b}\eta F_{u}(t,u,\dot{u})+\dot{\eta}F_{\dot{u}}(t,u,\dot{u})\,\dd t\\
 & =\left[\eta F_{\dot{u}}(t,u,\dot{u})\right]_{a}^{b}+\int_{a}^{b}\eta\left(F_{u}(t,u,\dot{u})-\der{t}F_{\dot{u}}(t,u,\dot{u})\right)\,\dd t\\
 & =\int_{a}^{b}\eta\left(F_{u}(t,u,\dot{u})-\der{t}F_{\dot{u}}(t,u,\dot{u})\right)\,\dd t.
\end{align*}
\end{proof}
We call $\delta I(u;\eta)$ the \emph{first variation} of $I$. In
addition we call $u\in\gsf([a,b],\rc^{d})$ a \emph{critical point}
of $I$ if $\delta I(u;\eta)=0$ for all $\eta\in\gsf_{0}$.

To prove the fundamental lemma of the calculus of variations, Lem.~\ref{lem:fund_lem_calc_var},
we first show that every GSF can be approximated using generalized
strict delta nets. 
\begin{lem}
\label{lem:limStrictDeltaNet}Let $a$, $b\in\rc$ be such that $a<b$
and let $f\in\gsf([a,b],\rc)$. Let $x\in[a,b]$ and $R\in\rc_{>0}$
be such that $B_{R}(x)\subseteq[a,b]$. Assume that $G_{t}\in\gsf(\rc,\rc)$
satisfy

\begin{enumerate}
\item \label{enu:int1}$\int_{-R}^{R}G_{t}=1$ for $t\in\rc_{>0}$ small.
\item \label{enu:G_tZero}For $t$ small, $(G_{t})_{t\in\rc_{>0}}$ is zero
outside every ball $B_{\delta}(0)$, $0<\delta<R$, i.e.
\begin{equation}
\forall\delta\in\rc_{>0}\,\exists\rho\in\rc_{>0}\,\forall t\in B_{\rho}(0)\cap\rc_{>0}\,\forall y\in[-R,-\delta]\cup[\delta,R]:\ G_{t}(y)=0.\label{eq:G_tIsZero}
\end{equation}
\item \label{enu:delta_bd} $\exists M\in \rc_{>0}\,\exists \rho\in\rc \,\forall t\in B_{\rho}(0)\colon \int_{-R}^R\left|G_t(y)\right|\,\dd y \leq M $.
\end{enumerate}
Then
\[
\lim_{t\to0^{+}}\int_{-R}^{R}f(x-y)G_{t}(y)\diff{y}=f(x).
\]
Moreover $\int_{-R}^{R}f(x-y)G_{t}(y)\diff{y}=\int_{x-R}^{x+R}f(y)G_{t}(x-y)\diff{y}$.

\end{lem}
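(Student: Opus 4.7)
The lemma has two claims, which I would treat separately, beginning with the auxiliary identity $\int_{-R}^{R}f(x-y)G_{t}(y)\,\dd y=\int_{x-R}^{x+R}f(y)G_{t}(x-y)\,\dd y$. This is the change of variables $u=x-y$. Since Thm.~\ref{thm:changeOfVariablesInt} requires an orientation-preserving $\phi$, I would either split $[-R,R]$ at $0$ and parameterize both halves monotonically, or first swap endpoints via Thm.~\ref{thm:intRules}.\ref{enu:chageOfExtremes} before applying $\phi(u)=x-u$; either route reduces the claim to the affine change of variables in each representative.

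For the main limit, I would use property~(i) to write
\[
\int_{-R}^{R}f(x-y)G_{t}(y)\,\dd y-f(x)=\int_{-R}^{R}\bigl(f(x-y)-f(x)\bigr)G_{t}(y)\,\dd y
\]
for $t$ sharply near $0$. Next I would invoke Thm.~\ref{thm:propGSF}.\ref{enu:locLipSharp} to fix $\delta_{0}\in\rc_{>0}$ with $B_{\delta_{0}}(x)\subseteq B_{R}(x)$ and a Lipschitz constant $L\in\rc$ on this neighbourhood. Since necessarily $L\ge 0$, I may replace $L$ by $L+1\in\rc_{>0}$ (invertible positive by Lem.~\ref{lem:mayer}) without losing the bound, obtaining $|f(x-y)-f(x)|\le L|y|$ for $y\in[-\delta_{0},\delta_{0}]$. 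Let $M\in\rc_{>0}$ be the invertible positive bound from property~(iii).

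Given arbitrary $\ve\in\rc_{>0}$, I would set $\delta:=\min\bigl(\delta_{0},\,\ve(LM)^{-1}\bigr)$, which lies in $\rc_{>0}$ since the minimum of invertible positive elements is invertible positive (by Lem.~\ref{lem:mayer} applied representative-wise). By property~(ii) there exists $\rho_{*}\in\rc_{>0}$ so that $G_{t}$ vanishes on $[-R,-\delta]\cup[\delta,R]$ for all $t\in B_{\rho_{*}}(0)\cap\rc_{>0}$; for such $t$ the integral collapses to $\int_{-\delta}^{\delta}$, and combining the Lipschitz estimate with property~(iii) gives
\[
\left|\int_{-\delta}^{\delta}\bigl(f(x-y)-f(x)\bigr)G_{t}(y)\,\dd y\right|\le L\delta\int_{-\delta}^{\delta}|G_{t}(y)|\,\dd y\le L\delta M\le\ve,
\]
which is exactly the required convergence statement in the sharp topology.

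The main obstacle, and essentially the only non-classical content of the argument, is ensuring that the generalized constants $L$, $M$, and the auxiliary radius $\delta$ are all \emph{invertible} positive elements of $\rc$, so that dividing by $LM$ is legitimate and $\delta$ is a bona fide sharp neighbourhood scale. Lem.~\ref{lem:mayer} together with the freedom to enlarge the Lipschitz constant resolves this uniformly. Once invertibility is secured, the remainder is a transparent generalized smooth analogue of the classical approximate-identity proof.
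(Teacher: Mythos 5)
Your proof is correct and follows essentially the same route as the paper's: subtract $f(x)$ using hypothesis (i), localize the integral to $[-\delta,\delta]$ using (ii), and control the remaining integral of $|G_t|$ using (iii), exactly as in the classical approximate-identity argument. The only (cosmetic) difference is that the paper estimates $|f(x-y)-f(x)|<r$ directly from sharp continuity (Thm.~\ref{thm:propGSF}.\ref{enu:GSF-cont}) and lets $r\in\rc_{>0}$ be arbitrary, whereas you derive the same smallness from the local Lipschitz property (Thm.~\ref{thm:propGSF}.\ref{enu:locLipSharp}) and engineer $\delta$ from the target $\ve$, with the invertibility bookkeeping handled via Lem.~\ref{lem:mayer}; both are valid.
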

\begin{proof}
We only have to generalize the classical proof concerning limits of
convolutions with strict delta nets. We first note that 
\[
\int_{-R}^{R}f(x-y)G_{t}(y)\diff{y}=\int_{x-R}^{x+R}f(y)G_{t}(x-y)\diff{y}
\]
so that these integrals exist because $(x-R,x+R)=B_{R}(x)\subseteq[a,b]$.
Using \ref{enu:int1}, for $t$ small, let's say for $0<t<S\in\rc_{>0}$,
we get
\begin{align*}
\left|\int_{-R}^{R}f(x-y)G_{t}(y)\diff{y}-f(x)\right| & =\left|\int_{-R}^{R}\left[f(x-y)-f(x)\right]G_{t}(y)\diff{y}\right|\\
 & \le\int_{-R}^{R}\left|f(x-y)-f(x)\right|\cdot\left|G_{t}(y)\right|\diff{y}.
\end{align*}
For each $r\in\rc_{>0}$, sharp continuity of $f$ at $x$ yields
$\left|f(x-y)-f(x)\right|<r$ for all $y$ such that $|y|<\delta\in\rc_{>0}$,
and we can take $\delta<R$. By \ref{enu:G_tZero}, for $0<|t|<\min(\rho,S)$,
we have
\begin{equation}
\left|\int_{-R}^{R}f(x-y)G_{t}(y)\diff{y}-f(x)\right|\le r\int_{-\delta}^{+\delta}\left|G_{t}(y)\right|\diff{y}.\label{eq:rTimesIntegG}
\end{equation}
The right hand side of \eqref{eq:rTimesIntegG} can be taken arbitrarily
small in $\rc_{>0}$ because $[-\delta,\delta]\fcmp\rc$, \ref{enu:delta_bd} and because
of the extreme value theorem \ref{thm:extremeValues} applied to the
GSF $G_{t}$.
\end{proof}
\begin{lem}[Fundamental Lemma of the Calculus of Variations]
\label{lem:fund_lem_calc_var}Let $a$, $b\in\rc$ such that $a<b$,
and let $f\in\gsf([a,b],\rc)$. If
\begin{equation}
\int_{a}^{b}f(t)\eta(t)\,\dd t=0\:\text{ for all }\:\eta\in\gsf_{0},\label{eq:HpFundLem}
\end{equation}
then $f=0$.
\end{lem}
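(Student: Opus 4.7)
By Lemma \ref{lem:f_zero_on_sharp_int}, it is enough to prove $f(x)=0$ at every sharply interior point $x\in[a,b]$, so I fix such an $x$ and choose $R\in\rc_{>0}$ with $B_R(x)\subseteq[a,b]$. The idea is to test the hypothesis \eqref{eq:HpFundLem} against a bump concentrated at $x$ and to recover the value $f(x)$ by means of Lemma \ref{lem:limStrictDeltaNet}.

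To build the test functions, I fix an ordinary mollifier $\psi\in\mathcal{D}(\R)$ with $\supp\psi\subseteq[-1,1]$ and $\int_{\R}\psi=1$, and for every invertible $t\in\rc_{>0}$ I set
\[
G_t(y):=\frac{1}{t}\,\psi\!\left(\frac{y}{t}\right),\qquad \eta_t(s):=G_t(x-s).
\]
Both are GSF in their respective variables because $\psi$ has a canonical GSF extension and $t$ is invertible. I then claim that $(G_t)$ satisfies hypotheses \ref{enu:int1}--\ref{enu:delta_bd} of Lemma \ref{lem:limStrictDeltaNet}: the change-of-variables formula (Thm.~\ref{thm:changeOfVariablesInt}) with $u=y/t$ gives both $\int_{-R}^{R}G_t=\int_{\R}\psi=1$ and $\int_{-R}^{R}|G_t|\le\int_{\R}|\psi|$ whenever $t<R$, while the inclusion $\supp G_t\subseteq[-t,t]$ yields \eqref{eq:G_tIsZero} for every $\delta\in\rc_{>0}$ and every $t\in(0,\delta)$. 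Moreover, for $t<R$ the estimates $|x-a|,|x-b|\ge R>t$ combined with $\supp G_t\subseteq[-t,t]$ force $\eta_t(a)=\eta_t(b)=0$, so $\eta_t\in\gsf_0(a,b)$.

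Plugging $\eta_t$ into the hypothesis, and using that $\eta_t$ vanishes outside $B_R(x)\subseteq[a,b]$, gives for all such $t$
\[
0=\int_a^b f(s)\,\eta_t(s)\,\dd s=\int_{x-R}^{x+R}f(s)\,G_t(x-s)\,\dd s.
\]
Letting $t\to 0^+$ in the sharp topology and invoking Lemma \ref{lem:limStrictDeltaNet} on the right-hand side, the latter tends to $f(x)$; hence $f(x)=0$, and Lemma \ref{lem:f_zero_on_sharp_int} finishes the proof.

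The main obstacle is essentially bookkeeping: one must verify that the support, normalization and $L^1$-boundedness of $\psi$ pass through to $G_t$ \emph{in the sharp sense} when $t$ ranges over $\rc_{>0}$ (in particular, over infinitesimal parameters), so that $G_t$ genuinely qualifies as a generalized strict delta net and $\eta_t$ genuinely sits in $\gsf_0(a,b)$. This is precisely why I rescale a \emph{standard} mollifier and rely on the change-of-variables formula for GSF integrals (Thm.~\ref{thm:changeOfVariablesInt}), rather than trying to argue $\varepsilon$-wise.
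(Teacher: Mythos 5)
Your proposal is correct and follows essentially the same route as the paper's proof: reduce to sharply interior points, test the hypothesis against a rescaled standard mollifier $\eta_t=G_t(x-\cdot)\in\gsf_0$, and recover $f(x)$ via Lemma~\ref{lem:limStrictDeltaNet}. Your explicit verification of hypotheses \ref{enu:int1}--\ref{enu:delta_bd} through the change-of-variables theorem is a welcome elaboration of a step the paper only asserts, but it does not change the argument.
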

\begin{proof}
Let $x\in[a,b]$. Because of Thm.~\ref{thm:propGSF}.\ref{enu:GSF-cont}
and Lem.~\ref{lem:approxOfBoundaryPointsWithInterior}, without loss
of generality we can assume that $x$ is a sharply interior point,
so that $B_{R}(x)\subseteq[a,b]$ for some $R\in\rc_{>0}$. Let $\phi\in\D_{[-1,1]}(\R)$
be such that $\int\phi=1$. Set $G_{t,\eps}(x):=\frac{1}{t_{\eps}}\phi\left(\frac{x}{t_{\eps}}\right)$,
where $x\in\R$ and $t\in\rc_{>0}$, and $G_{t}(x):=[G_{t,\eps}(x_{\eps})]$
for all $x\in\rc$. Then, for $t$ sufficiently small, we have $G_{t}(x-.)\in\gsf_{0}$
and \eqref{eq:HpFundLem} yields $\int_{a}^{b}f(y)G_{t}(x-y)\diff{y}=0$.
For $t$ small, we both have that $G_{t}(x-.)=0$ on $[a,x-R]\cup[x+R,b]$
and the assumptions of Lem.~\ref{lem:limStrictDeltaNet} hold. Therefore
\begin{align*}
0 & =\int_{a}^{b}f(y)G_{t}(x-y)\diff{y}=\int_{x-R}^{x+R}f(y)G_{t}(x-y)\diff{y}=\\
 & =\int_{-R}^{R}f(x-y)G_{t}(y)\diff{y},
\end{align*}
and Lem.~\ref{lem:limStrictDeltaNet} hence yields $f(x)=0$.
\end{proof}
Thus we obtain the following
\begin{thm}
\label{thm:ELE}Let $a$, $b\in\rc$ such that $a<b$, and let $u\in\gsf([a,b],\rc^{d})$.
Then u solves the Euler-Lagrange equations 
\begin{align}
F_{u}-\der{t}F_{\dot{u}}=0\label{eq:ELE}
\end{align}
for $I$ given by \eqref{eq:def_I}, if and only if $\delta I(u;\eta)=0$
for all $\eta\in\gsf_{0}$, i.e.~if and only if $u$ is a critical
point of $I$.
\end{thm}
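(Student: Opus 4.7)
The plan is to prove both implications directly from the formula for the first variation given just above, together with the Fundamental Lemma of the Calculus of Variations (Lem.~\ref{lem:fund_lem_calc_var}).

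For the easy direction, suppose $u$ satisfies the Euler-Lagrange equations \eqref{eq:ELE}. Then the expression
\[
\delta I(u;\eta)=\int_{a}^{b}\eta\bigl(F_{u}(t,u,\dot{u})-\tfrac{d}{dt}F_{\dot{u}}(t,u,\dot{u})\bigr)\,\dd t
\]
has an identically vanishing integrand, so $\delta I(u;\eta)=0$ for every $\eta\in\gsf_{0}$. Nothing beyond the formula derived in the preceding theorem is needed.

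For the nontrivial direction, assume $\delta I(u;\eta)=0$ for all $\eta\in\gsf_{0}$. Set
\[
g(t):=F_{u}(t,u(t),\dot{u}(t))-\tfrac{d}{dt}F_{\dot{u}}(t,u(t),\dot{u}(t)),
\]
which is a well-defined element of $\gsf([a,b],\rc^{d})$ by closure of GSF under composition (Thm.~\ref{thm:propGSF}.\ref{enu:category}) and by Thm.~\ref{thm:existenceUniquenessPrimitives}. Write $g=(g_{1},\ldots,g_{d})$ with each $g_{i}\in\gsf([a,b],\rc)$. To reduce to the scalar situation of Lem.~\ref{lem:fund_lem_calc_var}, fix $i\in\{1,\ldots,d\}$, let $e_{i}\in\rc^{d}$ be the $i$-th standard basis vector, and for an arbitrary scalar test function $\eta_{i}\in\gsf(\rc,\rc)$ with $\eta_{i}(a)=\eta_{i}(b)=0$ form $\eta:=\eta_{i}\,e_{i}\in\gsf_{0}$. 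The assumption gives
\[
0=\delta I(u;\eta)=\int_{a}^{b}\eta_{i}(t)\,g_{i}(t)\,\dd t.
\]
Since $\eta_{i}$ is arbitrary in the scalar analogue of $\gsf_{0}$, Lem.~\ref{lem:fund_lem_calc_var} yields $g_{i}=0$ on $[a,b]$. Doing this for each $i$ gives $g\equiv 0$, i.e.\ the Euler-Lagrange equation \eqref{eq:ELE}.

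The only point requiring a moment's care is the componentwise reduction: the Fundamental Lemma is formulated for scalar integrands, while $\gsf_{0}$ was defined with codomain $\rc^{d}$. The trick $\eta=\eta_{i}e_{i}$ handles this cleanly, and the inspection of the proof of Lem.~\ref{lem:fund_lem_calc_var} (which uses a scalar mollifier $G_{t}$) confirms that the scalar version applies exactly as needed. There is no real obstacle; the theorem is essentially a repackaging of the integration-by-parts identity from the previous result together with the fundamental lemma.
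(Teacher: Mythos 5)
Your proof is correct and follows exactly the route the paper intends: the theorem is stated without an explicit proof, as an immediate consequence of the first-variation formula and the Fundamental Lemma \ref{lem:fund_lem_calc_var}. Your componentwise reduction via $\eta=\eta_{i}e_{i}$ correctly fills in the one detail the paper glosses over (the scalar versus $\rc^{d}$-valued reading of $\gsf_{0}$ in the Fundamental Lemma), so nothing is missing.
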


\section{\label{sec:second-variation}second variation and minimizers}

As in the classical case (see e.g.~\cite{GeFo00}), thanks to the
extreme value theorem \ref{thm:extremeValues} and the property of
the interval $[a,b]$ of being functionally compact, we can naturally
define a topology on the space $\gsf([a,b],\rc^{d})$:
\begin{defn}
\label{def:genNormsSpaceGSF}Let $a$, $b\in\rc$, with $a<b$. Let
$m\in\N_0$ and $v\in\gsf([a,b],\rc^{d})$. Then
\[
\Vert v\Vert_{m}:=\max_{\substack{n\le m\\
1\le i\le d
}
}\max\left(\left|\frac{d^{n}}{dt^{n}}v^{i}(M_{ni})\right|,\left|\frac{d^{n}}{dt^{n}}v^{i}(m_{ni})\right|\right)\in\rc,
\]
where $M_{ni}$, $m_{ni}\in[a,b]$ satisfy
\[
\forall t\in[a,b]:\ \frac{d^{n}}{dt^{n}}v^{i}(m_{ni})\le\frac{d^{n}}{dt^{n}}v^{i}(t)\le\frac{d^{n}}{dt^{n}}v^{i}(M_{ni}).
\]
\end{defn}
\noindent The following result permits to calculate the (generalized)
norm $\Vert v\Vert_{m}$ using any net $(v_{\eps})$ that defines
$v$.
\begin{lem}
\label{lem:normSpaceGSF}Under the assumptions of Def.~\ref{def:genNormsSpaceGSF},
let $a=[a_{\eps}]$ and $b=[b_{\eps}]$ be such that $a_{\eps}<b_{\eps}$
for all $\eps$. Then we have:

\begin{enumerate}
\item \label{enu:normAndDefNet}If the net $(v_{\eps})$ defines $v$, then
$\Vert v\Vert_{m}=\left[\max_{\substack{n\le m\\
1\le i\le d
}
}\max_{t\in[a_{\eps},b_{\eps}]}\left|\frac{d^{n}}{dt^{n}}v_{\eps}^{i}(t)\right|\right]$;
\item \label{enu:normPos}$\Vert v\Vert_{m}\ge0$;
\item $\Vert v\Vert_{m}=0$ if and only if $v=0$;
\item $\forall c\in\rc:\ \Vert c\cdot v\Vert_{m}=|c|\cdot\Vert v\Vert_{m}$;
\item \label{enu:normTriang}For all $u\in\gsf([a,b],\rc^{d})$, we have
$\Vert u+v\Vert_{m}\le\Vert u\Vert_{m}+\Vert v\Vert_{m}$ and $\Vert u\cdot v\Vert_{m}\le c_{m}\cdot\Vert u\Vert_{m}\cdot\Vert v\Vert_{m}$
for some $c_{m}\in\rc_{>0}$.
\end{enumerate}
\end{lem}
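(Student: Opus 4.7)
The plan is to first establish (i), which provides a representative-level formula for $\|v\|_m$; parts (ii)--(v) then reduce to classical sup-norm estimates applied to a defining net. For (i), fix $n\le m$ and $1\le i\le d$. Since $\frac{d^n}{dt^n}v^i\in\gsf([a,b],\rc)$ and $[a,b]\fcmp\rc$ by Cor.~\ref{cor:intervalsFunctCmpt}, the extreme value theorem (Thm.~\ref{thm:extremeValues}) yields points $M_{ni},m_{ni}\in[a,b]$ as required by Def.~\ref{def:genNormsSpaceGSF}. Using a globally defined net (Thm.~\ref{thm:propGSF}.\ref{enu:globallyDefNet}), we may take $M_{ni,\eps},m_{ni,\eps}\in[a_\eps,b_\eps]$ to be points where $v_\eps^{(n),i}$ attains its classical maximum and minimum; the classes $M_{ni}:=[M_{ni,\eps}]$ and $m_{ni}:=[m_{ni,\eps}]$ then satisfy the extremality condition, and the value $\frac{d^n}{dt^n}v^i(M_{ni})=\bigl[\max_{t\in[a_\eps,b_\eps]}v_\eps^{(n),i}(t)\bigr]$ is independent of the specific choice, since any two extremal points force the same GSF value by the defining inequality. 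Combining the elementary identity $\max(|\min_t f(t)|,|\max_t f(t)|)=\max_t|f(t)|$ for continuous $f$ on a real compact interval with the fact that $\max$ in $\rc$ is computed representative-wise gives the formula in (i).

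Items (ii) and (iv) are immediate from (i): the representative is pointwise non-negative, and $|c_\eps|$ factors out of $\max_{n,i,t}|c_\eps v_\eps^{(n),i}(t)|$. For (iii), the direction $v=0\Rightarrow\|v\|_m=0$ is trivial. Conversely, if $\|v\|_m=0$ then applying (i) with $n=0$ yields $\max_{t\in[a_\eps,b_\eps]}|v_\eps^i(t)|\sim_\rho 0$ for every $i$; for any $t=[t_\eps]\in[a,b]$ this gives $|v_\eps^i(t_\eps)|\le\max_t|v_\eps^i(t)|$, so $v^i(t)=0$ in $\rc$, and hence $v=0$ as an element of $\gsf([a,b],\rc^d)$.

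For (v), the triangle inequality follows by applying (i) to the defining net $(u_\eps+v_\eps)$ and invoking the classical sup-norm triangle inequality on each $[a_\eps,b_\eps]$. For the product estimate, the Leibniz rule gives, for $t\in[a_\eps,b_\eps]$ and $n\le m$,
\[
\left|\frac{d^n}{dt^n}(u_\eps^i v_\eps^i)(t)\right|\le\sum_{k=0}^n\binom{n}{k}\left|u_\eps^{(k),i}(t)\right|\left|v_\eps^{(n-k),i}(t)\right|;
\]
bounding each factor by the corresponding maximum over $[a_\eps,b_\eps]$ and using $\sum_{k=0}^n\binom{n}{k}=2^n\le 2^m$ yields, after taking the max over $t$, $n$, $i$ and passing to classes in $\rc$ via (i), the desired estimate $\|u\cdot v\|_m\le c_m\|u\|_m\|v\|_m$ with $c_m:=2^m\in\rc_{>0}$. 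The main obstacle is the explicit description of the extremal points in (i): one must use the extreme value theorem mildly constructively (or, equivalently, directly invoke compactness of $[a_\eps,b_\eps]$ at the representative level) rather than apply it as a black box. Once (i) is in hand, the remaining items reduce to standard sup-norm manipulations on each $[a_\eps,b_\eps]$.
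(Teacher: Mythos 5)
Your proposal is correct and follows essentially the same route as the paper: apply the classical extreme value theorem $\eps$-wise to obtain representative-level extremal points, identify $\Vert v\Vert_{m}$ with the class of the $\eps$-wise $\mathcal{C}^{m}$-sup-norms (which simultaneously gives well-definedness, i.e.\ independence of the choice of $M_{ni}$, $m_{ni}$), and then deduce (ii)--(v) from the standard properties of sup-norms on each $[a_{\eps},b_{\eps}]$. The extra detail you supply for (iii) and for the product estimate (Leibniz rule with $c_{m}=2^{m}$) is consistent with what the paper leaves implicit.
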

\begin{proof}
By the standard extreme value theorem applied $\eps$-wise, we get
the existence of $\bar{m}_{ni\eps}$, $\bar{M}_{ni\eps}\in[a_{\eps},b_{\eps}]$
such that
\[
\forall t\in[a_{\eps},b_{\eps}]:\ \frac{d^{n}}{dt^{n}}v_{\eps}^{i}(\bar{m}_{ni\eps})\le\frac{d^{n}}{dt^{n}}v_{\eps}^{i}(t)\le\frac{d^{n}}{dt^{n}}v_{\eps}^{i}(\bar{M}_{ni\eps}).
\]
Hence $\left|\frac{d^{n}}{dt^{n}}v_{\eps}^{i}(t)\right|\le\max\left(\left|\frac{d^{n}}{dt^{n}}v_{\eps}^{i}(\bar{m}_{ni\eps})\right|,\left|\frac{d^{n}}{dt^{n}}v_{\eps}^{i}(\bar{M}_{ni\eps})\right|\right)$.
Thus
\[
\max_{\substack{n\le m\\
1\le i\le d
}
}\max_{t\in[a_{\eps},b_{\eps}]}\left|\frac{d^{n}}{dt^{n}}v_{\eps}^{i}(t)\right|\le\max_{\substack{n\le m\\
1\le i\le d
}
}\max\left(\left|\frac{d^{n}}{dt^{n}}v_{\eps}^{i}(\bar{m}_{ni\eps})\right|,\left|\frac{d^{n}}{dt^{n}}v_{\eps}^{i}(\bar{M}_{ni\eps})\right|\right).
\]
But $\bar{m}_{ni\eps}$, $\bar{M}_{ni\eps}\in[a_{\eps},b_{\eps}]$,
so
\begin{align*}
\left[\max_{\substack{n\le m\\
1\le i\le d
}
}\max_{t\in[a_{\eps},b_{\eps}]}\left|\frac{d^{n}}{dt^{n}}v_{\eps}^{i}(t)\right|\right] & =\left[\max_{\substack{n\le m\\
1\le i\le d
}
}\max\left(\left|\frac{d^{n}}{dt^{n}}v_{\eps}^{i}(\bar{m}_{ni\eps})\right|,\left|\frac{d^{n}}{dt^{n}}v_{\eps}^{i}(\bar{M}_{ni\eps})\right|\right)\right]=\\
 & =\max_{\substack{n\le m\\
1\le i\le d
}
}\max\left(\left|\frac{d^{n}}{dt^{n}}v^{i}(\bar{m}_{ni})\right|,\left|\frac{d^{n}}{dt^{n}}v^{i}(\bar{M}_{ni})\right|\right).
\end{align*}
This proves both that $\Vert v\Vert_{m}$ is well-defined, i.e.~it
does not depend on the particular choice of points $m_{ni}$, $M_{ni}$
as in Def.~\ref{def:genNormsSpaceGSF}, and the claim \ref{enu:normAndDefNet}.
The remaining properties \ref{enu:normPos} - \ref{enu:normTriang}
follows directly from \ref{enu:normAndDefNet} and the usual properties
of standard $\mathcal{C}^{m}$-norms.
\end{proof}
Using these $\rc$-valued norms, we can naturally define a topology
on the space $\gsf([a,b],\rc^{d})$.
\begin{defn}
\label{def:sharpTopSpaceGSF}Let $a$, $b\in\rc$, with $a<b$. Let
$m\in\N$, $u\in\gsf([a,b],\rc^{d})$, $r\in\rc_{>0}$, then

\begin{enumerate}
\item $B_{r}^{m}(u):=\left\{ v\in\gsf([a,b],\rc^{d})\mid\Vert v-u\Vert_{m}<r\right\} $
\item If $U\subseteq\gsf([a,b],\rc^{d})$, then we say that $U$ is a \emph{sharply
open set} if
\[
\forall u\in U\,\exists m\in\N\,\exists r\in\rc_{>0}:\ B_{r}^{m}(u)\subseteq U.
\]
\end{enumerate}
\end{defn}
\noindent As in \cite[Thm.~2]{GKV}, one can easily prove that sharply
open sets form a topology on $\gsf([a,b],\rc^{d})$. Using this topology,
we can define when a curve is a minimizer of the functional $I$.
Note explicitly that there are no restrictions on the generalized
numbers $a$, $b\in\rc$, $a<b$. E.g.~they can also both be infinite.
\begin{defn}
\label{def:minimizer}Let $a$, $b\in\rc$, with $a<b$ and let $u\in\gsf([a,b],\rc^{d})$,
then

\begin{enumerate}
\item For all $p$, $q\in\rc^{d}$, we set
\[
\gsf_{\text{bd}}(p,q):=\left\{ v\in\gsf([a,b],\rc^{d})\mid v(a)=p,\ v(b)=q\right\} .
\]
Note that $\gsf_{\text{bd}}(0,0)=\gsf_{0}$. The subscript ``bd'' stands here for ``boundary values''.
\item We say that $u$ is a \emph{local minimizer of }$I$ \emph{in} $\gsf_{\text{bd}}(p,q)$
if $u\in\gsf_{\text{bd}}(p,q)$ and
\begin{equation}
\exists r\in\rc_{>0}\,\exists m\in\N\,\forall v\in B_{r}^{m}(u)\cap\gsf_{\text{bd}}(p,q):\ I(v)\ge I(u)\label{eq:defMinimizer}
\end{equation}
\item We define the \emph{second variation} of $I$ in direction $\eta\in\gsf_{0}$
as
\[
\delta^{2}I(u;\eta):=\left.\difff{s}\right|_{0}I(u+s\eta).
\]
\end{enumerate}
\end{defn}
\noindent Note also explicitly that the points $p$, $q\in\rc^{d}$
can have infinite norm, e.g.~$|p_{\eps}|\to+\infty$ as $\eps\to0$.
We calculate, by using the standard Einstein's summation conventions
\begin{align*}
\delta^{2}I(u;\eta) & =\left.\difff{s}\right|_{0}\int_{a}^{b}F(t,u+s\eta,\dot{u}+s\dot{\eta})\,\dd t\\
 & =\int_{a}^{b}\left.\pdddp{s}\right|_{0}F(t,u+s\eta,\dot{u}+s\dot{\eta})\,\dd t\\
 & =\int_{a}^{b}F_{u^{i}u^{j}}(t,u,\dot{u})\eta^{i}\eta^{j}+2F_{u^{i}\dot{u}^{j}}(t,u,\dot{u})\eta^{i}\dot{\eta}^{j}+F_{\dot{u}^{i}\dot{u}^{j}}(t,u,\dot{u})\dot{\eta}^{i}\dot{\eta}^{j}\,\dd t,
\end{align*}
which we abbreviate as
\[
\delta^{2}I(u;\eta)=\int_{a}^{b}F_{uu}(t,u,\dot{u})\eta\eta+2F_{u\dot{u}}(t,u,\dot{u})\eta\dot{\eta}+F_{\dot{u}\dot{u}}(t,u,\dot{u})\dot{\eta}\dot{\eta}\,\dd t.
\]

The following results establish classical necessary and sufficient
conditions to decide if a function $u$ is a minimizer for the given
functional \eqref{eq:def_I}. 
\begin{thm}
\label{thm:necessCondsForMinimizer}Let $a$, $b\in\rc$ with $a<b$,
let $F\in\gsf([a,b]\times\rc^{d}\times\rc^{d},\rc)$, let $p$, $q\in\rc^{d}$
and let $u$ be a local minimizer of $I$ in $\gsf_{\text{\emph{bd}}}(p,q)$.
Then

\begin{enumerate}
\item $\delta I(u;\eta)=0$ for all $\eta\in\gsf_{0}$;
\item \label{enu:2ndVarPosNec}$\delta^{2}I(u;\eta)\geq0$ for all $\eta\in\gsf_{0}$.
\end{enumerate}
\end{thm}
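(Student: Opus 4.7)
\medskip

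\noindent\textbf{Proof proposal.} The plan is to reduce both claims to the scalar tests Lem.~\ref{lem:local_min_diff} and Lem.~\ref{lem:second_deriv_test}, applied to the single-variable generalized smooth function
\[
g(s) := I(u + s\eta) = \int_{a}^{b} F\bigl(t,\, u(t) + s\eta(t),\, \dot u(t) + s\dot\eta(t)\bigr)\,\dd t, \qquad s \in \rc,
\]
for an arbitrarily fixed $\eta \in \gsf_0$. First I would verify that $g \in \gsf(\rc, \rc)$: by closure of GSF under composition and arithmetic, the integrand is a GSF of $(t,s)$ on $[a,b] \times \rc$, so the argument in the proof of Lem.~\ref{lem:int} (together with Thm.~\ref{thm:existenceUniquenessPrimitives}) shows that $g$ is a GSF of $s$. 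Directly from the definitions, $g'(0) = \delta I(u;\eta)$ and $g''(0) = \delta^{2} I(u;\eta)$.

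The next step is to identify $s = 0$ as a sharply interior local minimum of $g$ on some sharp ball in $\rc$. Since $\eta(a) = \eta(b) = 0$, the curve $u + s\eta$ lies in $\gsf_{\text{bd}}(p,q)$ for every $s \in \rc$, so admissibility is preserved. By Def.~\ref{def:minimizer} there exist $r \in \rc_{>0}$ and $m \in \N$ with $I(v) \ge I(u)$ for all $v \in B_{r}^{m}(u) \cap \gsf_{\text{bd}}(p,q)$. If $\Vert\eta\Vert_{m} = 0$, then by Lem.~\ref{lem:normSpaceGSF} we have $\eta \equiv 0$ and both statements are trivial. Otherwise I set
\[
s_{0} := \frac{r}{\Vert\eta\Vert_{m} + 1} \in \rc_{>0},
\]
which is invertible because $\Vert\eta\Vert_{m} + 1 \ge 1 > 0$. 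Using the homogeneity $\Vert s\eta\Vert_{m} = |s|\cdot\Vert\eta\Vert_{m}$ from Lem.~\ref{lem:normSpaceGSF}, a short estimate gives $r - |s|\,\Vert\eta\Vert_{m} \ge s_{0} > 0$ in the sharp order whenever $|s| < s_{0}$, so $u + s\eta \in B_{r}^{m}(u)$ and hence $g(s) \ge g(0)$ for all $s$ in the sharp ball $B_{s_{0}}(0) \subseteq \rc$.

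Finally, Lem.~\ref{lem:local_min_diff} applied to $g$ at the sharply interior minimizer $0$ yields $g'(0) = \delta I(u;\eta) = 0$, proving (i), and Lem.~\ref{lem:second_deriv_test} yields $g''(0) = \delta^{2} I(u;\eta) \ge 0$, proving (ii). The main delicate point I anticipate is precisely the norm inequality $\Vert s\eta\Vert_{m} < r$ in the sharp order: multiplication by a non-invertible (possibly infinitesimal) $\Vert\eta\Vert_{m}$ does not transport sharp strict inequalities in general, which is why the denominator $\Vert\eta\Vert_{m} + 1$ rather than $\Vert\eta\Vert_{m}$ must be used in defining $s_{0}$, producing the invertible margin $r/(\Vert\eta\Vert_{m}+1) > 0$. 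Aside from this careful estimate in the generalized $\mathcal{C}^{m}$-norm, the argument is a clean reduction to the one-dimensional calculus already developed for GSF.
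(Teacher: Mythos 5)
Your proposal is correct and takes essentially the same approach as the paper: reduce to the one-variable GSF $g(s)=I(u+s\eta)$, show that $s=0$ is a sharply interior local minimum (using $\eta(a)=\eta(b)=0$ to preserve the boundary conditions), and then apply Lem.~\ref{lem:local_min_diff} and Lem.~\ref{lem:second_deriv_test}. The only difference is cosmetic: you exhibit the explicit radius $s_{0}=r/(\Vert\eta\Vert_{m}+1)$ via the norm homogeneity of Lem.~\ref{lem:normSpaceGSF}, whereas the paper obtains such a radius by appealing to the sharp continuity of $s\mapsto u+s\eta$; both estimates are valid.
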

\begin{proof}
Let $r\in\rc_{>0}$ be such that \eqref{eq:defMinimizer} holds. Since
$\eta\in\gsf_{0}$, the map $s\in\rc\mapsto u+s\eta\in\gsf_{\text{bd}}(p,q)$
is well defined and continuous with respect to the trace of the sharp
topology in its codomain. Therefore, we can find $\bar{r}\in\rc_{>0}$
such that $u+s\eta\in B_{r}^{m}(u)\cap\gsf_{\text{bd}}(p,q)$ for
all $s\in B_{\bar{r}}(0)$. We hence have $I(u+s\eta)\ge I(u)$. This
shows that the GSF $s\in B_{\bar{r}}(0)\mapsto I(u+s\eta)\in\rc$
has a local minimum at $s=0$. Now, we employ Lem.~\ref{lem:local_min_diff}
and Lem.~\ref{lem:second_deriv_test} and thus the claims are proven. 
\end{proof}
\begin{thm}
\label{thm:suffCondsForMinimizer}Let $a$, $b\in\rc$ with $a<b$
and $p$, $q\in\rc^{d}$. Let $u\in\gsf_{\text{\emph{bd}}}(p,q)$
be such that 

\begin{enumerate}
\item \label{enu:1stVarZero}$\delta I(u;\eta)=0$ for all $\eta\in\gsf_{0}$. 
\item \label{enu:2ndVarZero}$\delta^{2}I(v;\eta)\geq0$ for all $\eta\in\gsf_{0}$
and for all $v\in B_{r}^{m}(u)\cap\mathcal{GC}_{\text{\emph{bd}}}^{\infty}\left(p,q\right)$,
where $r\in\rc_{>0}$ and $m\in\N$.
\end{enumerate}
Then $u$ is a local minimizer of the functional $I$ in $\gsf_{\text{\emph{bd}}}(p,q)$. 

Moreover, if $\delta^{2}I(v;\eta)>0$ for all $\eta\in\gsf_{0}$ such that $\Vert\eta\Vert_{m}>0$
and for all $v\in B_{2r}^{m}(u)\cap\mathcal{GC}_{\text{\emph{bd}}}^{\infty}\left(p,q\right)$,
then $I(v)>I(u)$ for all $v\in B_{r}^{m}(u)\cap\mathcal{GC}_{\text{\emph{bd}}}^{\infty}\left(p,q\right)$
such that $\Vert v-u\Vert_{m}>0$.

\end{thm}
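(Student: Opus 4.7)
The plan is to reduce the minimization claim to a one-dimensional Taylor expansion along the affine segment joining $u$ to $v$. Fix $v\in B_r^m(u)\cap\gsf_{\text{bd}}(p,q)$ and set $\eta:=v-u$. Since $v(a)=p=u(a)$ and $v(b)=q=u(b)$, we have $\eta\in\gsf_0$. I would then define the auxiliary GSF
\[
\varphi(s):=I(u+s\eta)=\int_a^b F(t,u+s\eta,\dot u+s\dot\eta)\,\dd t,\qquad s\in\rc.
\]
Closure of GSF under composition together with Lemma~\ref{lem:int} (differentiation under the integral sign) shows that $\varphi\in\gsf(\rc,\rc)$, and yields $\varphi'(s)=\delta I(u+s\eta;\eta)$ and $\varphi''(s)=\delta^2 I(u+s\eta;\eta)$ by the same computation used for the first and second variations.

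Next I would apply Taylor's theorem (Thm.~\ref{thm:Taylor}) with $n=1$, $a=0$, $x=1$, producing some $\xi\in[0,1]\subseteq\rc$ with
\[
\varphi(1)=\varphi(0)+\varphi'(0)+\tfrac{1}{2}\varphi''(\xi).
\]
Hypothesis (i) forces $\varphi'(0)=\delta I(u;\eta)=0$, so
\[
I(v)-I(u)=\tfrac{1}{2}\,\delta^2 I(u+\xi\eta;\eta).
\]
To apply hypothesis (ii), I need to check that $u+\xi\eta\in B_r^m(u)\cap\gsf_{\text{bd}}(p,q)$. The boundary values are correct because $\eta\in\gsf_0$. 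For the norm estimate, $0\le\xi\le1$ in $\rc$ gives $|\xi|=\xi\le1$, so Lemma~\ref{lem:normSpaceGSF} yields
\[
\|\xi\eta\|_m=|\xi|\cdot\|\eta\|_m\le\|\eta\|_m<r.
\]
Hence hypothesis (ii) applies and gives $\delta^2 I(u+\xi\eta;\eta)\ge0$, which implies $I(v)\ge I(u)$, so $u$ is a local minimizer.

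The strict case follows by the same argument: if $\|v-u\|_m=\|\eta\|_m>0$, then since $u+\xi\eta\in B_r^m(u)\subseteq B_{2r}^m(u)$, the strict hypothesis yields $\delta^2 I(u+\xi\eta;\eta)>0$, whence $I(v)>I(u)$. The main subtlety I anticipate is not technical but conceptual: since $\rc$ is only partially ordered with zero divisors, one must resist the temptation to make the Taylor remainder ``small'' by an Archimedean argument, and instead use the partial order and sharp inequalities directly, relying on the fact that the Taylor identity is an \emph{equality} of generalized numbers. The bound $|\xi|\le 1$ is the one nontrivial order-theoretic step, and the homogeneity of $\|\cdot\|_m$ in Lemma~\ref{lem:normSpaceGSF} closes it cleanly. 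No $\eps$-wise argument is needed; the proof stays entirely within the GSF framework.
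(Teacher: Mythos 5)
Your proposal is correct and follows essentially the same route as the paper: reduce to the one-variable function $\psi(s)=I(u+s(v-u))$, identify $\psi'$ and $\psi''$ with the first and second variations, and apply Taylor's theorem with Lagrange remainder, using $|\xi|\le 1$ and the homogeneity of $\Vert\cdot\Vert_{m}$ to place the intermediate point in $B_{r}^{m}(u)$. The only (harmless) difference is that you apply Taylor directly on the closed segment $[0,1]$ to reach $s=1$, whereas the paper works on $B_{1}(0)$ and recovers $I(v)\ge I(u)$ by letting $s\to1^{-}$ via Lem.~\ref{lem:permSignLimit}.
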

\begin{proof}
For any $v\in B_{r}^{m}(u)\cap\mathcal{GC}_{\text{bd}}^{\infty}\left(p,q\right)$,
we set $\psi(s):=I(u+s(v-u))\in\rc$ for all $s\in B_{1}(0)$ so
that $u+s(v-u)\in B_{r}^{m}(u)$. Since $(v-u)(a)=0=(v-u)(b)$, we
have $v-u\in\gsf_{0}$, and properties \ref{enu:1stVarZero}, \ref{enu:2ndVarZero}
yield $\psi'(0)=\delta I(u;v-u)=0$ and $\psi''(s)=\delta^{2}I(u+s(v-u);v-u)\ge0$
for all $s\in B_{1}(0)$. We claim that $s=0$ is a minimum of $\psi$.
In fact, for all $s\in B_{1}(0)$ by Taylor's theorem \ref{thm:Taylor}
\[
\psi(s)=\psi(0)+s\psi'(0)+\frac{s^{2}}{2}\psi''(\xi)
\]
for some $\xi\in[0,s]$. But $\psi'(0)=0$ and hence $\psi(s)-\psi(0)=\frac{s^{2}}{2}\psi''(\xi)\ge0$.
Finally, Lem.~\ref{lem:permSignLimit} yields
\[
\lim_{s\to1^{-}}\psi(s)=I(v)\ge\psi(0)=I(u),
\]
which is our conclusion. Note explicitly that if $\delta^{2}I(v;\eta)=0$
for all $\eta\in\gsf_{0}$ and for all $v\in B_{r}^{m}(u)\cap\mathcal{GC}_{\text{bd}}^{\infty}\left(p,q\right)$,
then $\psi''(\xi)=0$ and hence $I(v)=I(u)$.

Now, assume that $\delta^{2}I(v;\eta)>0$ for all $\eta\in\gsf_{0}$
such that $\Vert\eta\Vert_{m}>0$ and for all $v\in B_{2r}^{m}(u)\cap\mathcal{GC}_{\text{bd}}^{\infty}\left(p,q\right)$,
and take $v\in B_{r}^{m}(u)\cap\mathcal{GC}_{\text{bd}}^{\infty}\left(p,q\right)$
such that $\Vert v-u\Vert_{m}>0$. As above set $\psi(s):=I(u+s(v-u))\in\rc$
for all $s\in B_{3/2}(0)$ so that $u+s(v-u)\in B_{2r}^{m}(u)$.
We have $\psi'(0)=0$ and $\psi''(s)=\delta^{2}I(u+s(v-u);v-u)>0$
for all $s\in B_{3/2}(0)$ because $\Vert v-u\Vert_{m}>0$. Using
Taylor's theorem, we get $\psi(1)=\psi(0)+\frac{1}{2}\psi''(\xi)$
for some $\xi\in[0,1]$. Therefore $\psi(1)-\psi(0)=I(v)-I(u)=\frac{1}{2}\psi''(\xi)>0$.
\end{proof}
\begin{lem}
\label{lem:mvt}Let $(a_{k})_{k\in\N}$, $(b_{k})_{k\in\N}$ and $(c_{k})_{k\in\N}$
be sequences in $\rc_{>0}$. Assume that both $(a_{k})_{k}$, $(b_{k})_{k}\to0$
and $\frac{c_{k}}{a_{k}+b_{k}}\to1$ in the sharp topology as $k\to+\infty$.
Let $f\in\gsf([a_{1},b_{1}],\rc)$. Finally, let $a_{k}<t<b_{k}$
for all $k\in\N$, then it holds that
\[
f(t)=\lim_{k\to\infty}\frac{1}{c_{k}}\int_{t-a_{k}}^{t+b_{k}}f(s)\,\dd s.
\]
\end{lem}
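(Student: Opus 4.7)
The plan is to reduce the claim to the fundamental theorem of calculus plus a second-order Taylor expansion of a primitive. First, by Theorem~\ref{thm:existenceUniquenessPrimitives} the GSF $f$ admits a primitive $F\in\gsf([a_1,b_1],\rc)$ with $F'=f$, so that
\[
\int_{t-a_k}^{t+b_k} f(s)\,\dd s = F(t+b_k)-F(t-a_k)
\]
for every $k$ (interpreting the hypothesis as asserting that $[t-a_k,t+b_k]\subseteq[a_1,b_1]$, which is needed for the integral to be defined).

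Next, I would apply the Taylor formula of Theorem~\ref{thm:Taylor} at the base point $t$, with step-sizes $+b_k$ and $-a_k$ respectively, obtaining $\xi_k\in[t,t+b_k]$ and $\eta_k\in[t-a_k,t]$ such that
\[
F(t+b_k)=F(t)+b_k\,f(t)+\tfrac{b_k^2}{2}f'(\xi_k),\qquad F(t-a_k)=F(t)-a_k\,f(t)+\tfrac{a_k^2}{2}f'(\eta_k).
\]
Subtracting the two identities and dividing by $c_k$ (which is permitted since $c_k\in\rc_{>0}$ is invertible by Lem.~\ref{lem:mayer}) gives
\[
\frac{1}{c_k}\int_{t-a_k}^{t+b_k} f(s)\,\dd s \;=\; \frac{a_k+b_k}{c_k}\,f(t) \;+\; \frac{b_k^2\,f'(\xi_k)-a_k^2\,f'(\eta_k)}{2c_k}.
\]

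To conclude, I would pass to the sharp limit $k\to\infty$ in each term. For the leading term, $\frac{a_k+b_k}{c_k}\to 1$ follows from the hypothesis $\frac{c_k}{a_k+b_k}\to 1$ together with sharp continuity of inversion on the invertible elements (Thm.~\ref{thm:propGSF}.\ref{enu:GSF-cont}), so this contribution converges to $f(t)$. For the remainder, the extreme value theorem~\ref{thm:extremeValues} applied to $f'$ on $[a_1,b_1]\fcmp\rc$ (Cor.~\ref{cor:intervalsFunctCmpt}) yields $M\in\rc_{\ge 0}$ with $|f'|\le M$ throughout $[a_1,b_1]$. Since $0<b_k\le a_k+b_k$, the ratio $\frac{b_k}{c_k}=\frac{b_k}{a_k+b_k}\cdot\frac{a_k+b_k}{c_k}$ stays bounded as $k\to\infty$, so $\frac{b_k^2}{c_k}=b_k\cdot\frac{b_k}{c_k}\to 0$ because $b_k\to 0$; likewise $\frac{a_k^2}{c_k}\to 0$. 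Hence the remainder is bounded in absolute value by $\tfrac{M}{2}(\frac{a_k^2}{c_k}+\frac{b_k^2}{c_k})\to 0$, and Rem.~\ref{rem:limits}.\ref{enu:x_is_zero} together with the argument of Lem.~\ref{lem:permSignLimit} finishes the proof.

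The main delicate point is the non-Archimedean bookkeeping: checking that $a_k+b_k$ and $c_k$ are genuinely invertible (not merely positive), that the asymptotic relation $\frac{c_k}{a_k+b_k}\to 1$ transfers to $\frac{a_k+b_k}{c_k}\to 1$ and forces $\frac{b_k}{c_k}$ to be sharply bounded, and that the Taylor expansion is legitimately applied on the segments $[t-a_k,t]$ and $[t,t+b_k]$, which in turn requires these to lie in a sharply open neighborhood on which the primitive $F$ is defined.
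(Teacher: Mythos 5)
Your argument is correct, but it is not the route the paper takes, so it is worth comparing the two. The paper's proof is a short ``$\eps$-wise'' argument: apply the classical integral mean value theorem to each defining net $f_\eps$ on $[t_\eps-a_{k,\eps},t_\eps+b_{k,\eps}]$ to produce $\tau_k\in[t-a_k,t+b_k]$ with $f(\tau_k)=\frac{1}{a_k+b_k}\int_{t-a_k}^{t+b_k}f(s)\,\dd s$, observe that $\tau_k\to t$ sharply, and conclude from sharp continuity of $f$ (Thm.~\ref{thm:propGSF}.\ref{enu:GSF-cont}) together with $\frac{c_k}{a_k+b_k}\to1$. You instead argue intrinsically: primitive via Thm.~\ref{thm:existenceUniquenessPrimitives}, second-order Taylor expansion of $F$ at $t$ (Thm.~\ref{thm:Taylor}), and an explicit remainder estimate using the extreme value theorem for $f'$. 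Both are sound. The paper's version is shorter, needs only continuity of $f$ (no bound on $f'$), and works at any $t$ of the closed interval; yours avoids the $\eps$-wise step --- in line with the methodology the paper advertises --- at the price of requiring $t$ to be sharply interior so that Thm.~\ref{thm:Taylor} (stated for sharply open domains) applies to the segments $[t-a_k,t+b_k]$ for large $k$, and of the non-Archimedean bookkeeping you correctly identify (invertibility of $a_k+b_k$ as a sum of two elements of $\rc_{>0}$, the bound $\frac{b_k}{c_k}=\frac{b_k}{a_k+b_k}\cdot\frac{a_k+b_k}{c_k}\le 2$ for $k$ large, the maximum $M$ of $|f'|$ on the functionally compact interval). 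Two small touch-ups: the closing citation of Rem.~\ref{rem:limits}.\ref{enu:x_is_zero} and Lem.~\ref{lem:permSignLimit} is not really what finishes the argument --- what you need, and have essentially already written, is the squeeze $0\le\frac{M}{2}\bigl(a_k^2+b_k^2\bigr)c_k^{-1}\le M(a_k+b_k)\to0$ in the sharp topology; and the hypothesis $a_k<t<b_k$ is garbled in the paper's own statement, so your reading that $[t-a_k,t+b_k]$ must lie in the domain of $f$ is the right one and matches how the lemma is actually used in the proof of Thm.~\ref{Thm:LH}.
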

\begin{proof}
We can apply the integral mean value theorem for each $\eps$ and each defining
net $(f_{\eps})$ of $f$ to get the existence of $\tau_{k}\in[t-a_{k},t+b_{k}]$
such that
\begin{align*}
f(\tau_{k}) & =\frac{1}{b_{k}+a_{k}}\int_{t-a_{k}}^{t+b_{k}}f(s)\,\dd s\\
 & =\frac{c_{k}}{b_{k}+a_{k}}\frac{1}{c_{k}}\int_{t-a_{k}}^{t+b_{k}}f(s)\,\dd s.
\end{align*}
Now, we take the limit for $k\to\infty$, and the claim follows by
assumption and by Thm\@.~\ref{thm:propGSF}.\ref{enu:GSF-cont},
i.e.~by sharp continuity of $f$.
\end{proof}
We now derive the so-called necessary Legendre condition:
\begin{thm}
\label{Thm:LH}Let $a$, $b\in\rc$ with $a<b$ and let $u\in\gsf([a,b],\rc^{d})$
be a minimizer of the functional $I$. Then
\[
F_{\dot{u}\dot{u}}(t,u(t),\dot{u}(t))
\]
is positive semi definite for all $t\in[a,b]$, i.e.
\begin{equation}
F_{\dot{u}^{i}\dot{u}^{j}}(t,u(t),\dot{u}(t))\lambda^{i}\lambda^{j}\geq0,\quad\forall\lambda=(\lambda^{1},\ldots,\lambda^{d})\in\rc^{d}.\label{eq:LH}
\end{equation}
\end{thm}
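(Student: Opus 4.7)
The plan is to reduce the statement to the classical Legendre scaling argument. Since $u$ is a minimizer, Theorem~\ref{thm:necessCondsForMinimizer}.\ref{enu:2ndVarPosNec} gives $\delta^{2}I(u;\eta)\ge 0$ for every $\eta\in\gsf_{0}$. I will fix an arbitrary sharply interior point $t_{0}\in[a,b]$ and an arbitrary $\lambda\in\rc^{d}$, pick once and for all a classical bump $g\in\mathcal{D}(\R)$ with $\mathrm{supp}(g)\subseteq(-1,1)$ and normalization $c:=\int_{-1}^{1}g'(s)^{2}\,ds\in\R_{>0}$ (invertible in $\rc$), and consider the localized test function
\[
\eta_{h}(t):=\lambda\cdot g\!\left(\tfrac{t-t_{0}}{h}\right),\qquad t\in\rc,
\]
for $h\in\rc_{>0}$ small enough that $[t_{0}-h,t_{0}+h]\subseteq(a,b)$, which is possible because $t_{0}$ is sharply interior. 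By closure of GSF under composition and the compact support of $g$, this $\eta_{h}$ is a member of $\gsf_{0}$.

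The core step is to perform the change of variables $s=(t-t_{0})/h$ via Theorem~\ref{thm:changeOfVariablesInt} in each of the three terms of $\delta^{2}I(u;\eta_{h})$, so as to expose a hierarchy of scales
\[
\delta^{2}I(u;\eta_{h})\;=\;h\,A(h)\,+\,2\,B(h)\,+\,\tfrac{1}{h}\,C(h),
\]
where $C(h)$ is the integral over $[-1,1]$ of $F_{\dot{u}^{i}\dot{u}^{j}}(t_{0}+hs,u(t_{0}+hs),\dot{u}(t_{0}+hs))\,\lambda^{i}\lambda^{j}\,g'(s)^{2}$, while $A(h)$ and $B(h)$ are the analogous integrals with weights $g^{2}$ and $gg'$ and Hessians $F_{uu}$, $F_{u\dot{u}}$. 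Each of $A,B,C$ assembles into a GSF in the parameter $h$ by the closure properties of GSF together with the parameterized-integration Lemma~\ref{lem:int}, and hence is sharply continuous at $h=0$ by Theorem~\ref{thm:propGSF}.\ref{enu:GSF-cont}. The decisive observation is that $B(0)=F_{u\dot{u}}(t_{0},u(t_{0}),\dot{u}(t_{0}))\,\lambda\lambda\int_{-1}^{1}g\,g'\,ds=0$ (since $g(\pm 1)=0$), whereas $C(0)=c\cdot F_{\dot{u}^{i}\dot{u}^{j}}(t_{0},u(t_{0}),\dot{u}(t_{0}))\,\lambda^{i}\lambda^{j}$.

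To cash this in, I multiply the inequality $\delta^{2}I(u;\eta_{h})\ge 0$ by $h$ to cancel the $1/h$ singularity, obtaining
\[
0\;\le\;h\,\delta^{2}I(u;\eta_{h})\;=\;h^{2}A(h)+2hB(h)+C(h),
\]
and let $h\to 0^{+}$ in the sharp topology. Lemma~\ref{lem:permSignLimit} applied to the sharply continuous right-hand side yields $0\le C(0)=c\cdot F_{\dot{u}^{i}\dot{u}^{j}}(t_{0},u(t_{0}),\dot{u}(t_{0}))\lambda^{i}\lambda^{j}$, and dividing by the invertible positive real $c$ gives the Legendre inequality at every sharply interior $t_{0}$. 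To extend to arbitrary $t\in[a,b]$, I will observe that the map $t\mapsto F_{\dot{u}^{i}\dot{u}^{j}}(t,u(t),\dot{u}(t))\lambda^{i}\lambda^{j}$ is a sharply continuous GSF, and then combine density of $\mathrm{int}([a,b])$ in $[a,b]$ provided by Lemma~\ref{lem:approxOfBoundaryPointsWithInterior} with a second use of Lemma~\ref{lem:permSignLimit}. The main obstacle I anticipate is the bookkeeping to certify that $A,B,C$ are genuine GSF in $h$ so that the sharp-continuity arguments at $h=0$ are legitimate; once this is in place, the non-Archimedean setting mirrors the classical proof, with Lemma~\ref{lem:permSignLimit} doing the work of passing inequalities through limits.
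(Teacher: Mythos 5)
Your proof is correct, but it takes a genuinely different route from the paper's. The paper constructs, at the level of representatives, a regularized triangular test function $\vartheta=[\vartheta_{\eps}(-)]\in\gsf_{0}$ supported in an interval of infinitesimal width $2(\dd\rho^{k}+\dd\rho^{2k})$ around $t$ with $\dot{\vartheta}(t)=\lambda$ exactly; it divides $\delta^{2}I(u;\vartheta)\ge0$ by $2\dd\rho^{k}$, disposes of the $F_{uu}$- and $F_{u\dot{u}}$-terms through the size estimates $O(\dd\rho^{3k})$ and $O(\dd\rho^{2k})$, identifies the limit of the surviving term as $k\to\infty$ by the generalized integral mean value theorem (Lem.~\ref{lem:mvt}), and first reduces to componentwise invertible $\lambda$ via Lem.~\ref{lem:componentWisePos}. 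You instead rescale one fixed classical bump by a generalized parameter $h\in\rc_{>0}$, change variables to expose the hierarchy $hA+2B+h^{-1}C$, multiply by $h$, and conclude from sharp continuity of $A$, $B$, $C$ at $h=0$ together with Lem.~\ref{lem:permSignLimit}. Your route buys real simplifications: no $\eps$-wise construction of the test net (moderateness of all derivatives of $\eta_{h}$ follows from invertibility of $h$), no appeal to Lem.~\ref{lem:mvt}, and no invertibility reduction on $\lambda$, since the normalizing constant $c=\int_{-1}^{1}g'(s)^{2}\,\dd s$ is a positive real and hence invertible; what the paper's construction buys in exchange is an explicit identification of the limit without any change of variables. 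Two points you should spell out, both routine: (i) before applying Thm.~\ref{thm:changeOfVariablesInt} you must localize $\int_{a}^{b}$ to $\int_{t_{0}-h}^{t_{0}+h}$, which requires $g$ to vanish on a neighbourhood of $\pm1$ so that the integrand is the zero GSF on $[a,t_{0}-h]$ and $[t_{0}+h,b]$; (ii) the joint map $(h,s)\mapsto F_{\dot{u}\dot{u}}(t_{0}+hs,u(t_{0}+hs),\dot{u}(t_{0}+hs))\lambda^{i}\lambda^{j}g'(s)^{2}$ is a GSF on $[0,h_{1}]\times[-1,1]$ for $h_{1}\in\rc_{>0}$ below the sharp distance of $t_{0}$ to the boundary, so the argument in the proof of Lem.~\ref{lem:int} gives $C\in\gsf([0,h_{1}],\rc)$ and hence the continuity at $h=0$ that your limit passage needs.
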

\begin{proof}
Let $\lambda=[\lambda_{\eps}]\in\rc^{d}$ and $k$, $h\in\N$ be arbitrary.
Let $t=[t_{\varepsilon}]\in[a,b]$. We can assume that $t$ is a sharply
interior point, because otherwise we can use sharp continuity of the
left hand side of \eqref{eq:LH} and Lem.~\ref{lem:permSignLimit}.
We can also assume that $\lambda$ is componentwise invertible because
of Lem.~\ref{lem:componentWisePos}. We want to mimic the classical
proof of \cite[Thm. 1.3.2]{JoJo98}, but considering a ``regularized''
version of the triangular function used there (see Fig.~\ref{fig:lp}).
In particular: (1) the smoothed triangle must have an infinitesimal
height which is proportional to $\lambda$, and we will take $\rho_{\eps}^{k}$
as this infinitesimal; (2) in the proof we need that the derivative
at $t$ is equal to $\lambda$, and this justifies the drawing of
the peak in Fig.~\ref{fig:lp}; (3) to regularize the singular points
of the triangular function, we need a smaller infinitesimal, and we
can take e.g.~$\rho_{\eps}^{2k}$. So, consider a net of smooth functions
$\vartheta_{\varepsilon}$ on $[a_{\ve},b_{\ve}]$, such that the
following properties hold:

\begin{enumerate}
\item \label{enu:zero1}$\vartheta_{\ve}(x)=0$, for $x\leq t_{\varepsilon}-\rho_{\varepsilon}^{k}-\rho_{\varepsilon}^{2k}$. 
\item \label{enu:zero2}$\vartheta_{\ve}(x)=0$, for $x\geq t_{\varepsilon}+\rho_{\varepsilon}^{k}+\rho_{\varepsilon}^{2k}$. 
\item \label{enu:varthetaInT_eps}$\vartheta_{\ve}(x)=\lambda(x-t_{\eps})+\rho_{\eps}^{k}\lambda$
for $x\in[t_{\varepsilon}-\rho_{\varepsilon}^{k}+\rho_{\varepsilon}^{2k},t_{\varepsilon}]$.
\item $\vartheta_{\ve}(x)=-\lambda(x-t_{\eps})+\rho_{\eps}^{k}\lambda$
for $x\in[t_{\ve}+\rho_{\ve}^{2k},t_{\ve}+\rho_{\ve}^{k}-\rho_{\ve}^{2k}]$.
\item \label{enu:varthetaBound}$|\vartheta_{\eps}(x)|\le\rho_{\eps}^{k}\cdot|\lambda|+2\rho_{\eps}^{2k}|\lambda|$.
\item \label{enu:varthetaDotBound}$|\dot{\vartheta}_{\eps}(x)|\le2|\lambda|$
for all $x$
\end{enumerate}
\begin{figure}[h!]
\begin{center}
\def\svgwidth{0.8\textwidth} %Guckstu: Macht Bild auf 60% von die Textbreite.
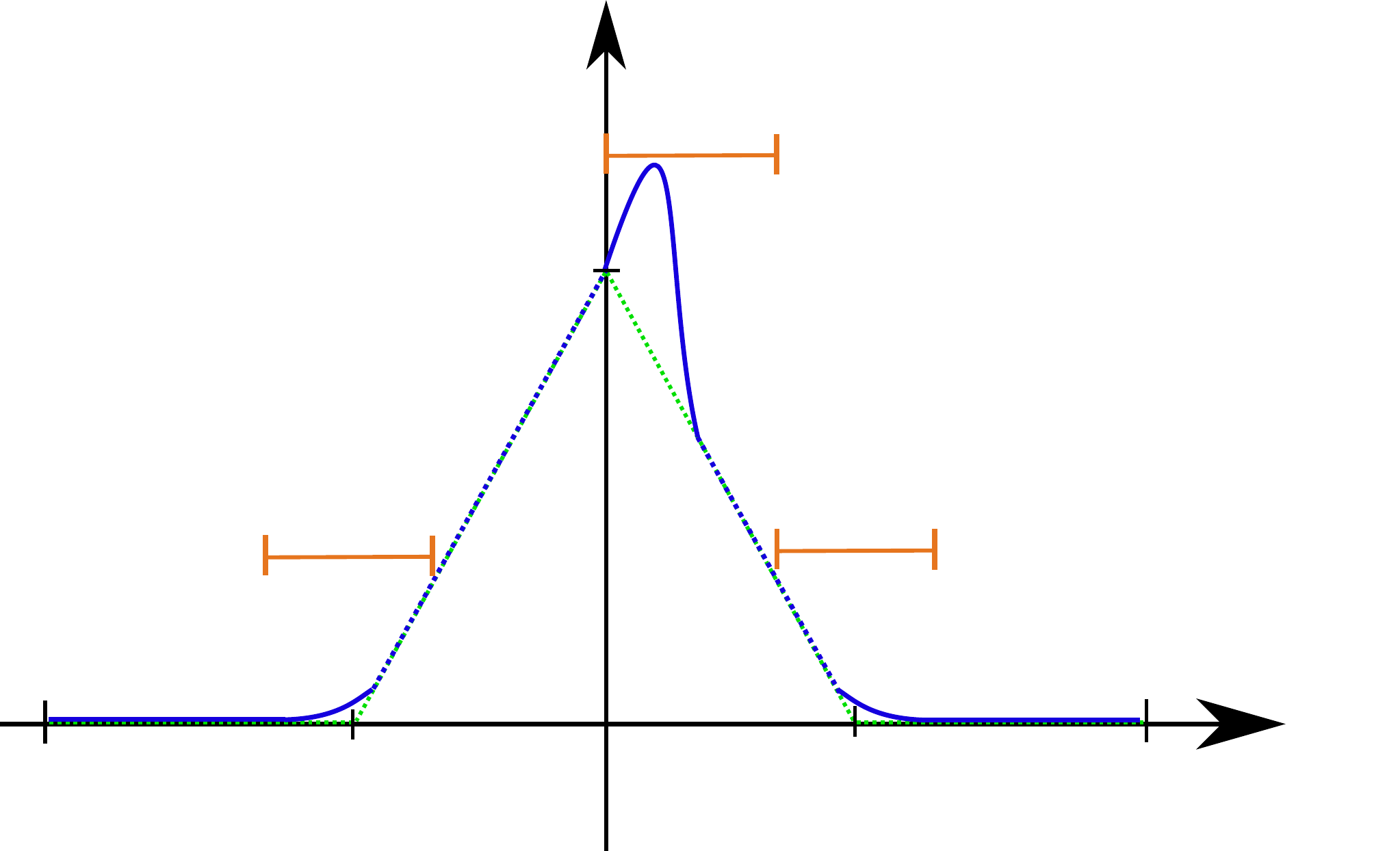
\caption{This figure illustrates the function $\vartheta_{\ve}$ we are considering
(\textcolor{blue}{blue}). The dotted \textcolor{green}{green} triangle symbolizes the function which is used in the classical proofs of the
Legendre necessary condition (cf.~\cite[Thm. 1.3.2]{JoJo98}).}\label{fig:lp}
\end{center}
\end{figure}

\noindent The net $(\vartheta_{\ve})$ defines a GSF $\vartheta:=[\vartheta_{\eps}(-)]\in\gsf_{0}$
because $t$ is a sharply interior point. Setting for simplicity $a_{k}:=\dd\rho^{k}+\dd\rho^{2k}$,
by assumption we have 
\begin{align}
0\leq\delta^{2}I(u,\vartheta)=\int_{t-a_{k}}^{t+a_{k}}F_{uu}(t,u,\dot{u})\vartheta\vartheta+2F_{\dot{u}u}(t,u,\dot{u})\dot{\vartheta}\vartheta+F_{\dot{u}\dot{u}}(t,u,\dot{u})\dot{\vartheta}\dot{\vartheta}\,\dd t.\label{eq:proof_hadamard}
\end{align}
Now, setting $M:=\max_{[a,b]}|F_{uu}(t,u,\dot{u})|$ and $N:=\max_{[a,b]}|F_{u\dot{u}}(t,u,\dot{u})|$,
by \ref{enu:varthetaBound} we
\[
\left|\int_{t-a_{k}}^{t+a_{k}}F_{uu}(t,u,\dot{u})\vartheta\vartheta\,\dd t\right|\leq M\cdot|\vartheta(t)|^{2}\cdot2a_{k}=O\left(\dd\rho^{3k}\right),
\]
where we used the evident notation $G_{k}=O\left(\dd\rho^{k}\right)$
to denote that there exists some $A\in\rc_{>0}$ such that $G_{k}\le A\cdot\dd\rho^{k}$
for all $k\in\N$. Using \ref{enu:varthetaBound} and \ref{enu:varthetaDotBound},
we analogously have
\[
\left|\int_{t-a_{k}}^{t+a_{k}}F_{\dot{u}u}(t,u,\dot{u})\dot{\vartheta}\vartheta\,\dd t\right|\leq4N\cdot|\vartheta(t)|\cdot a_{k}\cdot|\lambda|=O\left(\dd\rho^{2k}\right).
\]
Note that there always exists $C\in \rc$ such that $|\lambda|\leq C\dd \rho^k$. Therefore
\begin{equation}
\lim_{k\to+\infty}\frac{1}{2\dd\rho^{k}}\int_{t-a_{k}}^{t+a_{k}}F_{uu}(t,u,\dot{u})\vartheta\vartheta+2F_{\dot{u}u}(t,u,\dot{u})\dot{\vartheta}\vartheta\,\dd t=0.\label{eq:2IntLim0}
\end{equation}
Using Lemma \ref{lem:mvt}, \eqref{eq:2IntLim0}, \eqref{eq:proof_hadamard}
and Lem.~\ref{lem:permSignLimit}, we obtain that
\[
F_{\dot{u}\dot{u}}(t,u(t)\dot{u}(t))\dot{\vartheta}(t)\dot{\vartheta}(t)=\lim_{k\to+\infty}\frac{1}{2\dd\rho^{k}}\int_{t-a_{k}}^{t+a_{k}}F_{\dot{u}\dot{u}}(t,u,\dot{u})\dot{\vartheta}\dot{\vartheta}\,\dd t\ge0.
\]
But \ref{enu:varthetaInT_eps} yields $\dot{\vartheta}(t)=\lambda$,
and this concludes the proof.
\end{proof}

\section{\label{sec:Jacobi-fields}Jacobi fields}

As in the classical case, Thm.~\ref{thm:necessCondsForMinimizer}.\ref{enu:2ndVarPosNec}
motivates to define the \emph{accessory} integral
\begin{equation}
Q(\eta):=\int_{a}^{b}\psi(t,\eta,\dot{\eta})\,\dd t\quad\forall\eta\in\gsf_{0},\label{eq:accessInt}
\end{equation}
where 
\begin{align}
\psi(t,l,v):=F_{uu}(t,u,\dot{u})ll+2F_{u\dot{u}}(t,u,\dot{u})lv+F_{\dot{u}\dot{u}}(t,u,\dot{u})vv\label{eq:aux_prob_psi}
\end{align}
for all $t\in[a,b]$ and $(l,v)\in\rc^{d}\times\rc^{d}$. Note that
if $u$ minimizes $I$, then
\[
Q(\eta)\geq0\quad\forall\eta\in\gsf_{0}.
\]

As usual, we note that $\eta=0$ is a minimizer of the functional
$Q$ and we are interested whether there are others. In order to solve
this problem, we consider the \emph{Euler-Lagrange }equations for
$Q$, which are given by
\begin{align}
\der{t}\psi_{\dot{\eta}}(t,\eta,\dot{\eta})=\psi_{\eta}(t,\eta,\dot{\eta}),
\end{align}
in other words 
\begin{align}
\der{t}\left\{ F_{\dot{u}\dot{u}}(t,u,\dot{u})\dot{\eta}+F_{u\dot{u}}(t,u,\dot{u})\eta\right\} =F_{u\dot{u}}(t,u,\dot{u})\dot{\eta}+F_{uu}(t,u,\dot{u})\eta.\label{eq:jacobi}
\end{align}
Since $u$ is given, \eqref{eq:jacobi} is an $\rc$-linear system
of second order equations in the unknown GSF $\eta$ and with time
dependent coefficients in $\rc$. We call \eqref{eq:jacobi} the \emph{Jacobi
equations} \emph{of }$I$ \emph{with respect to} $u$. As in the classical
setting, we define 
\begin{defn}
A solution $\eta\in\gsf_{0}$ of the Jacobi equations \eqref{eq:jacobi}
is called a \emph{Jacobi field along} $u_0=u$.
\end{defn}
The following result confirms that the intuitive interpretation of
a Jacobi field as the tangent space of a smooth family of solutions
of the Euler-Lagrange equation still holds in this generalized setting.
\begin{lem}
Let $u\in\gsf([-\delta,\delta]\times[a,b],\rc^{d})$, where $\delta\in\rc_{>0}$.
We write $u_{s}:=u(s,-)$ for all $s\in[-\delta,\delta]$. Assume
that each $u_{s}$ satisfies the Euler-Lagrange equations \eqref{eq:ELE}:
\[
\der{t}F_{\dot{u}}(t,u_{s},\dot{u}_{s})=F_{u}(t,u_{s},\dot{u}_{s})\quad\forall s\in[-\delta,\delta].
\]
Then
\[
\eta(t):=\left.\der{s}\right|_{0}u_{s}(t)\quad\forall t\in[a,b]
\]
is a Jacobi field along $u$.
\end{lem}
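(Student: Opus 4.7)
The strategy is a direct translation of the classical argument: one differentiates the Euler--Lagrange identity $\der{t}F_{\dot u}(t,u_s,\dot u_s) = F_u(t,u_s,\dot u_s)$ with respect to $s$ at $s=0$ and recognises the resulting equation as \eqref{eq:jacobi}. Since the identity is assumed to hold in $\rc$ for every $s\in[-\delta,\delta]$, its $s$-derivative at $0$ also vanishes, so the proof reduces to computing both sides of that derivative.

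For the right-hand side, the chain rule for GSF (Thm.~\ref{thm:rulesDer}(v)) applied to the composition $s\mapsto F_u(t,u_s(t),\dot u_s(t))$ yields
\[
\left.\der{s}\right|_{0} F_u(t,u_s,\dot u_s) \;=\; F_{uu}(t,u,\dot u)\,\eta \;+\; F_{u\dot u}(t,u,\dot u)\,\left.\der{s}\right|_{0}\!\dot u_s,
\]
where $\eta := \der{s}|_{0}u_s$. To identify $\der{s}|_{0}\dot u_s$ with $\dot\eta$ I need to commute $\partial_s$ and $\der{t}$ on the two-variable GSF $u$. This is legitimate because $u$ is represented by a net of smooth functions $u_\eps\in\cinfty$, for which the classical Schwarz theorem gives $\partial_s\partial_t u_\eps = \partial_t\partial_s u_\eps$; by Thm.~\ref{thm:propGSF} the resulting GSF is independent of the representative, so $\der{s}\dot u_s = \der{t}\!\left(\der{s}u_s\right)$ as generalized functions. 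The same interchange, applied this time to the outer $\der{t}$ on the left-hand side of the Euler--Lagrange equation followed by another application of the chain rule, turns the left-hand side into
\[
\der{t}\!\left\{\,F_{\dot u\dot u}(t,u,\dot u)\,\dot\eta + F_{\dot u u}(t,u,\dot u)\,\eta\,\right\}.
\]

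Equating the two computed sides gives precisely the Jacobi equation \eqref{eq:jacobi}, so $\eta$ is a solution. Concerning the boundary condition $\eta\in\gsf_0$ demanded by the definition: a natural reading of the statement is that the family $u_s$ is assumed to have fixed endpoints $u_s(a)=u_0(a)$ and $u_s(b)=u_0(b)$, in which case differentiating at $s=0$ yields $\eta(a)=0=\eta(b)$ automatically; without such a hypothesis one obtains only a solution of \eqref{eq:jacobi}, not necessarily an element of $\gsf_0$. The main technical obstacle is therefore not the computation itself, which is routine once the GSF chain rule is available, but the careful justification of the two commutations $\partial_s\,\der{t} = \der{t}\,\partial_s$ at the level of GSF; all remaining steps reduce to an $\eps$-wise application of classical calculus combined with the representation-independence results of Thm.~\ref{thm:propGSF}.
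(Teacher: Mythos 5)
Your proposal is correct and follows essentially the same route as the paper, whose proof is exactly the one-line computation of $\left.\der{s}\right|_{0}$ applied to the Euler--Lagrange identity, with the chain rule and the interchange $\partial_{s}\der{t}=\der{t}\partial_{s}$ (justified $\eps$-wise) left implicit. Your side remark about the endpoint condition $\eta\in\gsf_{0}$ is a fair observation about the statement, which the paper's proof also does not address.
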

\begin{proof}
A straight forward calculation gives:
\begin{align*}
0 & =\left.\der{s}\right|_{0}\left(\der{t}F_{\dot{u}}(t,u_{s},\dot{u}_{s})-F_{u}(t,u_{s},\dot{u}_{s})\right)\\
 & =\der{t}\left(F_{\dot{u}\dot{u}}(t,u,\dot{u})\dot{\eta}+F_{u\dot{u}}(t,u,\dot{u})\eta\right)-F_{u\dot{u}}(t,u,\dot{u})\dot{\eta}-F_{uu}(t,u,\dot{u})\eta.
\end{align*}
\end{proof}

\subsection{Conjugate points and Jacobi's theorem}

The classical key result concerning Jacobi fields relates
conjugate points and minimizers. The main aim of the present section
is to derive this theorem in our generalized framework by extending
the ideas of the proof of \cite[Thm.~1.3.4]{JoJo98}.

A crucial notion is hence that of piecewise GSF:
\begin{defn}
We call \emph{piecewise GSF} an $n$-tuple $(f_{1},\ldots,f_{n})$
such that

\begin{enumerate}
\item \label{enu:tupleOfGSF}For all $i=1,\ldots,n$ there exist $a_{i}$,
$a_{i+1}\in\rc$ such that $a_{i}<a_{i+1}$ and $f_{i}\in\gsf([a_{i},a_{i+1}],\rc^{d})$.
Note that $[a,b]=[a',b']$ implies $a=a'$ and $b=b'$ because the
relation $\le$ is antisymmetric. Therefore, the points $a_{i}$,
$a_{i+1}$ are uniquely determined by the set-theoretical function
$f_{i}$.
\item \label{enu:evaluationWellDef}For all $i=1,\ldots,n$, we have $f_{i}(a_{i+1})=f_{i+1}(a_{i+1})$.
\end{enumerate}
Every pointwise GSF $(f_{1},\ldots,f_{n})$ defines a set-theoretical
function:
\begin{enumerate}[resume]
\item \label{enu:evaluationPGSF}For all $t\in\bigcup_{i=1}^{n}[a_{i},a_{i+1}]$,
we set $(f_{1},\ldots,f_{n})(t):=f_{i}(t)$ if $t\in[a_{i},a_{i+1}]$.
\end{enumerate}
We also use the arrow notation $(f_{1},\ldots,f_{n}):\bigcup_{i=1}^{n}[a_{i},a_{i+1}]\ra\rc^{d}$
to say that both \ref{enu:tupleOfGSF} and \ref{enu:evaluationWellDef}
hold.

\end{defn}
\begin{rem}
~

\begin{enumerate}
\item Clearly, $t\in[a_{i},a_{i+1}]\cap[a_{i+1},a_{i+2}]$ implies $t=a_{i+1}$,
so that condition \ref{enu:evaluationWellDef} yields that the evaluation
\ref{enu:evaluationPGSF} is well defined.
\item Since the order relation $\le$ is not a total one, we do not have
that $[a_{i},a_{i+1}]\cup[a_{i+1},a_{i+2}]=[a_{i},a_{i+2}]$.
\item If $\nu:[a_{1},a_{2}]\cup[a_{2},a_{3}]\ra\rc^{d}$ is a set-theoretical
function originating from a piecewise GSF $(f_{1},f_{2})$, then neither
the GSF $f_{i}$ nor the points $a_{i}$ are uniquely determined by
$\nu$. For this reason, we prefer to stress our notations with symbols
like $(f_{1},f_{2})(t)\in\rc^{d}$.
\item Every GSF $f\in\gsf([a_{1},a_{2}],\rc^{d})$ can be seen as a particular
case of a piecewise GSF.
\item If $(g_{1},\ldots,g_{n})$, $(f_{1},\ldots,f_{n}):\bigcup_{i=1}^{n}[a_{i},a_{i+1}]\ra\rc^{d}$
and $r\in\rc$, then also $(g_{1},\ldots,g_{n})+(f_{1},\ldots,f_{n}):=(g_{1}+f_{1},\ldots,g_{n}+f_{n})$
and $r\cdot(f_{1},\ldots,f_{n}):=(r\cdot f_{1},\ldots,r\cdot f_{n})$
are piecewise GSF, and we hence have a structure of $\rc$-module.
\item If $(f_{1},\ldots,f_{n}):\bigcup_{i=1}^{n}[a_{i},a_{i+1}]\ra\rc^{d}$
and $F\in\gsf(\rc^{d},\rc^{n})$, then we can define the composition
$F\circ(f_{1},\ldots,f_{n}):=(F\circ f_{1},\ldots,F\circ f_{n}):\bigcup_{i=1}^{n}[a_{i},a_{i+1}]\ra\rc^{n}$. 
\end{enumerate}
\end{rem}
Piecewise GSF inherit from their defining components a well-behaved
differential and integral calculus. The former is even more general
and taken from \cite{Ar-Fe-Ju05}.
\begin{defn}
Let $x=[x_{\eps}]\in\rc$, then

\begin{enumerate}
\item $\nu(x):=\sup\left\{ b\in\R\mid|x_{\eps}|=O(\rho_{\eps}^{b})\right\} \in\R\cup\{+\infty\}$.
\item $|x|_{\text{e}}:=e^{-\nu(x)}\in\R_{\ge0}$.
\item $\dd\rho(x):=\dd\rho^{-\log|x|_{\text{e}}}\in\rc_{>0}$.
\end{enumerate}
\end{defn}
\noindent It is worth noting that $|-|_{\text{e}}:\rc\ra\R_{\ge0}$
induces an ultrametric on $\rc$ that generates exactly the sharp
topology, see e.g.~\cite{AJ:01,GiKu13} and references therein. However,
we will not use this ultrametric structure in the present paper, and
we only introduced it to get an invertible infinitesimal $\dd\rho(x)$
that goes to zero with $x$: it is in fact easy to show that
\[
\lim_{x\to0}\frac{x}{\dd\rho(x)}=1
\]
in the sharp topology.
\begin{defn}
Let $T\subseteq\rc$ and let $f:T\ra\rc^{d}$ be an arbitrary set-theoretical
function. Let $t_{0}\in T$ be a sharply interior point of $T$. Then
we say that $f$ \emph{is differentiable at $t_{0}$} if\footnote{This definition is based on \cite[Def. 2.2]{Ar-Fe-Ju05}.}
\begin{equation}
\exists m\in\rc^{d}:\ \lim_{h\to0}\frac{f(t+h)-f(t_{0})-m\cdot h}{\dd\rho(h)}=0.\label{eq:differentiability}
\end{equation}
\end{defn}
\noindent In this case, using Landau little-oh notation, we can hence
write
\begin{equation}
f(t+h)=f(t_{0})+m\cdot h+o(\dd\rho(h))\quad\text{as }h\to0.\label{eq:Peano1st}
\end{equation}
As in the classical case, \eqref{eq:Peano1st} implies the uniqueness
of $m\in\rc^{d}$, so that we can define $f'(t_{0}):=\dot{f}(t_{0}):=m$,
and the usual elementary rules of differential calculus. By the Fermat-Reyes
theorem, this definition of derivative generalizes that given for
GSF.

In particular, this notion of derivative applies to the set-theoretical
function induced by a piecewise GSF $(f_{1},\ldots,f_{n})$. We therefore
have that $(f_{1},\ldots,f_{n})(-)$ is differentiable at each $a_{i}<t<a_{i+1}$,
and $(f_{1},\ldots,f_{n})'(t)=f_{i}'(t)$, but clearly there is no guarantee
that $(f_{1},\ldots,f_{n})(-)$ is also differentiable at each point
$a_{i}$.

The notion of definite integral is naturally introduced in the following
\begin{defn}
\label{def:intPGSF}Let $(f_{1},\ldots,f_{n}):\bigcup_{i=1}^{n}[a_{i},a_{i+1}]\ra\rc^{d}$
be a piecewise GSF, then
\[
\int_{a_{1}}^{a_{n+1}}(f_{1},\ldots,f_{n})(t)\diff{t}:=\sum_{i=1}^{n}\int_{a_{i}}^{a_{i+1}}f_{i}(t)\diff{t}.
\]
\end{defn}
\noindent Since our main aim in using piecewise GSF is to prove Jacobi's
theorem, we do not need to prove that the usual elementary rules of
integration hold, since we will always reduce to integrals of GSF.

Having a notion of derivative and of definite integral, also for piecewise
GSF we can consider functionals
\begin{equation}
\nu:=(f_{1},\ldots,f_{n}),\ a_{1}=a,\ a_{n}=b\then I(\nu):=\int_{a}^{b}F(t,\nu(t),\dot{\nu}(t))\dd t\in\rc,\label{eq:functionalsPGSF}
\end{equation}
and the concept of \emph{piecewise GSF (global) minimizer}: $I(\nu)\le I(\tilde{\nu})$
for all $\tilde{\nu}\in\gsf_{0}$. For the proof of Jacobi's theorem,
we will only need this particular notion of global minimizer. Note
explicitly that in \eqref{eq:functionalsPGSF} we only need existence
of right and left derivatives of GSF, because of Def.~\ref{def:intPGSF}
and of Def.~\ref{def:integral} of definite integral of GSF.

Classically, several proofs of Jacobi's theorem use both some form
of implicit function theorem and of uniqueness of solution for linear
ODE.
\begin{thm}[Implicit function theorem]
\label{thm:ImFT}Let $U\subseteq\rc^{n}$, $V\subseteq\rc^{d}$ be
sharply open sets. Let $F\in\gsf(U\times V,\rc^{d})$ and $(x_{0},y_{0})\in U\times V$.
If $\partial_{2}F(x_{0},y_{0})$ is invertible in $L(\rti^{d},\rti^{d})$,
then there exists a sharply open neighbourhood $U_{1}\times V_{1}\subseteq U\times V$
of $(x_{0},y_{0})$ such that
\begin{equation}
\forall x\in U_{1}\,\exists!y_{x}\in V_{1}:\ F(x,y_{x})=F(x_{0},y_{0}).\label{eq:implicitEq}
\end{equation}
Moreover, the function $f(x):=y_{x}$ for all $x\in U_{1}$ is a GSF
$f\in\gsf(U_{1},V_{1})$ and satisfies
\begin{equation}
Df(x)=-\left(\partial_{2}F(x,f(x))\right)^{-1}\circ\partial_{1}F(x,f(x)).\label{eq:implicitDer}
\end{equation}
\end{thm}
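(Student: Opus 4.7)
The plan is to reduce the statement to the local inverse function theorem, Thm.~\ref{thm:localIFTSharp}, via the classical trick of enlarging $F$ to a map that preserves the first coordinate. Define
\[
G\in\gsf(U\times V,\rc^{n}\times\rc^{d}),\qquad G(x,y):=(x,F(x,y)),
\]
which is a GSF because its two components are. The differential at $(x_{0},y_{0})$ has the block form
\[
DG(x_{0},y_{0})=\begin{pmatrix}I_{n} & 0\\ \partial_{1}F(x_{0},y_{0}) & \partial_{2}F(x_{0},y_{0})\end{pmatrix},
\]
and by hypothesis $\partial_{2}F(x_{0},y_{0})$ is invertible in $L(\rc^{d},\rc^{d})$, so the explicit block-inverse formula produces an inverse in $L(\rc^{n+d},\rc^{n+d})$. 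Hence Thm.~\ref{thm:localIFTSharp} supplies sharp neighbourhoods $W\subseteq U\times V$ of $(x_{0},y_{0})$ and $W'$ of $G(x_{0},y_{0})=(x_{0},F(x_{0},y_{0}))$ such that $G\colon W\to W'$ is a bijection with inverse in $\gsf(W',W)$.

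Next I would shrink $W$ to a product neighbourhood. Because the sharp topology on $\rc^{n+d}$ is generated by products of sharp balls, there exist $r_{1},r_{2}\in\rc_{>0}$ with $B_{r_{1}}(x_{0})\times B_{r_{2}}(y_{0})\subseteq W$; setting $U_{1}:=B_{r_{1}}(x_{0})$ and shrinking the image accordingly so that $(x,F(x_{0},y_{0}))\in W'$ for all $x\in U_{1}$ (possible by sharp continuity of $x\mapsto(x,F(x_{0},y_{0}))$ at $x_{0}$), we may take $V_{1}:=B_{r_{2}}(y_{0})$. Since $G$ preserves the first coordinate, its inverse has the form $G^{-1}(x,z)=(x,H(x,z))$ with $H$ a GSF. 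For $x\in U_{1}$ the equation $F(x,y)=F(x_{0},y_{0})$ in $y\in V_{1}$ is equivalent to $G(x,y)=(x,F(x_{0},y_{0}))\in W'$, and the bijectivity of $G\colon W\to W'$ gives a unique solution $y_{x}=H(x,F(x_{0},y_{0}))$, proving \eqref{eq:implicitEq}. Defining $f(x):=H(x,F(x_{0},y_{0}))$ yields $f\in\gsf(U_{1},V_{1})$ by closure of GSF under composition.

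Finally, differentiating the identity $F(x,f(x))=F(x_{0},y_{0})$ on $U_{1}$ by the chain rule in Thm.~\ref{thm:rulesDer} gives
\[
\partial_{1}F(x,f(x))+\partial_{2}F(x,f(x))\circ Df(x)=0,
\]
and since $\partial_{2}F(x_{0},y_{0})$ is invertible, sharp continuity of $(x,y)\mapsto\partial_{2}F(x,y)$ together with openness of the set of invertible operators in $L(\rc^{d},\rc^{d})$ (for the sharp-operator norm) allow us to shrink $U_{1}\times V_{1}$ further so that $\partial_{2}F(x,f(x))$ remains invertible, yielding \eqref{eq:implicitDer}.

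The main obstacle I foresee is the last point: guaranteeing that $\partial_{2}F(x,f(x))$ stays invertible throughout a sharp neighbourhood. Invertibility in $\rc$ is the delicate strict-positivity condition of Lem.~\ref{lem:mayer}, and since $\le$ is only partial, classical ``small-perturbation-of-invertible'' arguments must be carried out representative-wise: one writes $\partial_{2}F(x,f(x))=\partial_{2}F(x_{0},y_{0})\bigl(I+A(x)\bigr)$ with $A(x_{0})=0$, uses that GSF are locally Lipschitz in the sharp topology (Thm.~\ref{thm:propGSF}.\ref{enu:locLipSharp}) to bound $\no{A(x)}$ by an invertible infinitesimal on a small sharp ball, and invokes the Neumann series in $L(\rc^{d},\rc^{d})$, whose convergence in the sharp topology follows from the moderateness of the defining nets. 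All remaining steps (block invertibility, existence of product sharp neighbourhoods, chain rule) are direct.
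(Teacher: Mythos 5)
Your proposal is correct and follows essentially the same route as the paper: the paper's (very terse) proof likewise reduces the statement to the local inverse function theorem Thm.~\ref{thm:localIFTSharp} via the standard auxiliary map $G(x,y)=(x,F(x,y))$ with block-triangular differential. The only divergence is at the point you flag as the main obstacle: where you propose a representative-wise Neumann-series perturbation argument to keep $\partial_{2}F(x,f(x))$ invertible, the paper instead observes that $\det\left[\partial_{2}F(-,-)\right]$ is itself a GSF with $\left|\det\left[\partial_{2}F(x_{0},y_{0})\right]\right|\in\rc_{>0}$, so sharp continuity together with Lem.~\ref{lem:mayer} keeps the determinant invertible on a whole sharp neighbourhood and the adjugate formula then exhibits the inverse matrix directly as a GSF --- a cleaner route that avoids any convergence argument.
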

\begin{proof}
The usual deduction of the implicit function theorem from the inverse
function theorem in Banach spaces can be easily adapted using Thm.~\ref{thm:localIFTSharp}
and noting that $\det\left[\partial_{2}F(-,-)\right]$ is a GSF
such that $\left|\det\left[\partial_{2}F(x_{0},y_{0})\right]\right|\in\rc_{>0}$.
\end{proof}
In the next theorem, the dependence of the entire theory on the
initial infinitesimal net $\rho=(\rho_{\eps})\downarrow0$ plays an
essential role. Indirectly, the same important role will reverberate
in the final Jacobi's theorem.
\begin{thm}[Solution of first order linear ODE]
\label{thm:linearODE}Let $A\in\gsf([a,b],\rc^{d\times d})$, where
$a$, $b\in\rc$, $a<b$, and $t_{0}\in[a,b]$, $y_{0}\in\rc^{d}$.
Assume that
\begin{equation}
\left|\int_{t_{0}}^{t}A(s)\diff{s}\right|\le-C\cdot\log\dd\rho\quad\forall t\in[a,b],\label{eq:logHP}
\end{equation}
where $C\in\R_{>0}$. Then there exists one and only one $y\in\gsf([a,b],\rc^{d})$
such that
\begin{equation}
\begin{cases}
y'(t)=A(t)\cdot y(t) & \text{if }t\in[a,b]\\
y(t_{0})=y_{0}
\end{cases}\label{eq:LinearODE}
\end{equation}
Moreover, this $y$ is given by $y(t)=\exp\left(\int_{t_{0}}^{t}A(s)\diff{s}\right)\cdot y_{0}$
for all $t\in[a,b]$.
\end{thm}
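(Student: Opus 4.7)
The plan is to construct $y$ by solving the ODE $\varepsilon$-wise and then checking, using the logarithmic hypothesis, that the resulting net is $\rho$-moderate and assembles into a GSF. Pick a defining net $A_\varepsilon \in \mathcal{C}^\infty(\Omega_\varepsilon,\R^{d\times d})$ for $A$, representatives $t_0=[t_{0,\varepsilon}]$, $y_0=[y_{0,\varepsilon}]$, $a=[a_\varepsilon]$, $b=[b_\varepsilon]$ with $a_\varepsilon<b_\varepsilon$, and let $y_\varepsilon \in \mathcal{C}^\infty([a_\varepsilon,b_\varepsilon],\R^d)$ be the unique classical solution of $y_\varepsilon'=A_\varepsilon y_\varepsilon$, $y_\varepsilon(t_{0,\varepsilon})=y_{0,\varepsilon}$, which exists globally on $[a_\varepsilon,b_\varepsilon]$ by standard linear ODE theory. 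The candidate is $y:=[y_\varepsilon(-)]$.

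The first and main obstacle is $\rho$-moderateness of $(y_\varepsilon(x_\varepsilon))$ for $x=[x_\varepsilon]\in[a,b]$, together with moderateness of all its derivatives. The natural bound from Gronwall is $|y_\varepsilon(t)|\le |y_{0,\varepsilon}|\exp\!\bigl(\int_{t_{0,\varepsilon}}^t |A_\varepsilon(s)|\,\mathrm ds\bigr)$, but the hypothesis controls only $|\int A|$, not $\int|A|$; closing this gap is the crux of the argument. The cleanest route is to argue directly via the explicit formula: set $M_\varepsilon(t):=\int_{t_{0,\varepsilon}}^t A_\varepsilon(s)\,\mathrm ds$ and appeal to the hypothesis, which gives $|M_\varepsilon(t)|\le -C\log\rho_\varepsilon$ for all representatives and $\varepsilon$ small, hence $|\exp(M_\varepsilon(t))|\le \exp(-C\log\rho_\varepsilon)=\rho_\varepsilon^{-C}$. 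Since $A$ is a GSF, higher derivatives of $y_\varepsilon$ are obtained recursively from $y_\varepsilon^{(k+1)}=\sum_{j=0}^k\binom{k}{j}A_\varepsilon^{(j)}y_\varepsilon^{(k-j)}$, whose moderateness follows from that of the derivatives of $A_\varepsilon$ (definition of GSF) combined with the polynomial-in-$\rho_\varepsilon^{-1}$ bound on $y_\varepsilon$ just obtained.

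Once moderateness is in place, independence of the representatives $(A_\varepsilon)$, $(t_{0,\varepsilon})$, $(y_{0,\varepsilon})$ is obtained by writing the difference of two candidate solutions as an inhomogeneous linear ODE whose source term is $\rho$-negligible on each sharply bounded piece, and applying the same exponential/log bound to conclude that the difference of the solutions is $\rho$-negligible. That $y\in\gsf([a,b],\rc^d)$ actually satisfies \eqref{eq:LinearODE} is then an $\varepsilon$-wise statement: the Fermat--Reyes theorem (Thm.~\ref{thm:FR-forGSF}) and Thm.~\ref{thm:existenceUniquenessPrimitives} give
\[
y'(t)=[y_\varepsilon'(t_\varepsilon)]=[A_\varepsilon(t_\varepsilon)y_\varepsilon(t_\varepsilon)]=A(t)\cdot y(t),\qquad y(t_0)=[y_{0,\varepsilon}]=y_0.
\]

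For uniqueness in $\gsf$, let $z$ be another solution and set $w:=y-z\in\gsf([a,b],\rc^d)$, so that $w'=Aw$ and $w(t_0)=0$. Iterating the Volterra identity $w(t)=\int_{t_0}^t A(s)w(s)\,\mathrm ds$ produces, for every $k\in\N$, a bound of the form $|w(t)|\le \tfrac{1}{k!}(-C\log\dd\rho)^k \cdot\max_{[a,b]}|w|$ after using the hypothesis to replace the iterated integrals of $A$; letting $k\to\infty$ in the sharp topology (or, equivalently, choosing $k$ large in terms of the order of $\max|w|$) forces $w=0$ by Rem.~\ref{rem:limits}.\ref{enu:x_is_zero}. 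Finally, the stated closed form $y(t)=\exp\!\bigl(\int_{t_0}^t A(s)\,\mathrm ds\bigr)\cdot y_0$ is verified by differentiating the right-hand side via Thm.~\ref{thm:rulesDer} (it is a GSF by the moderateness of $\exp(M_\varepsilon)$ already established) and invoking the uniqueness just proved; under the tacit commutativity of $A(t)$ with $\int_{t_0}^t A$, this differentiation yields $A(t)\cdot \exp(\int_{t_0}^t A)\cdot y_0$ as required.
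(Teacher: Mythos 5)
Your overall architecture (exponential formula for existence, $\eps$-wise verification via Fermat--Reyes, separate uniqueness argument) matches the paper's, and you correctly isolate the real point of hypothesis \eqref{eq:logHP}: it yields $|\exp(\int_{t_0}^t A)|\le e^{|\int_{t_0}^t A|}\le\dd\rho^{-C}$, which is what makes the exponential, and recursively its derivatives, $\rho$-moderate. You also rightly flag the commutativity of $A(t)$ with $\int_{t_0}^t A$ as a tacit assumption; the paper uses it silently both when asserting that $\exp(\int_{t_0}^t A)\cdot y_0$ solves the ODE and in the uniqueness step $hAz=Ahz$. However, there is an unresolved tension in your existence step: you construct $y_\eps$ as the classical $\eps$-wise solution of the ODE and then bound it ``via the explicit formula'' $\exp(M_\eps(t))y_{0,\eps}$ --- but these are two different functions unless the commutativity assumption holds, so the bound $|\exp(M_\eps(t))|\le\rho_\eps^{-C}$ does not transfer to the actual solution net. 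The only bound available for the genuine classical solution is Gronwall's, which needs $\int|A_\eps|$, exactly the quantity the hypothesis does not control. The paper sidesteps this by \emph{defining} $y$ to be the exponential formula and never invoking the classical $\eps$-wise solution at all.

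The more serious gap is in your uniqueness argument. First, the Volterra iteration bounds the $k$-fold iterated integral of $A(s_1)\cdots A(s_k)$ over a simplex by $\frac{1}{k!}\left(\int_{t_0}^t|A(s)|\diff{s}\right)^k$, not by $\frac{1}{k!}\left|\int_{t_0}^t A\right|^k$; so the hypothesis cannot be ``used to replace the iterated integrals'' --- the same $|\int A|$ versus $\int|A|$ gap you identified in the existence part reopens here, and now there is no explicit formula to fall back on. Second, even granting that bound, $\frac{1}{k!}(-C\log\dd\rho)^k$ does \emph{not} tend to $0$ in the sharp topology as $k\to\infty$: for each fixed $k$ it is an infinite generalized number (since $\log(1/\rho_\eps)\to+\infty$ as $\eps\to0$), so it is never $\le\dd\rho^m$, and no choice of $k$ ``large in terms of the order of $\max|w|$'' lets you invoke Rem.~\ref{rem:limits}. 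The paper's uniqueness proof avoids all of this: it multiplies a second solution $z$ by the integrating factor $h=\exp(-\int_{t_0}^t A)$, computes $(hz)'=0$ (again using the tacit commutativity), and concludes by the uniqueness of primitives, Thm.~\ref{thm:existenceUniquenessPrimitives}. You should replace the Volterra iteration by this integrating-factor argument, or by some other device that only ever needs the moderate bound on $\exp(\pm\int_{t_0}^t A)$ supplied by \eqref{eq:logHP}.
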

\begin{proof}
We first note that $\exp\left(\int_{t_{0}}^{t}A(s)\diff{s}\right)=\left[\exp\left(\int_{t_{0\eps}}^{t_{\eps}}A_{\eps}(s)\diff{s}\right)\right]$,
where $t=[t_{\eps}]$, $t_{0}=[t_{0\eps}]$ and $A(s)=[A_{\eps}(s_{\eps})]\in\rc^{d\times d}$.
This exponential matrix in $\rc^{d\times d}$ is a GSF because for
all $t\in[a,b]$, we have
\[
\exp\left(\int_{t_{0}}^{t}A(s)\diff{s}\right)\le e^{-C\log\dd\rho}\le\dd\rho^{-C}.
\]
Therefore, all values of $y(t)=\exp\left(\int_{t_{0}}^{t}A(s)\diff{s}\right)\cdot y_{0}$
are $\rho$-moderate. Analogously, one can prove that also $y^{(k)}(t)$
are moderate for all $k\in\N$ and $t\in[a,b]$. Considering that
derivatives can be calculated $\eps$-wise, we have that this GSF
$y$ satisfies \eqref{eq:LinearODE}, and this proves the existence
part.

To show uniqueness, we can proceed as in the smooth case. Assume that
$z\in\gsf([a,b],\rc^{d})$ satisfies \eqref{eq:LinearODE}, and set
$h(t):=\exp\left(-\int_{t_{0}}^{t}A(s)\diff{s}\right)$ for all $t\in[a,b]$.
Since $h'=-A\cdot h$, we have $(hz)'=h'z+hz'=-Ahz+hAz = -Ahz + Ahz=0$. %$z'-A\cdot z=0=z'h-A\cdot z\cdot h=z'h+h'z=(h\cdot z)'$.
From uniqueness of primitives of GSF, Thm.~\ref{thm:existenceUniquenessPrimitives},
we have that $h\cdot z=h(t_{0})\cdot z(t_{0})=y_{0}$. Therefore $z=h^{-1}\cdot y_{0}$.
\end{proof}
If $\alpha$, $\beta\in\rc$, we write $\alpha=O_{\R}(\beta)$ to
denote that there exists $C\in\R_{>0}$ such that $|\alpha|\le C\cdot|\beta|$.
Therefore, assumption \eqref{eq:logHP} can be written as $\int_{t_{0}}^{t}A(s)\diff{s}=O_{\R}(\log\dd\rho)$.
Note that this assumption is weaker, in general, than
\[
(b-a)\cdot\max_{t\in[a,b]}|A(t)|=O_{\R}(\log\dd\rho).
\]

The following result is the key regularity property we need to prove
Jacobi's theorem.
\begin{lem}
\label{lem:regularityResult}Let $a$, $a'$, $b\in\rc$, with $a<a'<b$,
and let $K\in\gsf([a,b]\times\rc^{d}\times\rc^{d},\rc)$. Let $\nu=(\eta,\beta):[a,a']\cup[a',b]\ra\rc^{d}$
be a piecewise GSF which satisfies the Euler-Lagrange equation
\begin{equation}
K_{u}(t,\nu(t),\dot{\nu}(t))-\der{t}K_{\dot{u}}(t,\nu(t),\dot{\nu}(t))=0\quad\forall t\in[a,a')\cup(a',b].\label{eq:ELEregularityLem}
\end{equation}
Finally, assume that $\det\left(K_{\dot{u}_{i}\dot{u}_{j}}(a',\eta(a'),\dot{\eta}(a'))_{i,j=i,\ldots,d}\right)\in\rc$
is invertible, then
\begin{equation}
\lim_{\substack{t\to a'\\
t<a'
}
}\dot{\nu}(t)=\lim_{\substack{t\to a'\\
a'<t
}
}\dot{\nu}(t)=\dot{\eta}(a').\label{eq:limitsDxSin}
\end{equation}
In particular, if $\beta\equiv0|_{[a',b]}$, then $\dot{\eta}(a')=0$.
\end{lem}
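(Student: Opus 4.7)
My plan is to reduce the matching of the one-sided velocities of $\nu$ at $a'$ to a continuity argument for the momentum-like quantity $p(t):=K_{\dot u}(t,\nu(t),\dot\nu(t))$, by applying the Implicit Function Theorem (Thm~\ref{thm:ImFT}) to invert $v\mapsto K_{\dot u}(t,u,v)$ locally.

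Set $(t_0,u_0,v_0):=(a',\eta(a'),\dot\eta(a'))$ and $w_0:=K_{\dot u}(t_0,u_0,v_0)$, and consider the GSF $G(t,u,v,w):=K_{\dot u}(t,u,v)-w$ in a sharp neighborhood of $(t_0,u_0,v_0,w_0)$. Since invertibility of the determinant of $K_{\dot u\dot u}(t_0,u_0,v_0)$ in $\rc$ is equivalent to invertibility of the linear map itself, $\partial_v G(t_0,u_0,v_0,w_0)$ is invertible in $L(\rc^d,\rc^d)$, and Thm~\ref{thm:ImFT} produces a sharp open neighborhood $W$ of $(t_0,u_0,w_0)$ and a GSF $\Phi\in\gsf(W,\rc^d)$ with $\Phi(t_0,u_0,w_0)=v_0$, $K_{\dot u}(t,u,\Phi(t,u,w))=w$ on $W$, and local uniqueness of $v$ solving $K_{\dot u}(t,u,v)=w$. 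Separately, by closure of GSF under composition $p$ is a GSF on each of the two pieces $[a,a']$ and $[a',b]$; and by Thm~\ref{thm:existenceUniquenessPrimitives} together with the piecewise EL equation, $\dot p=K_u(t,\nu,\dot\nu)$ on each piece, so in particular $\lim_{t\to a',\,t<a'}p(t)=w_0$ while the right-sided limit equals $K_{\dot u}(a',\beta(a'),\dot\beta(a'))$.

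The crucial step---and the main obstacle---is to establish the Weierstrass--Erdmann-type continuity of $p$ across $a'$. Classically this is automatic for minimizers; in the present setting I would obtain it by reading the piecewise EL hypothesis in the strong sense that $t\mapsto K_{\dot u}(t,\nu(t),\dot\nu(t))$ extends to a globally continuous (and piecewise differentiable) GSF on $[a,b]$ with piecewise derivative $K_u$, so that the left and right values at $a'$ coincide with the common value $w_0$. Once this is granted, $(t,\nu(t),p(t))\in W$ for all $t$ in a sharp neighborhood of $a'$ on either side, and the uniqueness clause of Thm~\ref{thm:ImFT} forces $\dot\nu(t)=\Phi(t,\nu(t),p(t))$ there. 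Passing to the limit from either side and invoking sharp continuity of $\Phi$, $\nu$ and $p$ (Thm~\ref{thm:propGSF}.\ref{enu:GSF-cont}) then yields
$$\lim_{\substack{t\to a'\\t<a'}}\dot\nu(t)=\lim_{\substack{t\to a'\\a'<t}}\dot\nu(t)=\Phi(t_0,u_0,w_0)=\dot\eta(a').$$
The ``In particular'' claim is immediate: if $\beta\equiv 0$ on $[a',b]$ then $\dot\beta\equiv 0$, so the right-sided limit is $0$ and the displayed identity forces $\dot\eta(a')=0$.
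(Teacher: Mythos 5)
Your argument follows the paper's proof of this lemma almost step for step: the same application of the implicit function theorem (Thm.~\ref{thm:ImFT}) to $\Phi(t,l,v,q):=K_{\dot u}(t,l,v)-q$, the same use of its uniqueness clause to write $\dot\nu(t)=\phi\left(t,\nu(t),p(t)\right)$ where $p(t):=K_{\dot u}(t,\nu(t),\dot\nu(t))$, and the same passage to one-sided limits via sharp continuity of $\phi$. The only divergence is at the point you yourself single out as the main obstacle: the continuity of $p$ across $a'$. Where you propose to build it into the hypothesis (your ``strong reading'' of the piecewise Euler--Lagrange equation), the paper instead integrates \eqref{eq:ELEregularityLem} on $[a,t]$ to obtain
\[
p(t)=\int_a^t K_u(s,\nu(s),\dot\nu(s))\diff{s}+K_{\dot u}\left(a,\eta(a),\dot\eta(a)\right)\quad\forall t\in[a,a')\cup(a',b],
\]
and then observes that the integral term has equal one-sided limits at $a'$, since on each piece it is a GSF and its increment is bounded by $\max\left|K_u\right|\cdot|t-a'|$. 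Note, however, that asserting this identity with the single constant $K_{\dot u}(a,\eta(a),\dot\eta(a))$ also for $t>a'$ already encodes the no-jump condition on $p$ at $a'$: integrating on $(a',t]$ alone only pins the constant to $K_{\dot u}(a',\beta(a'),\dot\beta(a'))$. So the paper's mechanism implicitly uses the same Weierstrass--Erdmann-type corner condition that you make explicit; in the one place the lemma is applied (inside Thm.~\ref{thm:jacobi_no_min}), $\nu$ is a minimizer of $Q$, which is the classical source of that condition. In short: same route, same conclusion, and you have correctly isolated the one step that requires more than the literal hypotheses.
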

\begin{proof}
Set $\Phi(t,l,v,q):=K_{\dot{u}}(t,l,v)-q$ for all $t\in[a,b]$ and
all $l$, $v$, $q\in\rc^{d}$. For simplicity, set $(t_{0},l_{0},v_{0},q_{0}):=(a',\eta(a'),\dot{\eta}(a'),K_{\dot{u}}(a',\eta(a'),\dot{\eta}(a'))$.
Our assumption on the invertibility of $K_{\dot{u}\dot{u}}(a',\eta(a'),\dot{\eta}(a'))=\partial_{v}\Phi(t_{0},l_{0},v_{0},q_{0})$
makes it possible to apply the implicit function Thm.~\ref{thm:ImFT}
to conclude that there exists a neighbourhood $T\times L\times V\times Q$
of $(t_{0},l_{0},v_{0},q_{0})$ such that
\begin{equation}
\forall(t,l,q)\in T\times L\times Q\,\exists!v\in V:\ \Phi(t,l,v,q)=\Phi(t_{0},l_{0},v_{0},q_{0}).\label{eq:uniquenessImFT}
\end{equation}
But $\Phi(t_{0},l_{0},v_{0},q_{0})=K_{\dot{u}}(a',\eta(a'),\dot{\eta}(a'))-q_{0}=0$.
Moreover, the unique function $\phi$ defined by $\Phi(t,l,\phi(t,l,q),q)=0$
for all $(t,l,q)\in T\times L\times Q$ is a GSF $\phi\in\gsf(T\times L\times Q,V)$.
Now, for all $t\in[a,a')\cup(a',b]$, we have
\[
\Phi(t,\nu(t),\dot{\nu}(t),K_{\dot{u}}(t,\nu(t),\dot{\nu}(t)))=K_{\dot{u}}(t,\nu(t),\dot{\nu}(t))-K_{\dot{u}}(t,\nu(t),\dot{\nu}(t))=0.
\]
Therefore, uniqueness in \eqref{eq:uniquenessImFT} yields
\begin{equation}
\dot{\nu}(t)=\phi\left(t,\nu(t),K_{\dot{u}}(t,\nu(t),\dot{\nu}(t))\right)\quad\forall t\in[a,a')\cup(a',b].\label{eq:nuDotImplicit}
\end{equation}
We now integrate the Euler-Lagrange equation \eqref{eq:ELEregularityLem}
on $[a,t]$, obtaining
\[
K_{\dot{u}}(t,\nu(t),\dot{\nu}(t))=\int_{a}^{t}K_{u}(s,\nu(s),\dot{\nu}(s))\diff{s}+K_{\dot{u}}(a,\eta(a),\dot{\eta}(a))\quad\forall t\in[a,a')\cup(a',b].
\]
This entails that we can write
\begin{equation}
\dot{\nu}(t)=\phi\left(t,\nu(t),\int_{a}^{t}K_{u}(s,\nu(s),\dot{\nu}(s))\diff{s}+K_{\dot{u}}(a,\eta(a),\dot{\eta}(a))\right)\quad\forall t\in[a,a')\cup(a',b].\label{eq:nuDotInt}
\end{equation}
But the function $t\in[a,a')\cup(a',b]\mapsto\int_{a}^{t}K_{u}(s,\nu(s),\dot{\nu}(s))\diff{s}\in\rc^{d}$
has equal limits on the left and on the right of $a'$ because on
$[a,a')$ and on $(a',b]$ it is a GSF; in fact for $t<a'$ we have
\begin{multline*}
\left|\int_{a}^{t}K_{u}(s,\nu(s),\dot{\nu}(s))\diff{s}-\int_{a}^{a'}K_{u}(s,\nu(s),\dot{\nu}(s))\diff{s}\right|\le\\
\le\max_{t\in[a,a']}\left|K_{u}(s,\eta(s),\dot{\eta}(s))\right|\cdot|t-a'|,
\end{multline*}
and this goes to $0$ as $t\to a'$, $t<a'$. Analogously we can proceed for $t>a'$ using $\beta$.
Therefore
\[
\lim_{\substack{t\to a'\\
t<a'
}
}\int_{a}^{t}K_{u}(s,\nu(s),\dot{\nu}(s))\diff{s}=\lim_{\substack{t\to a'\\
t>a'
}
}\int_{a}^{t}K_{u}(s,\nu(s),\dot{\nu}(s))\diff{s}.
\]
Applying this equality in \eqref{eq:nuDotInt}, we get $\lim_{\substack{t\to a'\\
t<a'
}
}\dot{\nu}(t)=\dot{\eta}(a')=\lim_{\substack{t\to a'\\
a'<t
}
}\dot{\nu}(t)$ as claimed. Finally, if $\beta\equiv0|_{[a',b]}$, then $\lim_{\substack{t\to a'\\
a'<t
}
}\dot{\nu}(t)=0$.
\end{proof}
\noindent In the following definition and below, we use the complete
notation $\gsf_{0}(a,a')$ (see Def.~\ref{def:gsf_0}).
\begin{defn}
\label{def:conjPt}Let $a$, $a'$, $b\in\rc$, where $a<a'<b$. We
call $a'$ \emph{conjugate to} $a$ w.\,r.\,t.\ the variational problem \eqref{eq:def_I} if there exists a non identically
vanishing Jacobi field $\eta\in\gsf_{0}(a,a')$ along $u|_{[a,a']}$
such that $\eta(a)=0=\eta(a')$, where $\psi$ is given by \eqref{eq:aux_prob_psi}.
\end{defn}
In order to prove the important Jacobi's theorem in the present generalized
context, which shows that we cannot have minimizers if there are interior
points which are conjugate to $a$, we finally need the following 
\begin{lem}
\label{lem:jacobi_zero}Let $u\in\gsf([a,b],\rc^{d})$ and let $a'\in(a,b)$.
Let $\eta\in\gsf_{0}(a,a')$ be a Jacobi field along $u|_{[a,a']}$,
with $\eta(a)=0=\eta(a')$. Then
\[
\int_{a}^{a'}\psi(t,\eta,\dot{\eta})\,\dd t=0.
\]
\end{lem}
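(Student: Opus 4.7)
The plan is to show that the integrand $\psi(t,\eta,\dot\eta)$ is an exact derivative of a product that vanishes at the boundary, using the Jacobi equation satisfied by $\eta$. This is essentially the classical integration-by-parts calculation, and in this generalized setting it goes through because all the needed tools — the product rule (Thm.~\ref{thm:rulesDer}), the fundamental theorem of calculus (Thm.~\ref{thm:intRules}.\ref{enu:foundamental}), and the validity of integration by parts for GSF (Thm.~\ref{thm:intRules}.\ref{enu:intByParts}) — are available, and $\eta \in \gsf_0(a,a')$ is a GSF on $[a,a']$ so that all operations are legal.

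First I would introduce the auxiliary GSF
\[
A(t) := F_{\dot u\dot u}(t,u,\dot u)\dot\eta(t) + F_{u\dot u}(t,u,\dot u)\eta(t),
\]
which is a GSF on $[a,a']$ by closure of GSF under composition, products, and differentiation. The Jacobi equation \eqref{eq:jacobi} then reads exactly $\dot A = F_{u\dot u}\dot\eta + F_{uu}\eta$.

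Next I would compute $\frac{d}{dt}(A\eta)$ using the product rule (Thm.~\ref{thm:rulesDer}):
\[
\frac{d}{dt}(A\eta) = \dot A\,\eta + A\,\dot\eta = (F_{u\dot u}\dot\eta + F_{uu}\eta)\eta + (F_{\dot u\dot u}\dot\eta + F_{u\dot u}\eta)\dot\eta.
\]
Expanding and collecting terms, and recalling that $\psi(t,\eta,\dot\eta) = F_{uu}\eta\eta + 2F_{u\dot u}\eta\dot\eta + F_{\dot u\dot u}\dot\eta\dot\eta$, one sees directly that the right-hand side equals $\psi(t,\eta,\dot\eta)$. (This is just the ordinary algebraic identity, performed inside the ring $\rc$.) Hence $\psi(t,\eta,\dot\eta) = \frac{d}{dt}(A\eta)$ on $[a,a']$.

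Finally, by Thm.~\ref{thm:intRules}.\ref{enu:foundamental} the integral telescopes:
\[
\int_a^{a'}\psi(t,\eta,\dot\eta)\,\dd t = [A(t)\eta(t)]_a^{a'} = A(a')\eta(a') - A(a)\eta(a) = 0,
\]
using $\eta(a)=0=\eta(a')$. No step is really an obstacle: the only thing to notice is that the computation is purely algebraic once the Jacobi equation is used to replace $\dot A$, so the lack of total order and the presence of zero divisors in $\rc$ cause no difficulty. One simply has to make sure to use Thm.~\ref{thm:rulesDer} and Thm.~\ref{thm:intRules} rather than their classical counterparts.
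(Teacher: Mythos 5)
Your proof is correct and is essentially the paper's argument: the quantity $A\eta$ you differentiate is exactly the boundary term $\tfrac{1}{2}\psi_{\dot{\eta}}\eta$ that the paper produces by combining the Euler homogeneity identity $2\psi=\psi_{\eta}\eta+\psi_{\dot{\eta}}\dot{\eta}$ with integration by parts, so the two computations coincide term by term. The only (harmless) detail you elide is that for $d>1$ the two cross terms $F_{u\dot{u}}\dot{\eta}\,\eta$ and $F_{u\dot{u}}\eta\,\dot{\eta}$ combine into $2F_{u\dot{u}}\eta\dot{\eta}$ only after an index relabeling using the symmetry of second derivatives, a point the paper glosses over as well.
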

\begin{proof}
Since $\psi$ is $\rc$-homogeneous of second order in $(\eta,\dot{\eta})$,we
have
\[
2\psi(t,\eta,\dot{\eta})=\psi_{\eta}(t,\eta,\dot{\eta})\eta+\psi_{\dot{\eta}}(t,\eta,\dot{\eta})\dot{\eta}.
\]
Thus we calculate: 
\begin{align*}
2\int_{a}^{a'}\psi(t,\eta,\dot{\eta})\,\dd t & =\int_{a}^{a'}\eta\psi_{\eta}(t,\eta,\dot{\eta})+\dot{\eta}\psi_{\dot{\eta}}(t,\eta,\dot{\eta})\,\dd t\\
 & =\int_{a}^{a'}\eta\left(\psi_{\eta}(t,\eta,\dot{\eta})-\der{t}\psi_{\dot{\eta}}(t,\eta,\dot{\eta})\right)\,\dd t\:\:\text{by integration by parts}\\
 & =0\text{ since }\eta\text{ is a Jacobi field}.
\end{align*}
\end{proof}
After these preparations we can finally prove
\begin{thm}[Jacobi]
\label{thm:jacobi_no_min}Let $a$, $b\in\rc$ be such that $a<b$.
Suppose that $F\in\gsf([a,b]\times\rc^{d}\times\rc^{d},\rc)$ and
$u\in\gsf([a,b],\rc)$ are such that

\begin{enumerate}
\item $a'\in(a,b)$ is conjugate to $a$
\item \label{enu:invertibilityAssJacobi}$\det F_{\dot{u}\dot{u}}(t,u(t),\dot{u}(t))\in\rc$
is invertible for all $t\in[a,b]$.
\item \label{enu:log}For all $t\in[a,a']$
\begin{multline*}
\int_{a'}^{t}-F_{\dot{u}\dot{u}}^{-1}(s,u(s),\dot{u}(s))\cdot\left[\der{s}F_{u\dot{u}}(s,u(s),\dot{u}(s))-F_{uu}(s,u(s),\dot{u}(s))\right]\diff{s}=\\
=O_{\R}(\log\dd\rho)
\end{multline*}
\[
\int_{a'}^{t}-F_{\dot{u}\dot{u}}^{-1}(s,u(s),\dot{u}(s))\cdot\der{s}F_{\dot{u}\dot{u}}(s,u(s),\dot{u}(s))\diff{s}=O_{\R}(\log\dd\rho).
\]
\end{enumerate}
Then $u$ cannot be a local minimizer of $I$. Therefore, for any
$r\in\rc_{>0}$ there exists $v\in\gsf_{\text{bd}}(u(a),u(b))$ and
$m\in\N$ such that $\Vert v-u\Vert_{m}<r$ but $I(u)\not\le I(v)$.

\end{thm}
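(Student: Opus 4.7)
The plan is to argue by contradiction: assume $u$ is a local minimizer of $I$ and deduce that the conjugate Jacobi field must vanish identically, contrary to Def.~\ref{def:conjPt}. By hypothesis~(i) and Def.~\ref{def:conjPt}, fix a Jacobi field $\eta\in\gsf_{0}(a,a')$ along $u|_{[a,a']}$ with $\eta(a)=\eta(a')=0$ and $\eta\not\equiv 0$. Extend by zero past $a'$ to form the piecewise GSF
$$
\nu:=(\eta,0)\colon[a,a']\cup[a',b]\to\rc^{d},
$$
which is well defined thanks to $\eta(a')=0$ and satisfies $\nu(a)=0=\nu(b)$. One verifies that $\nu$ solves the Euler-Lagrange equation \eqref{eq:jacobi} of the accessory Lagrangian $\psi$ piecewise on $[a,a')\cup(a',b]$: on the first piece by the very definition of a Jacobi field; on the second piece trivially, since $\psi$ is homogeneous of degree two in $(\eta,\dot\eta)$, whence $\psi_{\eta}(t,0,0)=\psi_{\dot\eta}(t,0,0)=0$.

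Next, apply Lem.~\ref{lem:regularityResult} with $K=\psi$. Since $\psi_{\dot\eta_i\dot\eta_j}=2F_{\dot u_i\dot u_j}$, hypothesis~(ii) guarantees that $\det K_{\dot\eta\dot\eta}(a',\eta(a'),\dot\eta(a'))=2^{d}\det F_{\dot u\dot u}(a',u(a'),\dot u(a'))$ is invertible in $\rc$, so the invertibility requirement of the lemma is met. Since here $\beta\equiv 0$ on $[a',b]$, the final sentence of that lemma delivers $\dot\eta(a')=0$.

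To exploit uniqueness, rewrite the Jacobi equation on $[a,a']$ as a first-order linear $\rc$-system $Y'(t)=A(t)\,Y(t)$ for $Y:=(\eta,\dot\eta)\in\rc^{2d}$, with
$$
A(t)=\begin{pmatrix}0 & I\\ F_{\dot u\dot u}^{-1}\bigl[F_{uu}-\der{t}F_{u\dot u}\bigr] & -F_{\dot u\dot u}^{-1}\der{t}F_{\dot u\dot u}\end{pmatrix}.
$$
Hypothesis~(ii) makes $A$ an $\rc^{2d\times 2d}$-valued GSF on $[a,a']$. The off-diagonal identity block integrates to the bounded $(t-a')I$, while the two lower blocks integrated from $a'$ to $t$ are precisely the quantities bounded by $O_{\R}(\log\dd\rho)$ in hypothesis~(iii); combining these yields $\int_{a'}^{t}A(s)\,\dd s=O_{\R}(\log\dd\rho)$ on $[a,a']$. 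Thm.~\ref{thm:linearODE} applied with base point $t_{0}=a'$ and initial datum $Y(a')=(\eta(a'),\dot\eta(a'))=0$ therefore forces $Y\equiv 0$, in particular $\eta\equiv 0$, contradicting non-triviality. The ``moreover'' clause is obtained by unfolding the negation of Def.~\ref{def:minimizer}.(ii): failing to be a local minimizer means that, for every radius $r$, one can exhibit some $m$ and some $v\in B_{r}^{m}(u)\cap\gsf_{\text{bd}}(u(a),u(b))$ with $I(u)\not\le I(v)$.

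The principal obstacle I expect is the third step: one must verify carefully that $A(t)$ really defines a $\rc^{2d\times 2d}$-valued GSF on $[a,a']$ (so that Thm.~\ref{thm:linearODE} is applicable at all), and then turn the two scalar log-bounds of hypothesis~(iii) into the single operator-norm estimate $\bigl|\int_{a'}^{t}A(s)\,\dd s\bigr|\le -C\log\dd\rho$ that Thm.~\ref{thm:linearODE} requires. Both points hinge on moderate control of $F_{\dot u\dot u}^{-1}$ and on the composition rules for GSF together with the closure properties of $\gsf$ under differentiation.
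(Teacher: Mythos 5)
Your overall architecture (extend $\eta$ by zero to a piecewise GSF $\nu=(\eta,0)$, obtain $\dot{\eta}(a')=0$, then force $\eta\equiv0$ by rewriting the Jacobi equation as a first-order system and applying Thm.~\ref{thm:linearODE}) is the paper's, and your handling of the last step --- the assembly of $A$, the extraction of the logarithmic bound from hypothesis (iii), and the unfolding of the ``moreover'' clause --- matches the paper. The gap is in the middle: your argument never uses the assumption that $u$ is a local minimizer. You pass from ``$\nu$ satisfies the Euler--Lagrange equation of $\psi$ piecewise on $[a,a')\cup(a',b]$'' (which is indeed immediate from the definition of a Jacobi field and homogeneity of $\psi$) directly to ``$\dot{\eta}(a')=0$''. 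If that implication held for an arbitrary broken extremal, the theorem would be vacuous, since hypotheses (ii)--(iii) alone would forbid conjugate points. They do not: take $F(t,u,\dot{u})=\frac{1}{2}(\dot{u}^{2}-u^{2})$ and $u\equiv0$ on $[0,\pi+1]$, so that $\psi=\dot{\eta}^{2}-\eta^{2}$, $F_{\dot{u}\dot{u}}=1$ is invertible, the logarithmic bounds hold trivially, and $\eta=\sin$ is a non-vanishing Jacobi field with $\eta(0)=\eta(\pi)=0$ but $\dot{\eta}(\pi)=-1$. Your chain of inferences, none of whose steps refers to minimality, would apply verbatim here and output $\dot{\eta}(\pi)=0$, which is false. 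So the step ``piecewise extremal plus invertibility of $F_{\dot{u}\dot{u}}$ implies no corner'' cannot be taken for free: the proof of Lem.~\ref{lem:regularityResult} integrates the Euler--Lagrange equation over an interval containing $a'$, which presupposes continuity of the conjugate momentum $\psi_{\dot{\eta}}(\cdot,\nu,\dot{\nu})$ across $a'$, i.e.\ exactly the Weierstrass--Erdmann corner information that a bare broken extremal need not possess.

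This is precisely the block of the paper's proof that you skipped, and it is the only place where the contradiction hypothesis does any work. From Lem.~\ref{lem:jacobi_zero} one gets $Q(\nu)=0$; from Thm.~\ref{thm:necessCondsForMinimizer} applied to the local minimizer $u$ one gets $Q(\tilde{\nu})\ge0$ for all $\tilde{\nu}\in\gsf_{0}(a,b)$, hence $\nu$ is a global minimizer of the accessory functional $Q$; consequently $s\mapsto Q(\nu+s\phi)$ is a GSF with a minimum at $s=0$ for every $\phi\in\gsf_{0}(a,b)$ --- crucially including $\phi$ with $\phi(a')$ invertible --- and Lem.~\ref{lem:local_min_diff} gives $\delta Q(\nu;\phi)=0$. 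It is this stationarity with respect to variations that do not vanish at the corner which controls the jump of $\psi_{\dot{\eta}}$ at $a'$ and legitimizes the conclusion $\dot{\eta}(a')=0$ via the regularity lemma. You should restore this block before invoking Lem.~\ref{lem:regularityResult}; as written, the reductio hypothesis is idle and the argument proves too much.
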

\begin{proof}
By contradiction, assume that $u$ is a local minimizer, and let $\eta\in\gsf_{0}(a,a')$
be a Jacobi field along $u|_{[a,a']}$ such that the conditions from Def.~\ref{def:conjPt}
hold for $\eta$. We want to prove that $\eta\equiv0$. Define $\nu:=(\eta,0|_{[a',b]})$,
which is a piecewise GSF since $\eta(a')=0$. Since also $\eta(a)=0$,
Lem.~\ref{lem:jacobi_zero} and homogeneity of $\psi$ yield
\[
Q(\nu)=\int_{a}^{b}\psi(t,\nu(t),\dot{\nu}(t))\diff{t}=\int_{a}^{a'}\psi(t,\eta(t),\dot{\eta}(t))\diff{t}+\int_{a'}^{b}\psi(t,0,0)\diff{t}=0.
\]
Therefore, Thm.~\ref{thm:necessCondsForMinimizer} (necessary condition
for $u$ being a minimizer) gives $Q(\tilde{\nu})\ge0=Q(\nu)$ for
all $\tilde{\nu}\in\gsf_{0}(a,b)$. Thus, $\nu$ is a minimizer of
the functional $Q$. Since $\nu$ is only a piecewise GSF, we cannot
directly apply Thm.~\ref{thm:ELE} (Euler-Lagrange equations). But
for all $\phi\in\gsf_{0}(a,b)$ and all $s\in\rc$, we have
\begin{align}
Q(\nu+s\phi) & =\int_{a}^{b}\psi(t,\nu+s\phi,\dot{\nu}+s\dot{\phi})\diff{t}\nonumber \\
 & =\int_{a}^{a'}\psi(t,\eta+s\phi,\dot{\eta}+s\dot{\phi})\diff{t}+\int_{a'}^{b}\psi(t,s\phi,s\dot{\phi})\diff{t}.\label{eq:Q}
\end{align}
This shows that $s\in\rc\mapsto Q(\nu+s\phi)\in\rc$ is a GSF, and
hence $s=0$ is a minimum for this function. By Lem.~\ref{lem:local_min_diff}
and \eqref{eq:Q}, we get
\begin{align*}
\delta Q(\nu,\phi) & =0=\left.\der{s}Q(\nu+s\phi)\right|_{0}\\
 & =\int_{a}^{a'}\left(\psi_{\eta}(t,\eta,\dot{\eta})-\der{t}\psi_{\dot{\eta}}(t,\eta,\dot{\eta})\right)\phi\diff{t}+\int_{a'}^{b}\left(\phi\psi_{\eta}(t,0,0)+\dot{\phi}\psi_{\dot{\eta}}(t,0,0)\right)\diff{t}\\
 & =\int_{a}^{a'}\left(\psi_{\eta}(t,\eta,\dot{\eta})-\der{t}\psi_{\dot{\eta}}(t,\eta,\dot{\eta})\right)\phi\diff{t}.
\end{align*}
By the fundamental Lem.~\ref{lem:fund_lem_calc_var}, this implies
that $\eta$ satisfies the Euler-Lagrange equations for $\psi$ in
the interval $[a,a')$. Therefore, $\nu$ satisfies the same equations
in $[a,a')\cup(a',b]$. Moreover, $\psi_{\dot{\eta}\dot{\eta}}(a',\eta(a'),\dot{\eta}(a'))=F_{\dot{u}\dot{u}}(a',u(a'),\dot{u}(a'))$
is invertible by assumption \ref{enu:invertibilityAssJacobi}. Thus,
all the hypotheses of the regularity Lem\@.~\ref{lem:regularityResult}
hold, and we derive that $\dot{\eta}(a')=0$.

For all $t\in[a,b]$, we define 
\begin{align*}
\xi(t) & :=-F_{\dot{u}\dot{u}}^{-1}\cdot\left[\der{t}F_{u\dot{u}}(t,u,\dot{u})-F_{uu}(t,u,\dot{u})\right],\text{ and}\\
\vartheta(t) & :=-F_{\dot{u}\dot{u}}^{-1}\cdot\der{t}F_{\dot{u}\dot{u}}(t,u,\dot{u}),
\end{align*}
so that we can re-write the Jacobi equations \eqref{eq:jacobi} for
$\eta$ on $[a,a']$ as a system of first order ODE
\[
\begin{cases}
\dot{y}:=\begin{pmatrix}\dot{\eta}\\
\dot{z}
\end{pmatrix}=\begin{pmatrix}0 & 1\\
\xi & \vartheta
\end{pmatrix}\cdot\begin{pmatrix}\eta\\
z
\end{pmatrix}=:A\cdot y & \quad\forall t\in[a,a']\\
y(a')=\begin{pmatrix}\eta(a')\\
\dot{\eta}(a')
\end{pmatrix}=0
\end{cases}
\]
By assumptions \ref{enu:log}, we obtain $\int_{a'}^{t}A(t)=O_{\R}(\log\dd\rho)$
for all $t\in[a,a']$, and we can hence apply Thm.~\ref{thm:linearODE}
obtaining $y\equiv0$ and thus $\eta\equiv0$.
\end{proof}
\noindent Note that if one of the quantities in \eqref{enu:log} depends
even only polynomially on $\eps$, then we are forced to take e.g.~$\rho_{\eps}=\eps^{1/\eps}$
to fulfill this assumption. This underlines the importance of
the parameter $\rho$ making the entire theory dependent on the parameter $\rho$., in order to avoid
unnecessary constraints on the scope of the functionals we look upon.

\section{\label{sec:Noether's-theorem}Noether's theorem}

In this section, we state and prove Noether's theorem following the
lines of \cite{Ave86}. We first note that any $X\in\gsf\left(J\times X,Y\right)$,
where $J\subseteq\rc$, can also be considered as a family in GSF
which smoothly depends on the parameter $s\in J$. In this case, we
hence say that $(X_{s})_{s\in J}$ \emph{is a generalized smooth family
in} $\gsf(X,Y)$. In particular, we can reformulate in the language
of GSF the classical definition of \emph{one-parameter group of generalized
diffeomorphisms of $X$} as follows:
\begin{enumerate}
\item \label{enu:GSFam}$(X_{s})_{s\in{\rc}}$ is a generalized smooth family
in $\gsf(X,X)$,
\item For all $s\in\rc$, the map $X_{s}:X\ra X$ is invertible, and $X_{s}^{-1}\in\gsf(X,X)$,
\item \label{enu:Phi_0-Id}$X_{0}(x)=x$ for all $x\in X$,
\item $X_{s}\circ X_{t}=X_{s+t}$ for all $s$, $t\in\rc$.
\end{enumerate}
\noindent In our proofs, we will in fact only use properties \ref{enu:GSFam}
and \ref{enu:Phi_0-Id}.

The proof of Noether's theorem is classically anticipated by the following
time-independent version, which the general case is subsequently
reduced to.
\begin{thm}
\label{thm:aux_noether}Let $K\in\gsf\left(L\times V,\rc\right)$,
where $L$, $V\subseteq\rc^{n}$ are sharply open sets. Let $w\in\gsf((a,b),L)$
be a solution of the Euler-Lagrange equation corresponding to $K$,
i.e.~for all $t\in(a,b)$
\begin{equation}
\dot{w}(t)\in V\ ,\ K_{u}(w(t),\dot{w}(t))=\der{t}K_{\dot{u}}(w(t),\dot{w}(t)).\label{eq:ELE-K}
\end{equation}
Suppose that $0$ is a sharply interior point of $J\subseteq\rc$
and $(X_{s})_{s\in J}$ is a generalized smooth family in $\gsf(L,L)$,
such that for all $t\in(a,b)$

\begin{enumerate}
\item $\pdd{t}X_{s}(w(t))\in V$,
\item \label{enu:X_0-Id}$X_{0}(w(t))=w(t)$,
\item \label{enu:I-notDepOn-s}$K$ is invariant under $(X_{s})_{s\in J}$
along $w$, i.e.
\begin{equation}
K(w(t),\dot{w}(t))=K\left(X_{s}(w(t)),\pdd{t}X_{s}(w(t))\right)\quad\forall s\in J.\label{eq:invK}
\end{equation}
\end{enumerate}
Then, the quantity
\[
K_{\dot{u}^{j}}(w(t),\dot{w}(t))\left.\pdd{s}\right|_{s=0}X_{s}^{j}(w(t))\in\rc
\]
is constant in $t\in(a,b)$.
\end{thm}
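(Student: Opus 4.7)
The plan is to mimic the classical derivation of Noether's identity by differentiating the invariance relation \eqref{eq:invK} in $s$ at $s=0$ and recognising the result as a total $t$-derivative of the conserved quantity, thanks to the Euler-Lagrange equation \eqref{eq:ELE-K}. Concretely, viewing the family $(X_s)_{s\in J}$ as a single GSF $X\in\gsfo(J\times L,L)$ (which is the content of the phrase ``generalized smooth family''), I introduce the auxiliary GSF $Y(s,t):=X(s,w(t))$ and $\Phi(s,t):=K(Y(s,t),\partial_t Y(s,t))$, both of which are well defined and smooth in the sense of Thm.~\ref{thm:propGSF} because $\gsfo$ is closed under composition and the chain rule from Thm.~\ref{thm:rulesDer} applies. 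By hypothesis \ref{enu:I-notDepOn-s}, $\Phi(s,t)=K(w(t),\dot w(t))$, and in particular $\partial_s\Phi\equiv 0$ on $J\times(a,b)$.

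Next I would compute $\partial_s|_{0}\Phi$ using the chain rule of Thm.~\ref{thm:rulesDer}:
\[
\partial_s|_{0}\Phi(s,t)=K_{u^j}(w,\dot w)\,\partial_s|_{0}Y^j(s,t)+K_{\dot u^j}(w,\dot w)\,\partial_s|_{0}\partial_t Y^j(s,t),
\]
where I have used \ref{enu:X_0-Id} to evaluate $Y(0,t)=w(t)$ and $\partial_t Y(0,t)=\dot w(t)$. The key exchange of partials $\partial_s\partial_t Y^j=\partial_t\partial_s Y^j$ is a Schwarz-type identity for GSF, which holds since $Y$ is defined by a net of ordinary smooth functions $Y_\eps$ for which Schwarz is standard, and derivatives of GSF are computed $\eps$-wise via Thm.~\ref{thm:FR-forGSF}.\ref{enu:defDer}. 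Substituting the Euler-Lagrange identity $K_{u^j}(w,\dot w)=\frac{\dd}{\dd t}K_{\dot u^j}(w,\dot w)$ from \eqref{eq:ELE-K} and then recognising the Leibniz rule in reverse yields
\[
0=\partial_s|_{0}\Phi(s,t)=\der{t}\Bigl[K_{\dot u^j}(w(t),\dot w(t))\,\partial_s|_{0}X_s^{j}(w(t))\Bigr]
\]
for every $t\in(a,b)$.

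Finally I would invoke Thm.~\ref{thm:intRules}.\ref{enu:foundamental}: for any $t_0$, $t\in(a,b)$ with $t_0<t$ the segment $[t_0,t]$ lies in $(a,b)$ (since $s\ge t_0>a$ and $s\le t<b$ give $s>a$ and $s<b$ in the invertible sense), so the conserved quantity evaluated at $t$ minus its value at $t_0$ equals the integral of its $t$-derivative, which vanishes. By Lem.~\ref{lem:componentWisePos}/sharp continuity, this suffices to conclude constancy of the quantity on $(a,b)$.

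The main obstacle is ensuring that each of the symbolic manipulations (chain rule, Schwarz, Leibniz, fundamental theorem of calculus) is legitimate in the GSF framework rather than merely $\eps$-wise. Chain rule and Leibniz are immediate from Thm.~\ref{thm:rulesDer}; the fundamental theorem from Thm.~\ref{thm:intRules}.\ref{enu:foundamental}; the only nontrivial point is the commutation of partials $\partial_s|_{0}\partial_t=\partial_t\partial_s|_{0}$, which I would justify by unwrapping to the defining nets and applying classical Schwarz together with Thm.~\ref{thm:FR-forGSF}.\ref{enu:defDer} to ensure the resulting net indeed defines the relevant GSF derivative.
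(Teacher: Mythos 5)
Your proposal is correct and follows essentially the same route as the paper's proof: differentiate the invariance relation at $s=0$, apply the chain rule and the commutation $\partial_s\partial_t=\partial_t\partial_s$ (which the paper uses implicitly), substitute the Euler--Lagrange equation, and recognise the result as the total $t$-derivative of the claimed conserved quantity. The only cosmetic difference is the last step, where the paper invokes the uniqueness part of Thm.~\ref{thm:existenceUniquenessPrimitives} while you use Thm.~\ref{thm:intRules}.\ref{enu:foundamental} plus sharp continuity; both are equivalent ways of passing from a vanishing derivative to constancy.
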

\begin{proof}
We first note that both sides of \eqref{eq:invK} are in $\gsf((a,b),\rc)$.
Let $\tau\in(a,b)$ be arbitrary but fixed. Since $s=0\in J$ is a
sharply interior point, we can consider $\left.\frac{\dd}{\dd s}\right|_{s=0}$.
We obtain
\begin{align*}
0 & \stackrel{\eqref{eq:invK}}{=}\left.\pdd{s}\right|_{s=0}K\left(X_{s}(w),\pdd{t}X_{s}(w)\right)\\
 & \stackrel{\ref{enu:X_0-Id}}{=}\int_{a}^{\tau}K_{u}(w,\dot{w})\left.\pdd{s}\right|_{s=0}X_{s}(w)+K_{\dot{u}}(w,\dot{w})\pdd{t}\left.\pdd{s}\right|_{s=0}X_{s}(w)\,\dd t.
\end{align*}
Since the Euler-Lagrange equations \eqref{eq:ELE-K} for $K$ are
given by $K_{u}(w,\dot{w})=\der{t}K_{\dot{u}}(w,\dot{w})$, we have
\begin{align*}
0 & =\der{t}\left(K_{\dot{u}}(w,\dot{w})\right)\left.\pdd{s}\right|_{s=0}X_{s}(w)+K_{\dot{u}}(w,\dot{w})\pdd{t}\left.\pdd{s}\right|_{s=0}X_{s}(w)\\
 & =\der{t}\left(K_{\dot{u}}(w,\dot{w})\left.\pdd{s}\right|_{s=0}X_{s}(w)\right).
\end{align*}
Which is our conclusion by the uniqueness - part of Thm\@.~\ref{thm:existenceUniquenessPrimitives}.
\end{proof}
We are now able to prove Noether's theorem. For the convenience of the reader,
in its statement and proof we use the variables $t$, $T$, $l$,
$L$, $v$, $V$ so as to recall \emph{tempus}, \emph{locus}, \emph{velocitas}
resp.
\begin{thm}[E.~Noether]
\label{thm:noether}~\\
Let $a$, $b\in\rc^{d}$, with $a<b$, and $F\in\gsf\left([a,b]\times\rc^{d}\times\rc^{d},\rc\right)$.
Let $u\in\gsf([a,b],\rc^{d})$ be a solution of the Euler-Lagrange
equation \eqref{eq:ELE} corresponding to $F$. Suppose that $0$
is a sharply interior point of $J\subseteq\rc$ and $(X_{s})_{s\in J}$
is a generalized smooth family in $\gsf((a,b)\times\rc^{d},(a,b)\times\rc^{d})$.
We denote by $T_{s}(t,l):=X_{s}^{1}(t,l)\in(a,b)$ and $L_{s}(t,l):=X_{s}^{2}(t,l)\in\rc^{d}$
for all $(t,l)\in(a,b)\times\rc^{d}$, the two projections of $X_{s}$
on $(a,b)$ and $\rc^{d}$ resp. We assume that for all $t\in(a,b)$

\begin{enumerate}
\item \label{enu:pddT_sInv}$\pdd{t}T_{s}(t,u(t))\in\rc$ is invertible,
\item \label{enu:T_0-L_0}$T_{0}(t,u(t))=t$ and $L_{0}(t,u(t))=u(t)$,
\item \label{enu:FInv}$F(t,u(t),\dot{u}(t))=F\left[T_{s}(t,u),L_{s}(t,u),\frac{\pdd{t}L_{s}(t,u)}{\pdd{t}T_{s}(t,u)}\right]\cdot\pdd{t}T_{s}(t,u)$
for all $s\in J$.
\end{enumerate}
Then, the quantity
\begin{multline}
F_{\dot{u}^{j}}(t,u(t),\dot{u}(t))\left.\pdd{s}\right|_{s=0}L_{s}^{j}(t,u(t))+\\
+\left[F(t,u(t),\dot{u}(t))-F_{\dot{u}^{k}}(t,u(t),\dot{u}(t))\dot{u}^{k}(t)\right]\left.\pdd{s}\right|_{s=0}T_{s}(t,u(t))\label{eq:intMotNoether}
\end{multline}
is constant in $t\in[a,b]$.
\end{thm}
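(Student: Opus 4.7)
The plan is to reduce Thm.~\ref{thm:noether} to the time-independent Noether theorem (Thm.~\ref{thm:aux_noether}) by the classical device of enlarging the configuration space so that time becomes an additional coordinate. Set $L := (a,b) \times \rc^d$ and $V := \{(v_0, v) \in \rc \times \rc^d \mid v_0 \text{ invertible}\}$: both are sharply open in $\rc^{d+1}$, the latter by Lem.~\ref{lem:mayer}. Define the extended Lagrangian $K \in \gsf(L \times V, \rc)$ by
\[
 K(t, l, v_0, v) := F\bigl(t,\, l,\, v/v_0\bigr) \cdot v_0,
\]
and let $w(\tau) := (\tau, u(\tau))$, so that $w \in \gsf((a,b), L)$, $\dot w(\tau) = (1, \dot u(\tau)) \in V$, and $K(w(\tau), \dot w(\tau)) = F(\tau, u(\tau), \dot u(\tau))$.

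A short calculation using Thm.~\ref{thm:rulesDer} gives
\[
 K_{v_0} = F - F_{\dot u^k}\,v^k/v_0,\quad K_{v^j} = F_{\dot u^j},\quad K_t = F_t\, v_0,\quad K_{l^j} = F_{u^j}\, v_0.
\]
Evaluated along $(w, \dot w)$ (where $v_0 = 1$), the Euler-Lagrange system \eqref{eq:ELE-K} for $K$ splits into (a) its $l$-component $\frac{d}{d\tau} F_{\dot u^j} = F_{u^j}$, which is exactly the Euler-Lagrange equation \eqref{eq:ELE} for $F$ and holds by assumption; and (b) its $t$-component $\frac{d}{d\tau}(F - F_{\dot u^k}\dot u^k) = F_t$, which follows automatically from (a) by the chain rule applied to $\frac{d}{d\tau}F(\tau, u, \dot u)$. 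Hence $w$ solves the Euler-Lagrange equations for $K$.

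Next I translate hypothesis~(iii) into the invariance condition \eqref{eq:invK}. By hypothesis~(i), $\pd_t T_s(t, u(t))$ is invertible in $\rc$, so the curve $\tilde w_s(t) := X_s(w(t)) = (T_s(t, u(t)), L_s(t, u(t)))$ satisfies $\pd_t \tilde w_s(t) \in V$, and
\[
 K\bigl(\tilde w_s(t), \pd_t \tilde w_s(t)\bigr) = F\bigl(T_s, L_s, \pd_t L_s / \pd_t T_s\bigr) \cdot \pd_t T_s = F(t, u, \dot u) = K(w, \dot w),
\]
the middle equality being precisely hypothesis~(iii). Hypothesis~(ii) gives $X_0(w(t)) = w(t)$. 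Thus all hypotheses of Thm.~\ref{thm:aux_noether} hold for $K$ and the family $(X_s)_{s \in J}$ viewed as acting on the extended configuration space, and that theorem yields that
\[
 K_{v_0}(w, \dot w)\cdot\pd_s|_0 T_s(t, u(t)) + K_{v^j}(w, \dot w)\cdot\pd_s|_0 L_s^j(t, u(t))
\]
is constant in $t$. Substituting the computed expressions for $K_{v_0}$ and $K_{v^j}$ at $(w, \dot w)$ recovers precisely \eqref{eq:intMotNoether}.

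The main obstacle is purely bookkeeping: one must carefully distinguish the symbolic partial derivatives of $K$ in the slots $(t, l, v_0, v)$ from the total derivative $\frac{d}{dt}$ along $w$, and the partial $\pd_1 T_s$ in the first argument from the total derivative $\pd_t T_s(t, u(t))$ appearing in the statement. Closure of GSF under composition (Thm.~\ref{thm:propGSF}) and the chain rule (Thm.~\ref{thm:rulesDer}) make these manipulations routine; the only nontrivial input is that $(\tilde w_s(t), \pd_t \tilde w_s(t))$ remains inside $V$ on all of $(a,b)$, which is guaranteed by hypothesis~(i) together with Lem.~\ref{lem:mayer}.
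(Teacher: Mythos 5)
Your proposal is correct and follows essentially the same route as the paper's own proof: the same extended Lagrangian $K(t,l;v_0,v)=F(t,l,v/v_0)\cdot v_0$ on $L\times V$ with $V=\rc^{*}\times\rc^{d}$, the same curve $w(t)=(t,u(t))$, the same computation of the partial derivatives of $K$, and the same reduction to Thm.~\ref{thm:aux_noether}. Your observation that the $t$-component of the Euler--Lagrange system for $K$ reduces to $\bigl[\tfrac{d}{dt}F_{\dot u^k}-F_{u^k}\bigr]\dot u^k$ and hence follows from the equations for $F$ is exactly the identity the paper verifies.
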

\begin{proof}
Since \eqref{eq:intMotNoether} is a GSF in $t\in[a,b]$, by sharp
continuity it suffices to prove the claim for all $t\in(a,b)$. Set
$L:=(a,b)\times\rc^{d}$, $V:=\rc^{*}\times\rc^{d}$ (we recall that
$\rc^{*}$ denotes the set of all invertible generalized numbers in
$\rc$). Define $K\in\gsf(L\times V,\rc)$ by
\begin{equation}
K(t,l;p,v):=F\left(t,l,\frac{v}{p}\right)\cdot p\quad\forall(t,l)\in L\,\forall(p,v)\in V,\label{eq:defK}
\end{equation}
and $w\in\gsf((a,b),L)$ by $w(t):=(t,u(t))$ for all $t\in(a,b)$.
We note that $L$, $V\subseteq\rc^{d+1}$ are sharply open subsets
and that $\dot{w}(t)=(1,\dot{u}(t))\in V$. The notations for partial
derivatives used in the present work result from the symbolic writing
$K(u^{1},\ldots,u^{d+1};\dot{u}^{1},\ldots,\dot{u}^{d+1})$, so that
the variables used in \eqref{eq:defK} yield
\begin{equation}
K_{u^{j}}(t,l;p,v)=\begin{cases}
K_{t}(t,l;p,v)=F_{t}\left(t,l,\frac{v}{p}\right)\cdot p & \text{ if }j=1\\
K_{l^{j}}(t,l;p,v)=F_{u^{j-1}}\left(t,l,\frac{v}{p}\right)\cdot p & \text{ if }j=2,\ldots,d+1,
\end{cases}\label{eq:K_u^j}
\end{equation}
and
\begin{equation}
K_{\dot{u}^{j}}(t,l;p,v)=\begin{cases}
K_{p}(t,l;p,v)=F\left(t,l,\frac{v}{p}\right)-F_{\dot{u}^{k}}\left(t,l,\frac{v}{p}\right)\frac{v^{k}}{p} & \text{ if }j=1\\
K_{v^{j}}(t,l;p,v)=F_{\dot{u}^{j-1}}\left(t,l,\frac{v}{p}\right) & \text{ if }j=2,\ldots,d+1.
\end{cases}\label{eq:K_udot^j}
\end{equation}
From these, for all $t\in(a,b)$ and all $j=2,\ldots,d+1$, it follows that
\begin{align*}
K_{u^{1}}(w,\dot{w})-\der{t}K_{\dot{u}^{1}}(w,\dot{w}) & =\left[\der{t}F_{\dot{u}^{k}}(t,u,\dot{u})-F_{u^{k}}(t,u,\dot{u})\right]\cdot\dot{u}^{k}\\
K_{u^{j}}(w,\dot{w})-\der{t}K_{\dot{u}^{j}}(w,\dot{w}) & =F_{u^{j-1}}(t,u,\dot{u})-\der{t}F_{\dot{u}^{j-1}}(t,u,\dot{u}).
\end{align*}
Therefore, since $u$ satisfies the Euler-Lagrange equations for $F$,
this entails that $w$ is a solution of the analogous equations for $K$
in $(a,b)$. Now, \ref{enu:pddT_sInv} gives
\[
\pdd{t}X_{s}(w(t))=\left(\pdd{t}T_{s}(t,u(t)),\pdd{t}L_{s}(t,u(t))\right)\in\rc^{*}\times\rc^{d}=V.
\]
Moreover, \ref{enu:T_0-L_0} gives $X_{0}(w(t))=\left(T_{0}(t,u(t)),L_{0}(t,u(t))\right)=w(t)$.
Finally
\begin{align*}
K(w,\dot{w}) & =F(t,u,\dot{u})\\
K\left(X_{s}(w),\pdd{t}X_{s}(w)\right) & =F\left[T_{s}(t,u),L_{s}(t,u),\frac{\pdd{t}L_{s}(t,u)}{\pdd{t}T_{s}(t,u)}\right]\cdot\pdd{t}T_{s}(t,u).
\end{align*}
We can hence apply Thm.~\ref{thm:aux_noether}, and from \eqref{eq:K_u^j},
\eqref{eq:K_udot^j} we get that
\begin{multline*}
K_{\dot{u}^{j}}(w,\dot{w})\left.\pdd{s}\right|_{0}X_{s}^{j}(w)=F_{\dot{u}^{j}}(t,u(t),\dot{u}(t))\left.\pdd{s}\right|_{s=0}L_{s}^{j}(t,u(t))+\\
+\left[F(t,u(t),\dot{u}(t))-F_{\dot{u}^{k}}(t,u(t),\dot{u}(t))\dot{u}^{k}(t)\right]\left.\pdd{s}\right|_{s=0}T_{s}(t,u(t))
\end{multline*}
is constant in $t\in(a,b)$.
\end{proof}

\section{\label{sec:Application}Application to $\mathcal{C}^{1,1}$ Riemannian
metric}

In the following, we apply what we did so far to the problem of length-minimizers
in $(\R^{d},g)$, where $g\in\mathcal{C}^{1,1}$ is a Riemannian metric.
Furthermore, we assume that $(\R^{d},g)$ is geodesically complete.
Note that the seeming restriction of considering only $\R^{d}$
as our manifold weighs not so heavy. Indeed, the question of length
minimizers can be considered to be a local one, since it is not guaranteed
that global minimizers exist at all, whereas local minimizers always
exist. Additionally, note that it was shown that it suffices to consider
smooth manifolds (cf.\ \cite[Thm. 2.9]{Hi:76}) instead of $\mathcal{C}^{k}$
manifolds with $1\le k<+\infty$. Therefore, there is no need to consider
non-smooth charts.

In this section, we fix any embedding $(\iota_{\Omega}^{b})_{\Omega}$,
where $b\in\rc$ satisfies $b\ge\dd\rho^{-a}$ for some $a\in\R_{>0}$,
and where $\Omega\subseteq\R^{d}$ is an arbitrary open set, see Thm.~\ref{thm:embeddingD'}.
Actually, the embedding also depends on the dimension $d\in\N_{>0}$,
but to avoid cumbersome notations, we denote embeddings always with
the symbol $\iota$.

By \cite[Rem. 2.6.2]{KSSV:13}, it follows that we can always find a
net of smooth functions $(g_{ij}^{\eps})$ such that setting $\tilde{g}:=\iota(g)=:[g_{ij}^{\eps}(-)]\in\gsf(\rc^{d}\times\rc^{d},\rc)$,
then for all $\eps$, $g_{ij}^{\eps}$ is a Riemannian metric. By Thm.~\ref{thm:embeddingD'}.\ref{enu:eps-D'}
it follows that $g_{ij}^{\eps}\to g_{ij}$ in $\mathcal{C}^{0}$ norm.
Let $\Gamma_{ij}^{\eps}$ be the Christoffel symbols of $g^{\eps}$,
and set $\tilde{\Gamma}_{ij}:=[\Gamma_{ij}^{\eps}(-)]\in\gsf(\rc^{d},\rc^{d})$.
A curve $\gamma\in\gsf(J,\rc^{d})$, $J$ being a sharply open subset
of $\rc$, is said to be a \emph{geodesic} of $(\rc^{d},\tilde{g})$
if
\begin{equation}
\ddot{\gamma}(t)+\tilde{\Gamma}_{ij}(\gamma(t))\dot{\gamma}^{i}(t)\dot{\gamma}^{j}(t)=0\quad\forall t\in J.\label{eq:geoEq}
\end{equation}

\begin{defn}
We say that $(\rc^{d},\tilde{g})$ is \emph{geodesically complete}
if every solution of the geodesic equation belongs to $\gsf(\rc,\rc^{d})$,
i.e.~if for all $p\in\rc^{d}$ and all $v\in\rc^{d}$ there exists
a geodesic $\gamma\in\gsf(\rc,\rc^{d})$ of $(\rc^{d},\tilde{g})$
such that $\gamma(0)=p$ and $\dot{\gamma}(0)=v$.
\end{defn}
\noindent This definition includes also the possibility that the point
$p$ or the vector $v$ could be infinite. By Thm.~\ref{thm:inclusionCGF},
it follows that if we consider only finite $p$ and $v$, then any
geodecis $\gamma\in\gsf(\rc,\rc^{d})$ induces a Colombeau generalized
function $\gamma|_{\csp{\R}}\in\gs(\R)^{d}$. Therefore, the space
$(\csp{\R^{d}},\tilde{g}|_{\csp{\R^{d}}\times\csp{\R^{d}}})$ is geodesically
complete in the sense of \cite{Sa-St15}. We recall that $\csp{\Omega}$
is the set of compactly supported (i.e.~finite) generalized points
in $\Omega$ (see Thm.~\ref{thm:embeddingD'}).

The definition of length of a (non singular) curve needs the following
\begin{rem}
We set
\[
\sqrt{-}=(-)^{1/2}:x=[x_{\eps}]\in\rc_{>0}\mapsto[\sqrt{x_{\eps}}]\in\rc_{>0}.
\]
Lem.~\ref{lem:mayer} readily implies that $\sqrt{-}\in\gsf(\rc_{>0},\rc_{>0})$.
Therefore, the square root is defined on every strictly positive infinitesimal,
but it cannot be extended to $\rc_{\ge0}$.
\end{rem}
\begin{defn}
~
\begin{enumerate}
\item Let $\tilde{p}$, $\tilde{q}\in\rc^{d}$, then
\[
\gsf_{>0}(\tilde{p},\tilde{q}):=\left\{ \lambda\in\gsf([0,1],\rc^{d})\mid\lambda(0)=\tilde{p},\lambda(1)=\tilde{q},|\dot{\lambda}(t)|>0\ \forall t\in[0,1]\right\} .
\]
Moreover, for $\lambda\in\gsf_{>0}(\tilde{p},\tilde{q})$, we set
\[
L_{\tilde{g}}(\lambda):=\int_{0}^{1}\left(\tilde{g}_{ij}(\alpha(t))\dot{\alpha}^{i}(t)\dot{\alpha}^{j}(t)\right)^{1/2}\diff{t}\in\rc.
\]
\item Let $x=[x_{\eps}]\in\rc^{n}$, then we set $\text{st}(x):=\lim_{\eps\to0}x_{\eps}\in\R^{d}$,
if this limit exists. Note that $x\approx\text{st}(x)$ in this case.
\end{enumerate}
\end{defn}
Note that \eqref{eq:geoEq} are the usual geodesic equations for the generalized
metric $\tilde{g}$, whose derivation is completely analogous
to that in the smooth case. Thus they are the Euler-Lagrange equations of $L_{\tilde{g}}$.

We are interested only in global minimizers of the functional $L_{\tilde{g}}$,
i.e.~curves $\lambda_{0}\in X(\tilde{p},\tilde{q})$ such that $L_{\tilde{g}}(\lambda_{0})\leq L_{\tilde{g}}(\lambda)$
for all $\lambda\in\gsf_{>0}(\tilde{p},\tilde{q})$. 
\begin{lem}
\label{lem:conv_length}Let $p,q\in\R^d$ and $\tilde{p}$, $\tilde{q}\in\rc^{d}$ such that $ \text{st}(\tilde p)=p$ and $ \text{st}(\tilde q)=q$. Let
$\lambda=[\lambda_{\varepsilon}(-)]\in\gsf_{>0}(\tilde{p},\tilde{q})$
be such that there exists
\[
\bar{\lambda}\in\mathcal{C}_{>0}^{1}(p,q):=\left\{ w\in\mathcal{C}^{1}([0,1]_{\R},\R^{d})\mid w(0)=p,w(1)=q,|\dot{w}(t)|>0\ \forall t\in[0,1]_{\R}\right\} 
\]
such that $\lambda_{\varepsilon}\to\bar{\lambda}$ in $\mathcal{C}^{1}$
as $\varepsilon\to0$, then\textup{
\[
\text{st}(L_{\tilde{g}}(\lambda))=L_{g}(\bar{\lambda}).
\]
}
\end{lem}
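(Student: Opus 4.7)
My plan is to unfold $L_{\tilde g}(\lambda)$ in terms of its defining $\eps$-net and reduce the statement to a classical uniform-convergence argument. By Thm.~\ref{thm:existenceUniquenessPrimitives} applied to the GSF $t\mapsto(\tilde g_{ij}(\lambda(t))\dot\lambda^i(t)\dot\lambda^j(t))^{1/2}$, a representative for $L_{\tilde g}(\lambda)$ is given by the net of ordinary integrals $I_\eps:=\int_0^1 (g^\eps_{ij}(\lambda_\eps(t))\dot\lambda_\eps^i(t)\dot\lambda_\eps^j(t))^{1/2}\,\dd t$. Therefore it is enough to show $I_\eps\to L_g(\bar\lambda)$ in $\R$ as $\eps\to 0^+$, because this automatically yields $\text{st}(L_{\tilde g}(\lambda))=L_g(\bar\lambda)$. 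Since $\bar\lambda\in\mathcal{C}^1([0,1]_\R,\R^d)$ and $\lambda_\eps\to\bar\lambda$ in $\mathcal{C}^1$, the images $\lambda_\eps([0,1]_\R)$ are eventually contained in a fixed compact neighborhood $K\Subset\R^d$ of $\bar\lambda([0,1]_\R)$, so I may restrict every relevant estimate to $K$.

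The next step is to secure a uniform positive lower bound for the integrand. Since $\bar\lambda\in\mathcal{C}^1_{>0}(p,q)$, continuity and compactness give $c_0:=\min_{t\in[0,1]_\R}|\dot{\bar\lambda}(t)|>0$, and uniform convergence $\dot\lambda_\eps\to\dot{\bar\lambda}$ yields $|\dot\lambda_\eps(t)|\ge c_0/2$ for all $t\in[0,1]_\R$ and $\eps$ small. Because $g$ is a continuous Riemannian metric on $K$, its minimal eigenvalue is bounded below by some $\mu>0$ there; together with $g^\eps_{ij}\to g_{ij}$ in $\mathcal{C}^0$ norm (guaranteed by Thm.~\ref{thm:embeddingD'}.\ref{enu:eps-D'} and the continuity of $g$), the quadratic form $g^\eps_{ij}(x)\xi^i\xi^j$ is, for $x\in K$ and $\eps$ small, bounded below by $(\mu/2)|\xi|^2$. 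Combining these, the integrand $\varphi_\eps(t):=g^\eps_{ij}(\lambda_\eps(t))\dot\lambda_\eps^i(t)\dot\lambda_\eps^j(t)$ satisfies $\varphi_\eps(t)\ge (\mu/2)(c_0/2)^2=:\delta>0$ uniformly in $t\in[0,1]_\R$ and for $\eps$ small, so the square root is smooth at every value it is applied to.

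Finally I would combine the uniform convergences: $\lambda_\eps\to\bar\lambda$ and $\dot\lambda_\eps\to\dot{\bar\lambda}$ in sup-norm on $[0,1]_\R$, and $g^\eps_{ij}\to g_{ij}$ uniformly on $K$, imply that $\varphi_\eps\to g_{ij}(\bar\lambda)\dot{\bar\lambda}^i\dot{\bar\lambda}^j=:\varphi$ uniformly on $[0,1]_\R$. Since $\varphi_\eps,\varphi\ge\delta>0$ and $\sqrt{\cdot}$ is Lipschitz on $[\delta,+\infty)$, $\sqrt{\varphi_\eps}\to\sqrt{\varphi}$ uniformly, and hence $I_\eps\to\int_0^1\sqrt{\varphi(t)}\,\dd t=L_g(\bar\lambda)$, which is the claim.

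The main technical point, and the one requiring genuine care, is the second paragraph: one must synthesize the condition $|\dot\lambda(t)|>0$ in $\rcrho$ with the \emph{classical} non-vanishing of $\dot{\bar\lambda}$ to produce a single real lower bound $\delta>0$ that holds uniformly in $t$ and in $\eps$ for $\eps$ small. Everything else is bookkeeping: the $\eps$-wise description of $L_{\tilde g}(\lambda)$ is immediate from Thm.~\ref{thm:existenceUniquenessPrimitives}, and the final passage to the limit is standard uniform-convergence calculus.
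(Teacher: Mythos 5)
Your proof is correct and follows essentially the same route as the paper's: reduce to $\eps$-wise convergence of the classical integrals and control the square root through a uniform positive lower bound on the radicand (the paper packages this as the identity $\sqrt{A}-\sqrt{B}=(A-B)/(\sqrt{A}+\sqrt{B})$ with the denominator bounded below, which is your Lipschitz estimate in disguise). You are in fact more explicit than the paper, whose ``there exists $C\in\R_{>0}$'' silently relies on exactly the compactness and positive-definiteness argument you spell out in your second paragraph.
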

\begin{proof}
We calculate:
\begin{multline*}
\left|\int_{0}^{1}\left(g_{ij}^{\eps}(\lambda_{\varepsilon})\dot{\lambda}_{\varepsilon}^{i}\dot{\lambda}_{\varepsilon}^{j}\right)^{1/2}-\left(g_{ij}(\bar{\lambda})\dot{\bar{\lambda}}^{i}\dot{\bar{\lambda}}^{j}\right)^{1/2}\diff{t}\right|=\\
=\left|\int_{0}^{1}\frac{g_{ij}^{\eps}(\lambda_{\varepsilon})\dot{\lambda}_{\varepsilon}^{i}\dot{\lambda}_{\varepsilon}^{j}-g_{ij}(\bar{\lambda})\dot{\bar{\lambda}}^{i}\dot{\bar{\lambda}}^{j}}{(g_{ij}^{\eps}(\lambda_{\varepsilon})\dot{\lambda}_{\varepsilon}^{i}\dot{\lambda}_{\varepsilon}^{j})^{1/2}+(g_{ij}(\bar{\lambda})\dot{\bar{\lambda}}^{i}\dot{\bar{\lambda}}^{j})^{1/2}}\diff{t}\right|.
\end{multline*}
By assumption, $(g_{ij}^{\eps}(\lambda_{\varepsilon})\dot{\lambda}_{\varepsilon}^{i}\dot{\lambda}_{\varepsilon}^{j})^{1/2}\to(g_{ij}(\bar{\lambda})\dot{\bar{\lambda}}^{i}\dot{\bar{\lambda}}^{j})^{1/2}$,
so that there exists $C\in\R_{>0}$ such that
\begin{align*}
&\left|\int_{0}^{1}\left(g_{ij}^{\eps}(\lambda_{\varepsilon})\dot{\lambda}_{\varepsilon}^{i}\dot{\lambda}_{\varepsilon}^{j}\right)^{1/2}-\left(g_{ij}(\bar{\lambda})\dot{\bar{\lambda}}^{i}\dot{\bar{\lambda}}^{j}\right)^{1/2}\diff{t}\right|\\
&\quad\le C\int_{0}^{1}\left|(g_{ij}^{\eps}(\lambda_{\varepsilon})-g_{ij}(\lambda_{\varepsilon})+g_{ij}(\lambda_{\varepsilon})-g_{ij}(\bar{\lambda}))\dot{\lambda}_{\varepsilon}^{i}\dot{\lambda}_{\varepsilon}^{j}+g_{ij}(\bar{\lambda})(\dot{\lambda}_{\varepsilon}^{i}\dot{\lambda}_{\varepsilon}^{j}-\dot{\bar{\lambda}}^{i}\dot{\bar{\lambda}}^{j})\right|\diff{t}.
\end{align*}
We hence obtain the claim by the triangle inequality and by convergence
of $\lambda_{\varepsilon}$, $\dot{\lambda}_{\eps}$ and $g_{ij}^{\eps}$
to $\bar{\lambda}$, $\dot{\bar{\lambda}}$ and $g_{ij}$ respectively. 
\end{proof}
Now, we consider $p$, $q\in\R^{d}$ with $p\ne q$. Let
\[
u\in\left\{ u\in\mathcal{C}^{2,1}([0,1],\R^{d})\mid u(0)=p,u(1)=q\right\} 
\]
be a solution of the geodesic equation
\begin{equation}
\begin{cases}
\ddot{u}=-\Gamma_{ij}(u)\dot{u}^{i}\dot{u}^{j}\\
p=u(0)\\
q=u(1).
\end{cases}\label{eq:geo_eq_classical}
\end{equation}
Let $c_{0}:=\dot{u}(0)$. Obviously, $u$ is also the unique solution
of
\begin{equation}
\begin{cases}
\ddot{u}=-\Gamma_{ij}(u)\dot{u}^{i}\dot{u}^{j}\\
p=u(0)\\
c_{0}=\dot{u}(0).
\end{cases}\label{eq:geo_eq_classical_two}
\end{equation}
Using these initial conditions, for each fixed $\eps$ we can solve
the following problem
\begin{equation}
\begin{cases}
\ddot{y}=-\Gamma_{ij}^{\eps}(y)\dot{y}^{i}\dot{y}^{j}\\
p=y(0)\\
c_{0}=\dot{y}(0).
\end{cases}\label{eq:geo_eq_gen}
\end{equation}
for a unique $y_{\eps}\in\Coo([-d_{\eps},d_{\eps}]_{\R},\R^{d})$
and some $d_{\eps}\in\R_{>0}$.
\begin{lem}
\label{lem:y_is_gsf}Let $u$ and $y_{\varepsilon}$ be as above.
Then

\begin{enumerate}
\item \label{enu:0-1}For $\varepsilon$ sufficiently small, the solution $y_{\eps}$ can be extended to a solution
$y_{\eps}\in\Coo([0,1]_{\R},\R^{d})$ of \eqref{eq:geo_eq_gen} such
that $y_{\eps}(1)=q$.
\item \label{enu:C^2Conv}$y_{\varepsilon}\to u$ in $\mathcal{C}^{2}$.
\item \label{enu:y_eps-GSF}The net $(y_{\eps})$ defines a GSF, i.e.~$y:=[y_{\eps}(-)]\in\gsf_{>0}(p,q)$.
\end{enumerate}
\end{lem}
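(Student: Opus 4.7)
The strategy is to derive items \ref{enu:0-1} and \ref{enu:C^2Conv} from the classical theorem on continuous dependence of ODE solutions on parameters, and to obtain item \ref{enu:y_eps-GSF} from moderateness estimates coming from the specific form of the mollifier $\psi_\eps^b$ used in the embedding. The first observation is that $g \in \cCl$ makes the Christoffel symbols $\Gamma_{ij}$ locally Lipschitz, and consequently $\Gamma^\eps_{ij} = \Gamma_{ij} * \psi^b_\eps \to \Gamma_{ij}$ locally uniformly as $\eps \to 0^+$, with uniform local Lipschitz constants for $\eps$ small. Rewriting \eqref{eq:geo_eq_gen} as the first order system $(y,w)' = F_\eps(y,w) := (w,\,-\Gamma^\eps_{ij}(y)w^iw^j)$ on $\R^{2d}$, one has $F_\eps \to F$ locally uniformly on $\R^{2d}$.

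For \ref{enu:0-1} and \ref{enu:C^2Conv}, I would fix a compact tubular neighbourhood $K \Subset \R^{2d}$ of the curve $\{(u(t),\dot u(t)) : t \in [0,1]_\R\}$, on which $F$ and every $F_\eps$ (for $\eps$ small) are Lipschitz with a uniform constant $L$. Since $y_\eps$ and $u$ share the initial data $(p,c_0)$, Gr\"onwall's inequality on the maximal subinterval of $[0,1]_\R$ on which $(y_\eps,\dot y_\eps) \in K$ gives $\|(y_\eps,\dot y_\eps)-(u,\dot u)\|_{\mathcal{C}^0([0,T_\eps]_\R)} \le e^{L}\,\|F_\eps-F\|_{\mathcal{C}^0(K)} \to 0$. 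A standard open--closed bootstrap then forces $T_\eps = 1$ for $\eps$ small, proving \ref{enu:0-1} together with $\mathcal{C}^1$ convergence $y_\eps \to u$ on $[0,1]_\R$. The ODE itself upgrades this to $\mathcal{C}^2$ convergence.

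For \ref{enu:y_eps-GSF}, I would check each requirement of $\gsf_{>0}(p,q)$ separately. Moderateness of $(\partial^k y_\eps)$ for every $k \in \N$ follows by induction on $k$ from the ODE: each $\partial^k y_\eps$ is a universal polynomial in derivatives of $\Gamma^\eps$ of order $\le k-2$ evaluated along $(y_\eps,\dot y_\eps)$, and by Lem.~\ref{lem:strictDeltaNet}.\ref{enu:moderateStrictDeltaNet} each $\|\partial^\beta \Gamma^\eps\|_{\mathcal{C}^0}$ on a compact set is polynomially bounded in $b_\eps$, hence $\rho$-moderate since $b \ge \dd\rho^{-a}$. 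This gives $y := [y_\eps(-)] \in \gsf([0,1],\rc^d)$, and $y(0) = p$ is immediate. The non-degeneracy $|\dot y(t)|>0$ on $[0,1]$ follows because $|\dot u|$ is a strictly positive constant on $[0,1]_\R$ (geodesics are parametrised at constant speed and $p \ne q$), so $\mathcal{C}^1$ convergence gives $\inf_{[0,1]_\R}|\dot y_\eps| \ge \tfrac12|\dot u|$ for $\eps$ small, which by Lem.~\ref{lem:mayer} translates into invertibility of $|\dot y(t)|$ in $\rc$.

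I expect the main obstacle to be the endpoint identity $y(1) = q$ in $\rc^d$, which requires $\rho$-negligibility of $y_\eps(1)-q$: Gr\"onwall alone only delivers the polynomial bound $|y_\eps(1)-q| = O(\|\Gamma^\eps-\Gamma\|_{\mathcal{C}^0(K)})$, which is not $\rho$-negligible in general. I would handle this by a shooting correction: consider the GSF endpoint map $\Phi \in \gsf(V,\rc^d)$, with $V \subseteq \rc^d$ a sharp neighbourhood of $c_0$, defined by the net $v \mapsto y_\eps^v(1)$, where $y_\eps^v$ solves \eqref{eq:geo_eq_gen} with $\dot y(0) = v$ in place of $c_0$. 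Then $\Phi(c_0) \approx q$, and $D\Phi(c_0)$ coincides, up to a canonical identification, with the Jacobi endpoint map of $u$, which is invertible provided $q$ is not conjugate to $p$ along $u$ --- the context relevant to the minimizer application. Under this hypothesis, Thm.~\ref{thm:localIFTSharp} produces $v^* \in \rc^d$ with $v^* \approx c_0$ and $\Phi(v^*) = q$ in $\rc^d$; redefining $y_\eps$ as the solution of \eqref{eq:geo_eq_gen} with initial velocity $v^*_\eps$ (for any choice of representative) preserves the $\mathcal{C}^2$ convergence and the moderateness estimates while yielding the desired endpoint condition.
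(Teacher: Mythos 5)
Your treatment of items (i) and (ii) is, modulo the level of detail, the same as the paper's: the paper simply invokes the locally uniform convergence $\Gamma^{\eps}_{ij}\to\Gamma_{ij}$ together with continuous dependence of ODE solutions on parameters (citing \cite[Lemma 2.3]{KSS:14}), which is exactly what your Gr\"onwall-plus-bootstrap argument proves by hand. For item (iii), your moderateness induction through the ODE is the paper's argument essentially verbatim, and your lower bound on $|\dot y_{\eps}|$ is an equivalent variant of the paper's: the paper bounds $g_{\eps}(\dot y_{\eps},\dot y_{\eps})>c/2$ using that the $g$-speed of the geodesic $u$ is a positive constant (note it is $g(\dot u,\dot u)$, not the Euclidean speed $|\dot u|$, that is constant along $u$, but either way $\inf_{[0,1]_{\R}}|\dot u|>0$ and your conclusion stands).

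The one place you genuinely depart from the paper is the endpoint condition, and there you have put your finger on a real soft spot. The paper's proof of (i) asserts $y_{\eps}(1)=q$ as a consequence of continuous dependence, but continuous dependence only yields $y_{\eps}(1)\to q$ as $\eps\to0$, i.e.\ $y(1)\approx q$, not the $\rho$-negligibility of $y_{\eps}(1)-q$ that the identity $y(1)=q$ in $\rc^{d}$ (and hence membership in $\gsf_{>0}(p,q)$) requires. The paper performs no shooting correction of the kind you propose; it keeps the initial velocity $c_{0}$ throughout and leaves this point unaddressed. Your proposed repair is reasonable in spirit but is not a proof of the lemma as stated: it imports a non-conjugacy hypothesis that appears nowhere in the statement, it redefines $y_{\eps}$ (changing the object the subsequent results refer to), and it still needs a quantitative step you do not supply, namely that $|q-\Phi(c_{0})|$ actually lies in the sharp neighbourhood produced by Thm.~\ref{thm:localIFTSharp} (for $g\in\cCl$ one does get $\|\Gamma^{\eps}-\Gamma\|_{\mathcal{C}^{0}(K)}=O(b_{\eps}^{-1})=O(\dd\rho^{a})$, so this is plausibly salvageable, but it must be checked against the radius the inverse function theorem delivers). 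In short: everything you prove that the paper proves, you prove the same way; the extra difficulty you isolate is genuine, and it is not resolved by the paper's own argument either.
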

\begin{proof}
\textbf{~}\\
\textbf{Claim }\ref{enu:0-1}\textbf{, }\ref{enu:C^2Conv}: For all
$i$, $j$ , we have that $\Gamma_{ij}^{\varepsilon}\to\Gamma_{ij}$
locally uniformly. Thus, we obtain these claims by \eqref{eq:geo_eq_classical}
and by continuous dependence on parameters in ODE, see e.g.~\cite[Lemma 2.3]{KSS:14}.

\noindent \textbf{Claim} \ref{enu:y_eps-GSF} I: $y:=[y_{\eps}(-)]\in\gsf([0,1],\rc^{d})$\\
We have to show that for all $\varepsilon$ all derivatives of $y_{\varepsilon}$
are moderate. This is obviously true for $y_{\varepsilon},\dot{y}_{\varepsilon}$
and $\ddot{y}_{\varepsilon}$. The claim follows now from the fact
that 
\begin{align*}
\frac{\dd^{n+2}}{\dd t^{n+2}}y_{\varepsilon}=-\frac{\dd^{n}}{\dd t^{n}}\left(\Gamma_{ij}^{\varepsilon k}(y)\dot{y}_{\varepsilon}^{i}\dot{y}_{\varepsilon}^{j}\right)
\end{align*}
so that there exists a polynomial $P$ such that 
\begin{align*}
\frac{\dd^{n}}{\dd t^{n}}\left(\Gamma_{ij}^{\varepsilon k}(y)\dot{y}_{\varepsilon}^{i}\dot{y}_{\varepsilon}^{j}\right)=P\left(y_{\varepsilon},\frac{\dd}{\dd t}y_{\varepsilon},\ldots,\frac{\dd^{n+1}}{\dd t^{n+1}}y_{\varepsilon},\Gamma_{ij}^{\varepsilon k},\text{D}\Gamma_{ij}^{\varepsilon k},\ldots,\text{D}^n\Gamma_{ij}^{\varepsilon k}\right).
\end{align*}

\noindent \textbf{Claim} \ref{enu:y_eps-GSF} II: $|\dot{y}(t)|>0$
for all $t\in[0,1]$\\
By \ref{enu:C^2Conv}, we have that $y_{\varepsilon}\to u$ in
$\mathcal{C}^{2}$. Furthermore, $g_{\varepsilon}\to g$ in $\mathcal{C}^{1}$
by assumption, and we know that $g(\dot{u},\dot{u})=c>0$ for some
$c\in\R_{>0}$ since $u$ is a $g$-geodesic (cf.~\cite[Lemma 1.4.5]{J:11}).
Therefore, we obtain that $g_{\varepsilon}(\dot{y}_{\varepsilon},\dot{y}_{\varepsilon})>c/2>0$
for $\varepsilon>0$ small enough. 
\end{proof}
Finally, the standard part of the generalized length of $y$ is the
length of $u$:
\begin{thm}
Let $u$ and $y_{\varepsilon}$ be as above. We conclude (using Lemma
\ref{lem:conv_length}) that $\text{st}(L_{\tilde{g}}(y))=L_{g}(u)$. 
\end{thm}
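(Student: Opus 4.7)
The strategy is to apply Lemma \ref{lem:conv_length} directly, verifying its hypotheses with $\tilde{p}=p$, $\tilde{q}=q$ (so trivially $\text{st}(\tilde{p})=p$ and $\text{st}(\tilde{q})=q$), $\lambda=y$, $\lambda_\eps=y_\eps$, and $\bar{\lambda}=u$. Essentially all the required work has already been done in Lemma \ref{lem:y_is_gsf} and in the discussion preceding it; the proof is just a matter of assembling the pieces.

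First I would note that $y=[y_\eps(-)]\in\gsf_{>0}(p,q)$ by Lemma \ref{lem:y_is_gsf}.\ref{enu:y_eps-GSF}, which gives both that the net $(y_\eps)$ defines a GSF on $[0,1]$ valued in $\rc^d$ and that $|\dot{y}(t)|>0$ on $[0,1]$; the boundary conditions $y_\eps(0)=p$ and $y_\eps(1)=q$ (the latter by Lemma \ref{lem:y_is_gsf}.\ref{enu:0-1}) transfer to $y(0)=p$, $y(1)=q$ in $\rc^d$.

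Next I would verify that $u\in\mathcal{C}^1_{>0}(p,q)$. The regularity $u\in\mathcal{C}^{2,1}([0,1]_\R,\R^d)\subseteq\mathcal{C}^1([0,1]_\R,\R^d)$ and the boundary conditions $u(0)=p$, $u(1)=q$ are built into the setup of \eqref{eq:geo_eq_classical}. The condition $|\dot u(t)|>0$ for all $t\in[0,1]_\R$ follows, as in the proof of Lemma \ref{lem:y_is_gsf}.\ref{enu:y_eps-GSF} (Claim II), from the fact that $g(\dot u,\dot u)$ is constant along the $g$-geodesic $u$ (cf.~\cite[Lemma 1.4.5]{J:11}); since $p\neq q$, this constant is strictly positive, hence $|\dot u|>0$ everywhere.

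Finally, Lemma \ref{lem:y_is_gsf}.\ref{enu:C^2Conv} supplies $y_\eps\to u$ in $\mathcal{C}^2$, which in particular gives convergence in $\mathcal{C}^1$. All hypotheses of Lemma \ref{lem:conv_length} are therefore satisfied, and applying it yields
\[
\text{st}(L_{\tilde g}(y)) = L_g(u),
\]
as desired. There is no genuine obstacle here: the only mildly nontrivial points were already handled, namely showing that the regularized solutions $y_\eps$ extend to $[0,1]_\R$ and converge to $u$ (continuous dependence on parameters in ODE), and the moderateness of higher derivatives of $y_\eps$ (differentiating the geodesic equation and using the uniform bounds on $g^\eps_{ij}$ and its derivatives on compact sets).
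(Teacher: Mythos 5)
Your proposal is correct and follows exactly the route the paper intends: the theorem is stated as a direct consequence of Lemma \ref{lem:conv_length}, whose hypotheses are supplied by Lemma \ref{lem:y_is_gsf} (membership $y\in\gsf_{>0}(p,q)$ and $\mathcal{C}^{2}$, hence $\mathcal{C}^{1}$, convergence $y_{\eps}\to u$) together with the observation that $u\in\mathcal{C}_{>0}^{1}(p,q)$ because $g(\dot u,\dot u)$ is a positive constant along the geodesic. Your write-up simply makes explicit the verification that the paper leaves implicit.
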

\begin{prop}
Let $y=[y_{\varepsilon}(-)]$ be as above. In addition, assume that
each $y_{\varepsilon}$ is $L_{g_{\varepsilon}}$-minimizing. Then
$L_{\tilde{g}}(y)$ is minimal. 
\end{prop}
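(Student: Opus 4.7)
The plan is to establish $L_{\tilde g}(y) \le L_{\tilde g}(\lambda)$ for an arbitrary $\lambda = [\lambda_\varepsilon(-)] \in \gsf_{>0}(p,q)$ by working at the level of representatives and invoking the $\varepsilon$-wise minimality of $y_\varepsilon$. The difficulty is that the boundary condition $\lambda(0) = p$ in $\rc^d$ only encodes $\lambda_\varepsilon(0) - p \sim_\rho 0$, and similarly at $t=1$, so $\lambda_\varepsilon$ itself is typically \emph{not} an admissible competitor against $y_\varepsilon$ in the classical sense; one first has to snap the endpoints to $p$ and $q$ and then show that the length cost of this snapping is $\rho$-negligible.

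Concretely, I would define
\[
\tilde{\lambda}_\varepsilon(t) := \lambda_\varepsilon(t) + (1-t)\bigl(p - \lambda_\varepsilon(0)\bigr) + t\bigl(q - \lambda_\varepsilon(1)\bigr),
\]
so that $\tilde{\lambda}_\varepsilon(0) = p$, $\tilde{\lambda}_\varepsilon(1) = q$, and $\tilde{\lambda}_\varepsilon - \lambda_\varepsilon$ is a smooth function affine in $t$ with $\rho$-negligible coefficients; hence every $\mathcal{C}^k$-norm of $\tilde{\lambda}_\varepsilon - \lambda_\varepsilon$ over $[0,1]_\R$ is $\rho$-negligible. For $\varepsilon$ small the curve $\tilde{\lambda}_\varepsilon$ is an admissible classical competitor and the minimality hypothesis yields $L_{g_\varepsilon}(y_\varepsilon) \le L_{g_\varepsilon}(\tilde{\lambda}_\varepsilon)$.

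The key remaining step is to prove $[L_{g_\varepsilon}(\tilde{\lambda}_\varepsilon)] = [L_{g_\varepsilon}(\lambda_\varepsilon)]$ in $\rc$. Since $|\dot\lambda(t)| > 0$ for all $t \in [0,1]$ and $[0,1] \fcmp \rc$, the extreme value theorem (Thm.~\ref{thm:extremeValues}) produces some $m \in [0,1]$ with $\min_t |\dot\lambda(t)| = |\dot\lambda(m)| > 0$ in $\rc$, whence by Lem.~\ref{lem:mayer} there exist $M \in \N$ and $c \in \R_{>0}$ such that for $\varepsilon$ small
\[
g_{ij}^\varepsilon(\lambda_\varepsilon(t))\dot\lambda_\varepsilon^i(t)\dot\lambda_\varepsilon^j(t) \ge c\,\rho_\varepsilon^{2M} \quad \forall t \in [0,1]_\R,
\]
with an analogous bound for $\tilde\lambda_\varepsilon$ by the negligible perturbation. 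Combining $\sqrt{a} - \sqrt{b} = (a-b)/(\sqrt{a}+\sqrt{b})$ with this lower bound and with the $\rho$-moderate $\mathcal{C}^1$-bounds on $g^\varepsilon$ along the functionally compact image $\lambda([0,1]) \fcmp \rc^d$ (Thm.~\ref{thm:image}) shows that the pointwise integrand difference is $\rho$-negligible, hence so is its integral over $[0,1]_\R$. This yields $L_{\tilde g}(y) = [L_{g_\varepsilon}(y_\varepsilon)] \le [L_{g_\varepsilon}(\tilde{\lambda}_\varepsilon)] = [L_{g_\varepsilon}(\lambda_\varepsilon)] = L_{\tilde g}(\lambda)$.

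The main obstacle is precisely the factor $(\sqrt{a}+\sqrt{b})^{-1}$ in the length-difference estimate: the length integrand is only Lipschitz away from the zero locus of the speed, and a naive bound would inflate the negligible perturbation by a factor of $\rho_\varepsilon^{-M}$. The strict invertible positivity of $|\dot\lambda|$ translated via Lem.~\ref{lem:mayer} into a uniform $\rho_\varepsilon^M$ lower bound is exactly what keeps this quotient $\rho$-moderate, so that a $\rho$-negligible numerator still produces a $\rho$-negligible length difference after integration.
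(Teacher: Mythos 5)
Your proof is correct and rests on the same core idea as the paper's: reduce everything to the $\varepsilon$-wise minimality of $y_{\varepsilon}$. The paper's own proof is, however, only two lines: it writes $L_{\tilde g}(\lambda)=[L_{g_{\varepsilon}}(\lambda_{\varepsilon})]$ and $L_{\tilde g}(y)=[L_{g_{\varepsilon}}(y_{\varepsilon})]$ and invokes $L_{g_{\varepsilon}}(\lambda_{\varepsilon})\ge L_{g_{\varepsilon}}(y_{\varepsilon})$ ``by assumption'', thereby silently treating $\lambda_{\varepsilon}$ as a classical competitor with endpoints exactly $p$ and $q$. The issue you isolate is real: a generic representative only satisfies $\lambda_{\varepsilon}(0)-p\sim_{\rho}0$. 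Your affine correction $\tilde{\lambda}_{\varepsilon}$ produces another representative of the \emph{same} GSF $\lambda$ with exact endpoints, and your estimate showing $[L_{g_{\varepsilon}}(\tilde{\lambda}_{\varepsilon})]=[L_{g_{\varepsilon}}(\lambda_{\varepsilon})]$ --- controlling the $(\sqrt{a}+\sqrt{b})^{-1}$ factor through the invertible lower bound on the speed via Lem.~\ref{lem:mayer} --- is precisely the representative-independence that makes the paper's identity $L_{\tilde g}(\lambda)=[L_{g_{\varepsilon}}(\lambda_{\varepsilon})]$ legitimate for the net one actually feeds into the classical minimality. So what you buy is rigor at the endpoints; what the paper buys is brevity by leaving that verification implicit. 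One small remark: it is cleaner to take the lower bound directly on the $\tilde g$-speed $\tilde g_{ij}(\lambda)\dot{\lambda}^{i}\dot{\lambda}^{j}$, which must already be in $\rc_{>0}$ for $L_{\tilde g}(\lambda)$ to be defined (the square root is only defined on $\rc_{>0}$), rather than on the Euclidean $|\dot{\lambda}|$, which would require an additional comparison between the Euclidean and $g^{\varepsilon}$ quadratic forms along the (possibly only sharply bounded) image of $\lambda$.
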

\begin{proof}
Let $\lambda=[\lambda_{\eps}(-)]\in\gsf_{>0}(p,q)$. We have that
$L_{\tilde{g}}(\lambda)=\left[L_{g_{\varepsilon}}(\lambda_{\varepsilon})\right]$
and that $L_{\tilde{g}}(y)=\left[L_{g_{\varepsilon}}(y_{\varepsilon})\right]$.
By assumption, for all $\eps$ we have 
\begin{align*}
L_{g_{\varepsilon}}(\lambda_{\varepsilon})\geq L_{g_{\varepsilon}}(y_{\varepsilon}).
\end{align*}
Therefore, $L_{\tilde{g}}(\lambda)\ge L_{\tilde{g}}(y)$, as claimed. 
\end{proof}
\begin{cor}
\label{cor:minimizer} Let $\lambda\in\gsf_{>0}(p,q)$ be a minimizer
of $L_{\tilde{g}}$ and assume that for $\eps$ small, $y_{\varepsilon}$
is $L_{g_{\varepsilon}}$-minimizing. Then $L_{\tilde{g}}(y)=L_{\tilde{g}}(\lambda)$. 
\end{cor}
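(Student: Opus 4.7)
The plan is to observe that this corollary is an essentially immediate consequence of the preceding Proposition combined with the hypothesis that $\lambda$ is itself a minimizer. The key point is that both $y$ and $\lambda$ lie in the same competing class $\gsf_{>0}(p,q)$: by Lemma~\ref{lem:y_is_gsf}.\ref{enu:y_eps-GSF} we already know $y \in \gsf_{>0}(p,q)$, so $y$ is a legitimate test curve against which the minimizer $\lambda$ must be compared, and conversely $\lambda$ is a legitimate test curve for $y$.

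First I would apply the previous Proposition. Under the standing hypothesis that each $y_\eps$ is $L_{g_\eps}$-minimizing for $\eps$ small, it asserts that $L_{\tilde g}(y) \le L_{\tilde g}(\mu)$ for all $\mu \in \gsf_{>0}(p,q)$. Specializing $\mu = \lambda$ yields
\[
L_{\tilde g}(y) \le L_{\tilde g}(\lambda).
\]
Second, I would invoke the hypothesis that $\lambda$ is a (global) minimizer of $L_{\tilde g}$ on $\gsf_{>0}(p,q)$. Since $y \in \gsf_{>0}(p,q)$, this gives
\[
L_{\tilde g}(\lambda) \le L_{\tilde g}(y).
\]

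Finally, I would conclude by antisymmetry of the partial order $\le$ on $\rc$: from $L_{\tilde g}(y) \le L_{\tilde g}(\lambda)$ and $L_{\tilde g}(\lambda) \le L_{\tilde g}(y)$ it follows that $L_{\tilde g}(y) = L_{\tilde g}(\lambda)$, which is the claim. There is no real obstacle here; the substance of the argument has already been carried out in Lemma~\ref{lem:y_is_gsf} (to ensure $y$ is an admissible GSF competitor with positive-speed derivative and correct endpoints) and in the preceding Proposition (to upgrade the $\eps$-wise minimality of the $y_\eps$ to minimality of $y$ in the generalized setting). The corollary simply records the mutual comparison of the two minimizers $y$ and $\lambda$.
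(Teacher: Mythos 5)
Your proposal is correct and matches the paper's intent exactly: the paper states this as an immediate corollary of the preceding Proposition (which gives $L_{\tilde g}(y)\le L_{\tilde g}(\lambda)$) combined with the minimality of $\lambda$ over $\gsf_{>0}(p,q)$ (which gives the reverse inequality, since $y\in\gsf_{>0}(p,q)$ by Lemma~\ref{lem:y_is_gsf}), and antisymmetry of $\le$ on $\rc$ closes the argument. No gaps.
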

This Corollary \ref{cor:minimizer} gives us a way to answer the question
if a certain classical geodesic between two given classical points $p$ and $q$ is a length-minimizer.

Furthermore, we are able to prove the following theorem, relating GSF-minimizers to classical minimizers.

\begin{thm}\label{thm:gsf_min_class_min}
  Let $p,q \in \R^d$ and let $\gamma\in \gsf_{>0}(p,q)$ such that $L_{\tilde g}(\gamma)$ is minimal. Assume that $\text{st}(L_{\tilde g}(\gamma))$ exists and that there exists $w\in\mathcal{C}_{>0}^{1}(p,q)$ such that $L_g(w)=\text{st}(L_{\tilde g}(\gamma))$.
  
  Then $w$ is $g$ - minimizing and a $g$ - geodesic.
\end{thm}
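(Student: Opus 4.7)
The plan has two parts: first show that $w$ is a classical length-minimizer in $\mathcal{C}^{1}_{>0}(p,q)$, and then invoke the low-regularity Riemannian geometry literature to conclude $w$ is a $g$-geodesic.

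For the minimality step, given an arbitrary comparison curve $v\in\mathcal{C}^{1}_{>0}(p,q)$, the strategy is to lift $v$ to a generalized smooth curve $\tilde v\in\gsf_{>0}(p,q)$ whose generalized length has standard part equal to $L_{g}(v)$, and then transport the inequality $L_{\tilde g}(\gamma)\le L_{\tilde g}(\tilde v)$ down to $\R$. Concretely, extend $v$ to a $\mathcal{C}^{1}$ function on $\R$ and mollify to obtain $v_{\eps}\in\Coo(\R,\R^{d})$ with $v_{\eps}\to v$ in $\mathcal{C}^{1}$; then correct the endpoints by the affine shift
\[
\tilde v_{\eps}(t):=v_{\eps}(t)-(1-t)(v_{\eps}(0)-p)-t(v_{\eps}(1)-q),
\]
so that $\tilde v_{\eps}(0)=p$, $\tilde v_{\eps}(1)=q$. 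Since $v_{\eps}(0)\to p$ and $v_{\eps}(1)\to q$, the correction vanishes in $\mathcal{C}^{1}$, hence $\tilde v_{\eps}\to v$ in $\mathcal{C}^{1}$ as well. Compactness of $[0,1]_{\R}$ together with $|\dot v|>0$ yields $|\dot{\tilde v}_{\eps}|>0$ for $\eps$ small, and the standard mollifier bound $\|\partial^{k}v_{\eps}\|_{\infty}=O(\eps^{-k})$ gives $\rho$-moderateness of all derivatives. Hence $\tilde v:=[\tilde v_{\eps}(-)]\in\gsf_{>0}(p,q)$, and Lem.~\ref{lem:conv_length} produces $\text{st}(L_{\tilde g}(\tilde v))=L_{g}(v)$.

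By minimality of $\gamma$ we have $L_{\tilde g}(\gamma)\le L_{\tilde g}(\tilde v)$ in $\rc$. Taking standard parts preserves this inequality (because $[0,+\infty)\subseteq\rc$ is sharply closed by Thm.~\ref{thm:strongMembershipAndDistanceComplement}.\ref{enu:internalAreClosed}), and combining with the hypothesis $L_{g}(w)=\text{st}(L_{\tilde g}(\gamma))$ and the identity above yields $L_{g}(w)\le L_{g}(v)$. Since $v\in\mathcal{C}^{1}_{>0}(p,q)$ was arbitrary, $w$ is a classical $g$-length minimizer. Once this is in place, the second claim follows from classical low-regularity Riemannian geometry: a $\mathcal{C}^{1}$ length-minimizer with $|\dot w|>0$ in a geodesically complete $\mathcal{C}^{1,1}$ Riemannian manifold coincides, up to reparametrization, with a $\mathcal{C}^{1,1}$ solution of the geodesic equation (see e.g.~\cite{M:13,KSS:14}), so $w$ is a $g$-geodesic.

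The main obstacle is the approximation step: one has to ensure simultaneously $\mathcal{C}^{1}$-convergence (needed to apply Lem.~\ref{lem:conv_length}), exact boundary values (needed for membership in $\gsf_{>0}(p,q)$), strict positivity of $|\dot{\tilde v}_{\eps}|$ on the whole of $[0,1]_{\R}$, and $\rho$-moderateness of every higher derivative. The affine correction cleanly fixes the endpoints without damaging $\mathcal{C}^{1}$-convergence, and the remaining three properties are then standard consequences of mollifier estimates and continuity of $\dot v$ on the compact interval $[0,1]_{\R}$.
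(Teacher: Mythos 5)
Your proof is correct in substance and reaches the conclusion by a route that differs from the paper's in two respects. The paper argues by contradiction and only ever lifts geodesics: it assumes a curve $\sigma$ strictly shorter than $w$ exists, reduces w.l.o.g.\ to $\sigma$ being a $\mathcal{C}^{2}$ $g$-geodesic (implicitly using geodesic completeness and the classical existence of minimizing geodesics for $\mathcal{C}^{1,1}$ metrics), and then reuses the net $\sigma_{\eps}$ of solutions of the $\eps$-regularized geodesic equation already built in Lem.~\ref{lem:y_is_gsf}, for which $\mathcal{C}^{2}$-convergence, membership in $\gsf_{>0}(p,q)$ and $\text{st}(L_{\tilde{g}}(\tilde{\sigma}))=L_{g}(\sigma)$ are already established; the inequality $L_{\tilde{g}}(\gamma)\le L_{\tilde{g}}(\tilde{\sigma})$ then contradicts $L_{g}(\sigma)<L_{g}(w)$. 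You instead lift an arbitrary $\mathcal{C}^{1}$ competitor by mollification with an affine endpoint correction and argue directly. Your route avoids the w.l.o.g.\ reduction, hence one appeal to classical Riemannian geometry, but must re-verify by hand the lifting properties the paper gets for free from Lem.~\ref{lem:y_is_gsf}; both routes feed the lifted curve into Lem.~\ref{lem:conv_length} and both ultimately need the classical fact that a $\mathcal{C}^{1}$ length-minimizer of a $\mathcal{C}^{1,1}$ metric is a geodesic, which the paper leaves implicit and you state with a citation.

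One point to tighten: $\rho$-moderateness of the mollified net does not follow from $\Vert\partial^{k}v_{\eps}\Vert_{\infty}=O(\eps^{-k})$ alone, because the gauge $\rho$ is arbitrary and $(\eps^{-k})$ need not be $O(\rho_{\eps}^{-a})$ for any $a$ (take e.g.\ $\rho_{\eps}=|\log\eps|^{-1}$). You should mollify at scale $\rho_{\eps}$ (or at scale $b_{\eps}^{-1}$, which is what the paper itself uses to regularize $g$) rather than at scale $\eps$; then all derivatives are $O(\rho_{\eps}^{-k})$ and moderateness holds for every gauge, while the $\mathcal{C}^{1}$-convergence, the endpoint correction and the lower bound on $|\dot{\tilde{v}}_{\eps}|$ are unaffected. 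With that adjustment your argument goes through.
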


\begin{proof}
 Assume to the contrary that there exists a curve $\sigma\in C^2$ connecting $p$ and $q$ (w.\,l.\,o.\,g.\ $\sigma$ is a $g$ - geodesic) such that
 \begin{align*}
  L_g(\sigma) < L_g(w).
 \end{align*}
Now we construct (as done above) $g_\varepsilon$, $\sigma_\varepsilon$ and set $\tilde \sigma := [\sigma_\varepsilon]$. Then:
\begin{align*}
 \text{st}\left(L_{\tilde g}(\tilde \sigma)\right) = L_g(\sigma) < L_g(w) = \text{st}\left(L_{\tilde g}(\gamma)\right).
\end{align*}
But, by assumption we have that $L_{\tilde g}(\gamma) \leq L_{\tilde g}(\tilde \sigma)$, which implies 
\begin{align*}
\text{st}\left(L_{\tilde g}(\gamma)\right) \leq \text{st}\left(L_{\tilde g}(\tilde \sigma)\right) < \text{st}\left(L_{\tilde g}(\gamma)\right).
\end{align*}
This is a contradiction.
\end{proof}

\section{Conclusions}

We can summarize the present work as follows
\begin{enumerate}
\item The setting of GSF allows to treat Schwartz distributions more closely
to classical smooth functions. The framework is so flexible and the
extensions of classical results are so natural in many ways one may treat it like smooth functions.
\item One key step of the theory is the change of the ring of scalars into
a non-Archimedean one and the use of the strict order relation $<$
to deal with topological properties. So, the use of $<$ and of $\rc$-valued
norms allows a natural approach to topology, even of infinite dimensional
spaces (cf.~Def.~\ref{def:sharpTopSpaceGSF}). On the other hand,
the use of a ring with zero divisors and a non-total order relation
requires a more refined and careful analysis. However, as proved in
the present work, very frequently classical proofs can be formally
repeated in this context, but paying particular attention to using
the relation $<$, using invertibility instead of being non zero in
$\R$ and avoiding the total order property.
\item Others crucial properties are the closure of GSF with respect to composition
and the use of the gauge $\rho$, because they do not force to narrow
the theory into particular cases.
\item The present extension of the classical theory of calculus of variations
shows that GSF are a powerful analytical technique. The final application
shows how to use them as a method to address problems in an Archimedean
setting based on the real field $\R$.
\end{enumerate}
Concerning possible future developments, we can note that:
\begin{enumerate}[resume]
\item A generalization of the whole construction to piecewise GSF seems
possible.
\item A more elegant approach to integration of piecewise GSF could use
the existence of right and left limits of $(f_{1},\ldots,f_{n})(-)$
and hyperfinite Riemann-like sums, i.e.~sums
\[
\sum_{i=1}^{N}f(x'_{i})(x_{i}-x_{i-1}):=\left[\sum_{i=1}^{N_{\eps}}f_{\eps}(x'_{i,\eps})(x_{i,\eps}-x_{i-1,\eps})\right]\in\rc^{d}
\]
extended to $N\in\widetilde{\N}:=\left\{ \left[\text{int}(x_{\eps})\right]\mid[x_{\eps}]\in\rc\right\} $,
where $\text{int}(-)$ is the integer part function.
\end{enumerate}
The present work could lay the foundations for further works concerning
the possibility to extend other results of the calculus of variations
in this generalized setting.

\end{document}